\title{Products of straight spaces}
\date{}
    \author{Alessandro Berarducci \and Dikran Dikranjan\thanks{This author
acknowledges the financial aid received from MCYT, MTM2006-02036 and
FEDER funds.
\endgraf AMS classification numbers:  Primary 54D05;
    Secondary 54C30. \endgraf
   Key words and phrases: {\it UC space, locally connected space, uniformly locally connected space, straight space, products}}   \and Jan Pelant}
\newtheorem{theorem}{Theorem}[section]
\newtheorem{corollary}[theorem]{Corollary}
\newtheorem{lemma}[theorem]{Lemma}
\newtheorem{proposition}[theorem]{Proposition}
\newtheorem{problem}[theorem]{Problem}
\newtheorem{em-example}[theorem]{Example}
\newtheorem{em-def}[theorem]{Definition}
\newtheorem{em-remark}[theorem]{Remark}
\newtheorem{em-question}[theorem]{Question}
\newenvironment{example}{\begin{em-example} \em }{ \end{em-example}}
\newenvironment{definition}{\begin{em-def} \em  }{ \end{em-def}}
\newenvironment{remark}{\begin{em-remark} \em }{\end{em-remark}}
\newenvironment{question}{\begin{em-question}
\em }{\end{em-question}}
\newenvironment{pf}{{\em Proof of claim.}}{{\em End of Claim. \medskip}}
\newtheorem{Claim}[theorem]{Claim}
\newcommand{\R}{{\mathbb R}}
\newcommand{\N}{{\mathbb N}}
\newcommand{\Q}{{\mathbb Q}}
\newcommand{\ov}{\overline}
\newcommand{\nin}{\not\in}
\newcommand{\eps}{\varepsilon}
\newcommand{\nbd}{neighbourhood }
\newcommand{\diam}{\mbox{diam } }
\newcommand{\ttt}{{tight}}
\newcommand{\NB}{}
\newcommand{\AB}{}
\begin{document}
\maketitle

\begin{abstract} A metric space $X$ is straight if for each finite
cover of $X$ by closed sets, and for each real valued function $f$ on $X$, if
$f$ is uniformly continuous on each set of the cover, then $f$ is uniformly
continuous on the whole of $X$. A locally connected space is straight iff
it is uniformly locally connected (ULC). It is easily seen that 
 ULC spaces are stable under finite products. On the
other hand the product of two straight spaces is not necessarily straight.  We
prove that the product $X\times Y$ of two metric spaces is straight if and only
if both $X$ and $Y$ are straight and one of the following conditions holds:
\begin{itemize}
\item[(a)] both $X$ and $Y$ are precompact;
\item[(b)] both $X$ and $Y$ are locally connected;
\item[(c)] one of the spaces is both precompact and locally connected.
\end{itemize}
   \par In particular, when $X$ satisfies (c), the product $X\times Z$ is straight {\em for every} straight space $Z$. 
\par Finally, we characterize when  infinite products of metric  spaces are ULC and  we completely solve the problem of straightness of infinite products of ULC spaces.
\end{abstract}

\section{Introduction}
All spaces in the sequel are metric. Given a space $X$, $C(X)$ denotes the set
of all continuous functions $f\colon X \to \R$. The following notion, already
studied in \cite{BDP3,BDP+}, will be the object of investigation of this paper.

\begin{definition}
A space $X$ is called {\bf straight} if whenever $X$ is the union of finitely
many closed sets, then $f \in C(X)$ is uniformly continuous (briefly, u.c.) if
and only if its restriction to each of the closed sets is u.c.
\end{definition}

\begin{example} Every compact space is obviously straight. For the same reason every UC-space is straight (a space $X$ is UC if every $f\in
    C(X)$ is uniformly continuous \cite{A}).
\end{example}

More examples are obtained from the following stronger property

\begin{definition}\label{SCS_uniformly locally connected} ([HY, 3-2]) A metric space $X$ is {\bf uniformly locally connected} (ULC), if for
    every $\varepsilon > 0$ there is $\delta > 0$ such that any two points at distance $<\delta$ lie in a connected set of diameter
    $<\varepsilon$.
\end{definition}

It is easy to see that a ULC space is locally connected, namely each point has a basis of connected neighbourhoods. Therefore a compact space need not be ULC. 

\begin{theorem}\label{Th_loc_con_vs_ULC}
(\cite[Theorem 3.9]{BDP3}) A locally connected metric space is straight if and only if it is uniformly locally connected.
\end{theorem}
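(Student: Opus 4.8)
\textit{Plan.} We prove the two implications separately; local connectedness is only needed for the forward one.

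\medskip
\noindent\textbf{ULC $\Rightarrow$ straight (no local connectedness needed).} Let $X=F_1\cup\cdots\cup F_k$ with all $F_i$ closed and let $f\in C(X)$ be u.c. on each $F_i$. Fix $\eps>0$; choose $\eta>0$ so that $u,v\in F_i$ and $d(u,v)<\eta$ force $|f(u)-f(v)|<\eps/k$ for every $i$, and then (by ULC) choose $\delta>0$ so that any two points at distance $<\delta$ lie in a connected $C\subseteq X$ of diameter $<\eta$. The core is the elementary \emph{chain lemma}: if a connected space $C$ is the union of finitely many closed sets $G_1,\dots,G_k$, then any $p,q\in C$ are joined by a sequence $p=z_0,z_1,\dots,z_m=q$ with $m\le k$ and each $\{z_{j-1},z_j\}$ contained in some $G_i$. (Put $i\sim j$ on $\{1,\dots,k\}$ when $G_i\cap G_j\neq\emptyset$ and pass to the generated equivalence relation: the union of each class is closed, distinct classes have disjoint unions, and connectedness of $C$ forces a single class, which yields the chain.) Applying this to $C$ with $G_i:=C\cap F_i$ and telescoping, $|f(p)-f(q)|\le\sum_j|f(z_{j-1})-f(z_j)|<k\cdot(\eps/k)=\eps$ whenever $d(p,q)<\delta$, so $f$ is u.c. on $X$. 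Since ULC spaces are locally connected, this also proves the ``if'' part of the theorem.

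\medskip
\noindent\textbf{straight $\Rightarrow$ ULC, for $X$ locally connected.} We prove the contrapositive: if $X$ is locally connected but not ULC, then $X$ is not straight. From the failure of ULC fix $\eps_0>0$ and points $x_n,y_n$ with $d(x_n,y_n)\to0$ that lie in no common connected subset of diameter $<\eps_0$. No subsequence of $(x_n)$ converges in $X$: a limit $p$ would have a connected neighbourhood of diameter $<\eps_0$ eventually containing both $x_n$ and $y_n$. Hence, after passing to a subsequence with distinct terms, $\{x_n\}$, $\{y_n\}$ and $\{x_n\}\cup\{y_n\}$ are closed and discrete (in particular locally finite). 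Put $\eps_1:=\eps_0/3$ and let $W_n$ be the connected component of $y_n$ in the open ball $B(y_n,\eps_1)$. By local connectedness $W_n$ is open in $X$, is clopen in $B(y_n,\eps_1)$, and is connected of diameter $<\eps_0$; therefore $x_n\notin\overline{W_n}$ (otherwise $x_n,y_n$ would lie in the connected set $\overline{W_n}$ of diameter $<\eps_0$). Since $W_n$ is clopen in $B(y_n,\eps_1)$ we get $\overline{W_n}\setminus W_n\subseteq\{\,d(\cdot,y_n)=\eps_1\,\}$, hence the boundary of $\overline{W_n}$ in $X$ also lies in this sphere. Thus for each $n$ exactly one alternative holds: \emph{(i) disconnected case:} $\overline{W_n}$ has empty boundary in $X$, i.e. it is a clopen subset of $X$ of diameter $<\eps_0$ containing $y_n$ but not $x_n$; \emph{(ii) connected case:} $\overline{W_n}$ has nonempty boundary in $X$, and then $\dist(y_n,\,\partial_X\overline{W_n})=\eps_1$ exactly. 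Passing to a further subsequence, one case occurs for all $n$.

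\medskip
\noindent\textbf{The construction.} In the \emph{connected case} set $S_n:=\overline{W_n}$ and $f_n(z):=\max\{0,\ 1-\eps_1^{-1}d(z,y_n)\}$ on $S_n$, extended by $0$ outside $S_n$. Because $d(\cdot,y_n)\equiv\eps_1$ on $\partial_X S_n$, the function $f_n$ vanishes there, so the extension is continuous; and $f_n\!\restriction_{S_n}$ is $\eps_1^{-1}$-Lipschitz \emph{with a constant independent of $n$} --- this is exactly why depth is measured against $\partial_X\overline{W_n}$ rather than against $X\setminus W_n$. Now one of two situations occurs. If $\{y_n\}$ has a $\sigma_0$-separated subsequence, redo the above with $\eps_1:=\min\{\eps_0/3,\ \sigma_0/8\}$ (any $\eps_1<\eps_0/2$ is admissible in the dichotomy above); then the $S_n$ are pairwise at distance $\ge\tfrac34\sigma_0$ and each misses every $x_m$, so $f:=\sum_nf_n$ is continuous, $f\equiv0$ on $F_1:=\overline{X\setminus\bigcup_n\{f_n>0\}}$, and $f\!\restriction_{F_2}$ (with $F_2:=\bigcup_nS_n$) coincides on each $S_n$ with the uniformly Lipschitz $f_n$; since $f(y_n)=1$, $f(x_n)=0$ and $d(x_n,y_n)\to0$, $f$ witnesses non-straightness of $X$. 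Otherwise $\{y_n\}$ has a subsequence converging, in the completion $\widehat X$, to some $y^*\notin X$, and then $x_n,y_n\to y^*$; the points $x_n$ and $y_n$ eventually lie in \emph{different} components of the open set $B_{\widehat X}(y^*,\eps_1/2)\cap X$, and (after a pigeonhole choice, and using that a disjoint union of components is clopen in $X$ by a local-connectedness argument) one finds a single component $A$ containing infinitely many $y_n$ and none of the corresponding $x_n$. On $A$ the function $f(z):=\max\{0,\ 1-2\eps_1^{-1}d_{\widehat X}(z,y^*)\}$, extended by $0$, is continuous (it vanishes on $\partial_X A$, which sits in the sphere $\{d_{\widehat X}(\cdot,y^*)=\eps_1/2\}$), is uniformly Lipschitz on $\overline A=:F_2$ and is $\equiv0$ on $F_1:=X\setminus A$, while $f(y_n)\to1$ and $f(x_n)=0$; again $X$ is not straight. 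The \emph{disconnected case} is similar but easier: the clopen sets $\overline{W_n}$ allow one to separate, via characteristic functions, infinitely many of the pairs $(x_n,y_n)$ --- one either finds a single clopen $C$ of diameter $<\eps_0$ with $y_n\in C$, $x_n\notin C$ for infinitely many $n$, or, after a diagonal extraction making the relevant clopen pieces disjoint, forms their (clopen) union; in either case $f=\chi_C$ does the job.

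\medskip
\noindent\textbf{Expected main obstacle.} The only delicate point is the second bullet of the construction: the pairs $(x_n,y_n)$ need \emph{not} admit any uniformly separated subsequence (the $x_n$ may be a non-convergent Cauchy sequence, e.g. in a countable disjoint union of arcs clustering at a hole of the completion), so the naive ``sum of bumps'' cannot be forced to have uniformly discrete supports. Getting around this --- via the dichotomy ``separated / clustered at a hole $y^*$'', the component-of-a-small-ball-about-$y^*$ trick, and the bookkeeping needed to verify in each branch that the assembled $f$ is genuinely u.c. on each of $F_1$ and $F_2$ --- is where essentially all the work lies. (An alternative, more economical route replaces this analysis by the known reformulation of straightness in terms of pairs of closed sets covering $X$, available from the earlier work cited in the introduction.)
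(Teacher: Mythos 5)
Note first that the paper does not prove this statement itself; it is imported from \cite[Theorem 3.9]{BDP3}, so your argument has to stand on its own. Your first half (ULC $\Rightarrow$ straight) is correct: the chain lemma for a connected set covered by finitely many closed sets, followed by telescoping with modulus $\eps/k$ on each link, is a clean and complete argument.

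The second half, however, breaks exactly at the point you yourself flagged as the main obstacle. In the ``clustered at a hole'' branch you claim that, after pigeonholing, one finds a \emph{single} component $A$ of $U:=B_{\widehat X}(y^*,\eps_1/2)\cap X$ containing infinitely many $y_n$ and none of the corresponding $x_n$. This is false in general: take $X\subset\R^2$ to be the union of the segments $L_n=\{1/n\}\times[0,1]$ and $M_n=\{1/n+4^{-n}\}\times[0,1]$, with $y_n=(1/n,0)$ and $x_n=(1/n+4^{-n},0)$. Then $X$ is locally connected and not ULC, the $y_n$ converge to the hole $y^*=(0,0)$ of the completion, and every component of every ball about $y^*$ meets at most one $y_n$. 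Your fallback, ``a disjoint union of components is clopen in $X$ by a local-connectedness argument,'' is also false: a union of components of the open set $U$ is clopen \emph{in $U$}, not in $X$ (in general it accumulates on the boundary of $U$). The same defect recurs in your ``disconnected case,'' where an infinite union of clopen sets need not be closed in $X$.

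The repair is small and uses an observation you already made for a single component. If $P=\bigcup_{n\in J}A_n$ is \emph{any} union of components of $U$, then $P$ is clopen in $U$, hence $\overline{P}^{\,X}\setminus P\subseteq X\setminus U$, while $\overline{P}^{\,X}\subseteq\{d_{\widehat X}(\cdot,y^*)\le\eps_1/2\}$; so $\overline{P}^{\,X}\setminus P$ still lies in the sphere $\{d_{\widehat X}(\cdot,y^*)=\eps_1/2\}$, and the very same bump $f(z)=\max\{0,\,1-2\eps_1^{-1}d_{\widehat X}(z,y^*)\}$ on $\overline{P}$, extended by $0$, is continuous, Lipschitz on $F_2:=\overline{P}$ and identically $0$ on $F_1:=X\setminus P$. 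One then only needs to choose the infinite set $J$ so that no $x_m$ with $m\in J$ lies in any $A_n$ with $n\in J$; this is possible because each $x_m$ lies in at most one component of $U$, which is different from $A_m$ (either some single component absorbs infinitely many $x_m$, or the resulting graph is locally finite and admits an infinite independent set). With this change your contrapositive goes through. More economically, once $F_1,F_2$ are in hand you can drop the function entirely and invoke the u-placed criterion (Theorem \ref{uplaced}): $y_n\in F_2$ and $x_n\in F_1$ are adjacent, while $F_1\cap F_2=\overline{P}^{\,X}\setminus P$ stays at distance roughly $\eps_1/4$ from both sequences.
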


 In particular $\R$ is straight and every topological vector space is straight.  The circle minus a point is not straight (it is locally connected but not uniformly so). As far as non-locally connected spaces are concerned, $\Q$ is not straight. More generally a totally disconnected space is straight if and only if it is a UC-space (\cite[Theorem 4.6]{BDP3}).

One of the main results of \cite{BDP+} is a characterization of the complete straight spaces in terms of the properties of the 
quasi-components of the space and its subspaces (see Corollary \ref{coro_comp_try} and Definition \ref{deadend} below).

If a product $X\times Y$ is straight then both $X$ and $Y$ are straight, but the converse is not true in general (i.e., the class of straight spaces is not closed under finite products).
   One of the main goals of the paper is to establish precisely  when straightness is preserved under products.

As a first step we show that the class of ULC spaces behaves better in this respect: it is included in the class of straight
spaces and it is stable under finite products (Lemma~\ref{prod_ULC}). Moreover, if $X$ is a precompact ULC space, then $X\times Y$ is straight for every
straight space $Y$ (and this property characterizes the precompact ULC space,
cf. Theorem \ref{COR.pre-ULC}), i.e., the precompact ULC spaces have the best possible behavior with respect to productivity.

The failure of the corresponding property for straight spaces can be witnessed as follows: if $K$ is a totally disconnected 
compact space (e.g. the Cantor space), then $\R \times K$ is not straight although both factors are straight. This follows 
from the following curious {\em dichotomy}: if a product $X\times Y$ is straight, then either $X$ is precompact or
$Y$ is uniformly locally connected (Corollary \ref{Thm_loc_conn}). This implies the ``only if'' direction in the following theorem that completely describes when straightness is available for a product of two spaces.

\bigskip

\noindent {\bf Theorem A.} {\em The product $X\times Y$ of two metric spaces
is straight if and only if both $X$ and $Y$ are straight and one of the
following conditions holds:
\begin{itemize}
\item[(a)] both $X$ and $Y$ are precompact;
\item[(b)] both $X$ and $Y$ are ULC;
\item[(c)] one of the spaces is both precompact and ULC.
\end{itemize}}

   \bigskip
The sufficiency of (b) and (c) was already commented above. To the proof of the sufficiency of (a) is dedicated the entire \S \ref{suff(a)}. 
The proofs use essentially criterions (Theorem \ref{str-dense} and Lemma \ref{dense-super}) for straightness of dense subspaces based on the notion of a {\em tight extension}
(this is specific form of dense embedding introduced in \cite{BDP+}, see Definition \ref{def tight}). The class of tight embeddings has many nice properties that could be useful in other situations (see Theorem \ref{NewTh} and \ref{pulc}, as well as the comment in the last section). 
In Theorem \ref{NewTh} we establish first a natural general property of the class of tight maps: they are closed under finite products. As a corollary we obtain the sufficiency of (a) (Theorem~\ref{yama-ales}). To resume, the proof of Theorem A is contained in Corollary \ref{Thm_loc_conn}, Lemma \ref{prod_ULC}, 
Theorem \ref{yama-ales} and Theorem \ref{COR.pre-ULC}. This theorem was announced without proof in \cite{BDP+}. 

For reader's convenience we formulate explicitly the following immediate corollary  from Theorem A: 

\bigskip

\noindent {\bf Corollary 1.} {\em Let $X_1, \ldots, X_n$ be metric spaces. Then $X = \prod_{i=1}^n X_i$
is straight if and only if all spaces $X_i$ are straight and one of the
following conditions holds:
\begin{itemize}
   \item[(a)] all spaces $X_i$ are precompact;
   \item[(b)] all spaces $X_i$ are ULC;
   \item[(c)] all but one of the spaces are both precompact and ULC.
\end{itemize}}

\bigskip

The following fact on straightness of products of two spaces is established also by Nishijima and  Yamada \cite{Y}  if $X \times (\omega + 1)$ is straight for some metric space $X$, then $X$ is precompact and $X \times K$ is straight for every compact space $K$ (see Example \ref{exYamada} and Corollary \ref{yam} for more details).

Finally, in \S 6 we face the problem of straightness of infinite products of spaces and
we completely solve the problem of straightness of infinite products of ULC spaces: 

\bigskip

\noindent {\bf Theorem B.} {\em  Let $X_n$ be a ULC space for each $n\in \N$ and $X = \Pi_n X_n$. 
\begin{itemize}
  \item[(a)] $X$ is ULC iff all but finitely many $X_n$ are connected.
  \item[(b)] The following are equivalent:
\begin{itemize}
    \item[(b$_1$)] $X$ is straight.
    \item[(b$_2$)] either $X$ is ULC or each $X_n$ is precompact.
\end{itemize}
\end{itemize}}

\bigskip

This theorem is proved at the end of \S 6. In particular, the theorem completely settles the case of infinite powers of ULC space:

\bigskip

\noindent {\bf Corollary 2.} {\em Let $X$ be ULC. Then 
\begin{itemize}
   \item[(a)] $X^\omega $ is ULC iff $X$ is connected; 
   \item[(b)] $X^\omega$ straight iff $X$ is either connected or precompact.
\end{itemize}}

As far as straightness of infinite products $X = \Pi_n X_n$ is concerned, we prove in Proposition \ref{NecessInf} that straightness of 
$X$ implies the straightness of each space as well as the disjunction of the condition (b$_2$) (from Theorem B) and the following one:

(i) all but one of the spaces are both precompact and ULC and all but finitely many of the spaces are connected.

While (i) is easily seen to be also sufficient (see Remark \ref{NewRemark}), we do not know whether a product of infinitely many precompact straight spaces
is straight (see Question \ref{Ques-Inf}). 

\bigskip

\noindent{\sc Acknowledgements:} It is a pleasure to thank the referee for her/his very careful reading
and numerous useful suggestions. 

\section{Background}\label{PP}

\bigskip
\noindent {\bf Notations.}
\begin{enumerate}
\item We identify $\omega+1$ with a compact subset of $\R$ of order type $\omega + 1$, namely with an increasing converging sequence together with its limit point.
\item Usually a metric space $X$ with metric $d$ will be denoted by $(X,d)$. In the presence of more spaces $X$, $Y$, we will use subscprits $d_X$, $d_Y$ to avoid confusion.   
\item As we are interested in the uniform
properties of metric spaces, we can assume that metrics are bounded by $1$ to avoid unnecessary difficulties.
  \item Unless otherwise stated, the metric $d(x,y)$ on a product $\prod_{i=1}^n X_i$ of finitely many metric spaces $(X_i,d_i)$ is
defined as the sum $\Sigma_i d_i(x_i,y_i)$, where $x_i,y_i$ are the coordinates of $x,y$. In the case of an infinite 
(countable) product $\prod_{i=1}^\infty X_i$, one has to 
start with uniformly bounded metrics $d_n$ (see the remark in the 
previous item) and define $d(x,y) = \Sigma_n \frac 1{2^n} d_n(x_n,y_n)$ where $x$ and
$y$ are points from the product $ \prod_{i=1}^n X_n$ and $x_i$ and $y_i$ are corresponding coordinates. 
  \item We will frequently subscripts like $C^+_\eps$, $C^-_\eps$ and variants of it (e.g. $A_\eps, B_\eps$ where $A,B$ is a given binary cover of a space). Such notation refers to Definition \ref{notation}. 
  \item The ball of center $x$ and radius $\eps$ in a metric space $(X,d)$ is denoted by $B_\eps(x)$. If the metric is not
clear from the context we also use the notation $B^d_\eps(x)$. For a metric space $M$, we use also $B^M_\eps (x)$; it can be convenient
when we deal with a space and its subspaces.
\end{enumerate}

We recall here some non-trivial facts from \cite{BDP3} which will be often used in the sequel.

In the definition of ``straight'' it suffices to consider only binary unions:

\begin{theorem}(\cite{BDP3}) A space $X$ is straight if and only if whenever $X$ is the union of
two closed sets, then $f \in C(X)$ is u.c. if and only if its restriction to each of the closed sets is u.c.
\end{theorem}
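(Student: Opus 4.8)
The direction ``straight $\Rightarrow$ binary version'' is immediate: a cover by two closed sets is in particular a finite closed cover. For the converse, assume the binary-cover property, fix a closed cover $X=C_1\cup\dots\cup C_n$ and $f\in C(X)$ that is u.c.\ on each $C_i$, and aim to show $f$ is u.c.\ on $X$. The first attempt is induction on $n$: set $D=C_1\cup\dots\cup C_{n-1}$ (closed), write $X=D\cup C_n$, and apply the binary-cover hypothesis to $\{D,C_n\}$, which reduces everything to showing that $f$ is u.c.\ on $D$. The difficulty is that $D$ is an $(n-1)$-fold union, so to continue one would want the inductive statement for the space $D$; but that requires $D$ itself to have the binary-cover property, and this property is \emph{not} inherited by closed subspaces --- two disjoint closed graphs in $\R^2$ at mutual distance $0$ form a closed subspace of the straight space $\R^2$ (straight by Theorem~\ref{Th_loc_con_vs_ULC}) which is not straight. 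Dealing with this is the heart of the matter.

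The plan is to trade the binary-cover property for an equivalent purely metric condition that does pass to finite covers: $X$ has the binary-cover property if and only if for every closed cover $\{A,B\}$ and every $\eps>0$ one has $\dist\bigl(\{a\in A: d(a,A\cap B)\ge\eps\},\ \{b\in B: d(b,A\cap B)\ge\eps\}\bigr)>0$ (read as $\dist(A,B)>0$ when $A\cap B=\emptyset$). Sufficiency of this condition is the usual $\eps$--$\delta$ estimate: given $f$ u.c.\ on $A$ and on $B$ and $\eta>0$, pick $\eps$ so small that points of one piece within $2\eps$ have $f$-values within $\eta/2$; two points $x\in A$, $y\in B$ closer than the resulting distance threshold cannot both be $\eps$-far from $A\cap B$, so one lies within $\eps$ of some $z\in A\cap B$ and $|f(x)-f(y)|\le|f(x)-f(z)|+|f(z)-f(y)|<\eta$. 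For necessity, if the distance is $0$ for some $\{A,B\}$ and $\eps$, the function that is $\min(\eps,d(\cdot,A\cap B))$ on $A$ and $-\min(\eps,d(\cdot,A\cap B))$ on $B$ is continuous (it vanishes on $A\cap B$), u.c.\ on each piece, and not u.c.\ on $X$.

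Granting this reformulation, conclude as follows. Suppose $f$ is not u.c.\ on $X$ and choose $x_k,y_k$ with $d(x_k,y_k)\to0$, $|f(x_k)-f(y_k)|\ge\eps_0$. Passing to a subsequence, assume $f(x_k)\to\alpha$, $f(y_k)\to\beta$ with $\alpha<\beta$, and (finitely many indices) $x_k\in C_p$, $y_k\in C_q$ for fixed $p,q$; since $f$ is u.c.\ on each $C_i$, $p\ne q$. For $\gamma\in(\alpha,\beta)$ consider the binary closed cover $A_\gamma=f^{-1}((-\infty,\gamma])$, $B_\gamma=f^{-1}([\gamma,\infty))$, with $A_\gamma\cap B_\gamma=f^{-1}(\gamma)$ and eventually $x_k\in A_\gamma$, $y_k\in B_\gamma$. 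If some $\gamma$ has the $x_k$ bounded away from $f^{-1}(\gamma)$, then so are the $y_k$, and the two sequences violate the distance condition for $\{A_\gamma,B_\gamma\}$ --- contradiction. Otherwise, for every $\gamma\in(\alpha,\beta)$ the points $x_k$ accumulate (along a subsequence) onto $f^{-1}(\gamma)$; pigeonholing the approximating points into one fixed piece $C_{m(\gamma)}$ and using uniform continuity of $f$ there forces $\gamma$ not to vary over an interval, again a contradiction. Hence $f$ is u.c.\ on $X$. I expect the genuinely delicate step to be precisely this final dichotomy --- arranging the bookkeeping so that approximants arising from different $\gamma$'s and different subsequences can be compared inside a single piece $C_m$; the rest is $\eps$--$\delta$ routine together with pigeonholing on finitely many indices.
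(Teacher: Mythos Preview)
The paper does not give a proof of this theorem; it is quoted from \cite{BDP3} as background, so there is nothing to compare your argument against here. Your middle paragraph --- the equivalence between the binary gluing property and the u-placed condition --- is correct and is precisely Theorem~\ref{uplaced} of the present paper.

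The gap is in Case~2 of your final dichotomy. You produce, for each $\gamma\in(\alpha,\beta)$, approximants $z_j(\gamma)\in f^{-1}(\gamma)\cap C_{m(\gamma)}$ with $d(x_{k_j(\gamma)},z_j(\gamma))\to 0$, and then pigeonhole so that uncountably many $\gamma$ share the same $m(\gamma)=m$. But the approximants for two different values $\gamma_1,\gamma_2$ sit near \emph{different} subsequences of $(x_k)$; there is no reason for $z_j(\gamma_1)$ and $z_{j'}(\gamma_2)$ to be close, so uniform continuity of $f|_{C_m}$ yields no relation between $\gamma_1$ and $\gamma_2$, and no contradiction follows. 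A measure count also fails: for small $\delta$ each image $f(B_\delta(x_k))$ is covered by $n$ sets of small diameter, but ``every $\gamma$ lies in infinitely many $f(B_\delta(x_k))$'' is perfectly compatible with each such image having small measure. (There is also a minor point: the convergence $f(x_k)\to\alpha$, $f(y_k)\to\beta$ is not automatic and needs a subsequence in $[-\infty,\infty]$.)

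A route that does close the argument, using exactly the u-placed reformulation you proved: apply it not to one binary cover but to every cover $\{A_S,B_S\}$ with $A_S=\bigcup_{i\in S}C_i$ and $B_S=\bigcup_{i\notin S}C_i$. Given $x,y$ close with $x\in C_p$, $y\in C_q$, set $S_0=\{i:x\in C_i\}$; as long as $y\in B_{S_j}$, u-placedness of $\{A_{S_j},B_{S_j}\}$ gives a point $z_{j+1}$ close to $z_j$ lying in $C_i\cap C_{i'}$ for some $i\in S_j$ and $i'\notin S_j$, and one sets $S_{j+1}=S_j\cup\{i:z_{j+1}\in C_i\}\supsetneq S_j$. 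Since the $S_j$ strictly increase in $\{1,\dots,n\}$ the process halts in at most $n$ steps with a chain from $x$ to $y$ in which consecutive points share a piece $C_i$; summing the increments and using u.c.\ on each piece bounds $|f(x)-f(y)|$.
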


Using this characterization one can prove the following necessary and sufficient condition for straightness. We need first a definition.

\begin{definition}\label{notation}
Let $(X,d)$ be a metric space. A pair $C^+,C^-$ of closed sets of $X$ is {\em u-placed} if $d(C^+_\eps, C^-_\eps)>0$ holds for every $\eps>0$, where
$C^+_\eps=\{x\in C^+: d(x,C^+\cap C^-)\geq \eps\}$ and $C^-_\eps=\{x\in C^-: d(x,C^+\cap C^-)\geq \eps\}$.
\end{definition}

In other words $C^+,C^-$ is u-placed if for every pair of sequences $x_n \in C^+$ and $y_n \in C^-$ with $d(x_n,y_n) \to 0$, we have
$d(x_n, C^+\cap C^-) \to 0$ (for $n \to \infty$). In particular, if $C^+\cap C^-=\emptyset$, then $C^+,C^-$ is u-placed iff $d(C^+, C^-)>0$.

\begin{theorem}\label{uplaced} (\cite[Corollary 2.10]{BDP3}) A metric space $(X,d)$ is straight if and only if
every pair of closed subsets, which form a cover of $X$, is u-placed.
\end{theorem}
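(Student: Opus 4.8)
The plan is to start from the reduction to binary closed covers stated just above: $X$ is straight iff for every closed cover $X = C^+ \cup C^-$ and every $f \in C(X)$ whose restrictions $f|_{C^+}$, $f|_{C^-}$ are u.c., $f$ itself is u.c. on $X$. So I would fix a closed cover $X = C^+ \cup C^-$, write $A = C^+ \cap C^-$, and work with the sequential reformulation of u-placedness recorded after Definition~\ref{notation}: the pair is u-placed iff whenever $x_n \in C^+$, $y_n \in C^-$ and $d(x_n, y_n) \to 0$, then $d(x_n, A) \to 0$ --- and when $A = \emptyset$ this just says $d(C^+, C^-) > 0$.

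For the ``if'' direction I would assume the pair is u-placed, take $f \in C(X)$ u.c. on $C^+$ and on $C^-$, and argue by contradiction: if $f$ is not u.c. on $X$ there are $\eps_0 > 0$ and $u_n, v_n$ with $d(u_n, v_n) \to 0$, $|f(u_n) - f(v_n)| \ge \eps_0$. Passing to a subsequence, each of $u_n$, $v_n$ lies in a fixed member of the cover; if both lie in the same member this contradicts u.c. of $f$ there, so (up to symmetry) $u_n \in C^+$, $v_n \in C^-$. Then $A \ne \emptyset$ (else $d(C^+,C^-)>0$ contradicts $d(u_n,v_n)\to 0$), so u-placedness gives $d(u_n, A) \to 0$; picking $w_n \in A$ with $d(u_n, w_n) < d(u_n,A) + 1/n$ forces $d(u_n, w_n) \to 0$ and, by the triangle inequality, $d(v_n, w_n) \to 0$. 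Now u.c. of $f$ on $C^+$ and on $C^-$ kills $|f(u_n) - f(w_n)|$ and $|f(w_n) - f(v_n)|$, hence $|f(u_n) - f(v_n)| \to 0$ --- contradiction, so $f$ is u.c. on $X$.

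For the ``only if'' direction I would assume some closed cover $X = C^+ \cup C^-$ is not u-placed and manufacture a function contradicting straightness. If $A = \emptyset$, then $C^+, C^-$ are disjoint closed sets covering $X$, hence clopen, and the function equal to $1$ on $C^+$ and to $-1$ on $C^-$ is continuous, constant (so u.c.) on each piece, but not u.c. on $X$ since $d(C^+, C^-) = 0$. Otherwise pick $\eps > 0$ with $d(C^+_\eps, C^-_\eps) = 0$, choose $x_n \in C^+_\eps$, $y_n \in C^-_\eps$ with $d(x_n, y_n) \to 0$, set $\phi(t) = \min(1, t/\eps)$, and define $f(x) = \phi(d(x,A))$ on $C^+$ and $f(x) = -\phi(d(x,A))$ on $C^-$. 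This is well defined ($\phi(0)=0$ on $A$) and continuous by the pasting lemma; on each piece it is $\pm$ the composite of a $(1/\eps)$-Lipschitz function with the $1$-Lipschitz map $d(\cdot,A)$, hence u.c.; but $f(x_n) = 1$, $f(y_n) = -1$ while $d(x_n, y_n) \to 0$, so $f$ is not u.c. on $X$.

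The only step requiring any ingenuity is the construction of that witnessing $f$ in the second direction: it has to paste continuously across $A$, become Lipschitz once restricted to either $C^+$ or $C^-$, and yet oscillate by a fixed amount between the ``shells'' $C^+_\eps$ and $C^-_\eps$, which lie close together but far from $A$. The truncated, oppositely signed distance-to-$A$ does all three. Everything else --- the preliminary reduction to binary covers, the subsequence extractions, the triangle-inequality bookkeeping --- is routine.
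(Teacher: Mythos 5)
Your proof is correct. Note that the paper does not prove Theorem~\ref{uplaced} at all --- it is quoted from \cite[Corollary 2.10]{BDP3} as background --- so there is no in-paper argument to compare with; your two-sided argument (the sequential form of u-placedness combined with uniform continuity on the two pieces for sufficiency, and for necessity the $\pm 1$ locally constant function when $C^+\cap C^-=\emptyset$, respectively the truncated, oppositely signed distance-to-$A$ function otherwise) is sound, including the well-definedness and continuity of the pasted function and the fact that points of $C^+_\eps$ cannot lie in $C^-$, so the values $\pm 1$ on the shells are unambiguous.
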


\begin{corollary}\label{clopen-str} (\cite{BDP+}) If a metric space $(X,d)$ is straight and a proper subset $H \subset X$ is clopen, then the distance between $H$ and $X\setminus H$ is positive.
\end{corollary}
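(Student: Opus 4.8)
The plan is to derive Corollary~\ref{clopen-str} directly from the u-placed characterization in Theorem~\ref{uplaced}. First I would set $C^+ = H$ and $C^- = X\setminus H$. Since $H$ is clopen, both sets are closed, and they clearly cover $X$, so Theorem~\ref{uplaced} applies and tells us the pair $C^+, C^-$ is u-placed. The key observation is that $H$ being clopen forces $C^+\cap C^- = \overline{H}\cap \overline{X\setminus H} = H\cap(X\setminus H) = \emptyset$, using that $H$ is both closed and open. Because the intersection is empty, the sets $C^+_\eps$ and $C^-_\eps$ appearing in Definition~\ref{notation} degenerate: for every $\eps>0$ the distance $d(x, C^+\cap C^-) = d(x,\emptyset)$ should be read as $+\infty$ (or, sticking to the bounded-metric convention, as larger than $1$), so $C^+_\eps = C^+$ and $C^-_\eps = C^-$ for all $\eps>0$.

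Then I would simply invoke the last sentence of the paragraph following Definition~\ref{notation}, which records exactly the case $C^+\cap C^- = \emptyset$: in that case the pair is u-placed iff $d(C^+, C^-) > 0$. Since we already know from Theorem~\ref{uplaced} that $H, X\setminus H$ is u-placed, we conclude $d(H, X\setminus H) > 0$, which is the assertion (the hypothesis that $H$ is proper guarantees $X\setminus H \neq \emptyset$, so the distance is between two nonempty sets and the statement is not vacuous). If one prefers not to rely on that remark, one can unwind it in one line: from u-placedness applied with $\eps$ fixed at, say, $\eps = 1$ (any positive value works since $C^\pm_\eps = C^\pm$), we get $d(C^+_1, C^-_1) = d(H, X\setminus H) > 0$.

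There is essentially no obstacle here — the only point requiring a word of care is the conventional reading of $d(x,\emptyset)$, i.e. making explicit that when $C^+\cap C^- = \emptyset$ the truncation sets $C^\pm_\eps$ do not shrink at all. Everything else is a direct substitution into the already-proved equivalence of Theorem~\ref{uplaced}, so I would keep the proof to two or three sentences.
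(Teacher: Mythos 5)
Your argument is correct: taking $C^+ = H$, $C^- = X\setminus H$ (both closed since $H$ is clopen) and applying Theorem~\ref{uplaced} together with the remark after Definition~\ref{notation} that an empty intersection reduces u-placedness to $d(C^+,C^-)>0$ is exactly the intended derivation, and your care about $C^\pm_\eps = C^\pm$ when $C^+\cap C^-=\emptyset$ is the only point needing mention. The paper itself quotes this corollary from \cite{BDP+} without reproducing a proof, but your two-line argument is the natural one within the paper's own framework and fills that gap faithfully.
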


Now we will need the following equivalent description of ULC spaces: 

\begin{lemma}\label{unif.l.c.} A metric space $(X,d)$ is ULC if and only if for each $\eps >0$ there is a positive $\delta$ such that for each $x\in X$, there is an open connected set $W_x$ such that
   \begin{equation}\label{obvious}
      B_\delta (x)\subseteq W_x\subseteq B_\eps (x).
\end{equation}
\end{lemma}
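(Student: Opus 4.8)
The plan is to derive each implication directly from the definitions, using as the one substantial ingredient the fact recalled just before the lemma that every ULC space is locally connected, so that the connected components of its open subsets are open.

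For the ``if'' direction I would start from the displayed condition applied not to $\eps$ but to $\eps/3$: this yields a $\delta>0$ and, for each $x$, an open connected set $W_x$ with $B_\delta(x)\subseteq W_x\subseteq B_{\eps/3}(x)$. Any two points $x,y$ at distance $<\delta$ then both lie in $W_x$, which is connected and has diameter at most $2\eps/3<\eps$; hence $X$ is ULC.

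For the ``only if'' direction, assume $X$ is ULC and fix $\eps>0$. First I would invoke the definition of ULC to obtain $\delta>0$ such that any two points at distance $<\delta$ lie in a connected set of diameter $<\eps$. Then, for each $x$, I would take $W_x$ to be the connected component of the open ball $B_\eps(x)$ that contains $x$. By construction $W_x$ is connected and $W_x\subseteq B_\eps(x)$, and since $X$ is locally connected the components of open sets are open, so $W_x$ is open. It remains to check $B_\delta(x)\subseteq W_x$: given $y$ with $d(x,y)<\delta$, pick a connected set $C$ with $x,y\in C$ and of diameter $<\eps$; since $x\in C$ this forces $C\subseteq B_\eps(x)$, so $C$ is a connected subset of $B_\eps(x)$ meeting the component $W_x$ (at $x$), whence $C\subseteq W_x$ and in particular $y\in W_x$. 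This establishes \eqref{obvious}.

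I expect no genuine obstacle: the argument is short and the estimates are routine. The only point worth isolating — and the step I would flag — is the passage to an \emph{open} set: a naive candidate such as the union of all connected subsets of $X$ through $x$ of diameter $<\eps$ is connected and sits inside $B_\eps(x)$, but need not be open, so one really has to use local connectedness of ULC spaces and work instead with the connected component of a ball.
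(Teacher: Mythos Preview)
Your proof is correct and follows essentially the same idea as the paper's: both hinge on the fact that a ULC space is locally connected, so one can pass from a connected set to an \emph{open} connected one by taking the connected component of the ball $B_\eps(x)$. The only difference is packaging: the paper cites \cite[Lemma~3.1]{BDP+} for the version without openness and then remarks that local connectedness lets one enlarge $W_x$ to an open connected $W_x^*\subseteq B_\eps(x)$, whereas you give the argument self-contained by directly defining $W_x$ as the component of $x$ in $B_\eps(x)$ and verifying $B_\delta(x)\subseteq W_x$ from the ULC definition.
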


Without the requirement of openness of $W_x$ this is \cite[Lemma 3.1]{BDP+}. It remains to observe that once a connected set   $W_x$ with the above property is found, one can use local connectedness of the space to replace $W_x$ by a larger  set $W_x^*$ still contained in $B_\eps (x)$ that is both connected and open. 

Another group of results from \cite{BDP+} we are going to use here concerns preservation of straightness under extensions. The following property of extensions will be crucial.

\begin{definition}\label{def tight}(\cite{BDP+}) An extension $X\subseteq Y$ of topological spaces is called {\bf tight} if for
every closed binary cover $X=F^+\cup F^-$ one has
\begin{equation}\label{def_ttt}
     \overline{F^+}^Y\cap \overline{F^-}^Y=\overline{F^+\cap F^-}^Y.
\end{equation}
\end{definition}

Let us note that even a one-point extension can easily fail to be tight: take $X=\{1/n:n\in \N\}$, $Y=X\cup \{0\}$ and as $F^+, F^-$ the subsequences with
even and odd indices respectively. Examples of tight extensions are provided by the following

\begin{theorem}\label{str-dense}(\cite{BDP+}) Let $X,\ Y$ be metric spaces, $X\subseteq Y$ and let $X$ be dense in $Y$. Then
$X$ is straight if and only if $Y$ is straight and the extension $X\subseteq Y$
is \ttt. \end{theorem}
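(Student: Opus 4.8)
The plan is to run everything through the u-placedness criterion of Theorem~\ref{uplaced}, using two elementary facts about a dense subspace $X\subseteq Y$ of a metric space: first, $\overline{A\cup B}^Y=\overline{A}^Y\cup\overline{B}^Y$, so every closed binary cover of $X$ generates one of $Y$; second, for any $S\subseteq Y$ and $p\in Y$ one has $d(p,S)=d(p,\overline{S}^Y)$, so distances towards a subset of $X$ may freely be computed in $X$ or in $Y$. The two implications of the equivalence are then proved separately.

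For ``$Y$ straight and $X\subseteq Y$ tight $\Rightarrow X$ straight'', I would start with an arbitrary closed binary cover $X=F^+\cup F^-$ and check it is u-placed. Since $X$ is dense, $Y=\overline{F^+}^Y\cup\overline{F^-}^Y$ is a closed binary cover of $Y$, hence u-placed because $Y$ is straight. Given $x_n\in F^+$ and $y_n\in F^-$ with $d(x_n,y_n)\to 0$, u-placedness in $Y$ yields $d(x_n,\overline{F^+}^Y\cap\overline{F^-}^Y)\to 0$; tightness rewrites that intersection as $\overline{F^+\cap F^-}^Y$ by \eqref{def_ttt}, and since $F^+\cap F^-\subseteq X$ we may replace the closure by $F^+\cap F^-$, obtaining $d(x_n,F^+\cap F^-)\to 0$. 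Thus $F^+,F^-$ is u-placed, so $X$ is straight.

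For the converse, assume $X$ is straight. Tightness is the easy half: if some closed binary cover $X=F^+\cup F^-$ violated \eqref{def_ttt}, there would be $p\in\overline{F^+}^Y\cap\overline{F^-}^Y$ with $p\notin\overline{F^+\cap F^-}^Y$, whence $\eps_0:=d(p,F^+\cap F^-)>0$; choosing $x_n\in F^+$ and $y_n\in F^-$ with $x_n,y_n\to p$ gives $d(x_n,y_n)\to 0$ but $d(x_n,F^+\cap F^-)\ge\eps_0-d(x_n,p)\to\eps_0>0$, contradicting u-placedness of $F^+\cup F^-$ in the straight space $X$. To show $Y$ is straight, take a closed binary cover $Y=G^+\cup G^-$ and put $F^\pm:=G^\pm\cap X$, a closed binary cover of $X$, hence u-placed. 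Given $a_n\in G^+$, $b_n\in G^-$ with $d(a_n,b_n)\to 0$, it suffices to treat the indices with $a_n\in G^+\setminus G^-$ and $b_n\in G^-\setminus G^+$, since for the remaining indices $d(a_n,G^+\cap G^-)\to 0$ is immediate. For such $n$, closedness of $G^-$ (resp.\ $G^+$) gives a ball around $a_n$ (resp.\ $b_n$) disjoint from $G^-$ (resp.\ from $G^+$), and density of $X$ produces $a_n'\in F^+$ and $b_n'\in F^-$ with $d(a_n,a_n'),d(b_n,b_n')<1/n$; then $d(a_n',b_n')\to 0$, so u-placedness of $F^+,F^-$ in $X$ gives $d(a_n',F^+\cap F^-)\to 0$, and since $F^+\cap F^-\subseteq G^+\cap G^-$ and $d(a_n,a_n')\to 0$ we conclude $d(a_n,G^+\cap G^-)\to 0$. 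Hence $G^+,G^-$ is u-placed and $Y$ is straight.

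The main obstacle is the last step: a point $a_n\in G^+$ need not be a limit of points of $F^+=G^+\cap X$, because its $Y$-neighbourhoods may meet $X$ only inside $G^-$. The remedy is exactly the reduction to $a_n\in G^+\setminus G^-$, where closedness of $G^-$ forces an entire ball around $a_n$ — and hence nearby points of the dense set $X$ — into $G^+$. Once this localization is in place, the rest is routine bookkeeping with distances and closures.
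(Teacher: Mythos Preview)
Your argument is correct. Note, however, that the present paper does not actually prove Theorem~\ref{str-dense}: it is quoted without proof from \cite{BDP+} as part of the background material in \S\ref{PP}, so there is no in-paper proof to compare against. That said, your approach via the u-placedness criterion (Theorem~\ref{uplaced}) is the natural one and is almost certainly how it is done in \cite{BDP+} as well; in particular your handling of the delicate point---that $a_n\in G^+$ need not lie in $\overline{F^+}^Y$, remedied by passing to the subsequence with $a_n\in G^+\setminus G^-$ so that an entire ball around $a_n$ lies in $G^+$---is exactly the right idea.
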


Since the tightness of an extension $X\subseteq Y$ is equivalent to the joint tightness of all one-point extensions $X\subseteq X \cup \{y\}$, 
$y\in Y \setminus X$, the theorem implies that an extesion $Y$ of $X$ is straight iff the one-point extensions $X \cup \{y\}$ 
are  straight for all $y\in Y \setminus X$. 

By the theorem (and the corollary below) every non-complete straight space has a proper tight extension. 

\begin{corollary}\label{NEWcorol}  Let $X$ be a metric space. Then $X$ is straight if and only if its completion  $\widetilde X$ is straight and $\widetilde X$ is a tight extension of $X$.
\end{corollary}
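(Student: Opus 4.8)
The statement is an immediate specialization of Theorem~\ref{str-dense}. The plan is to apply that theorem with $Y=\widetilde X$, the metric completion of $X$. First I would recall the two standard facts about completions that make the hypotheses of Theorem~\ref{str-dense} available: $\widetilde X$ is again a metric space (its metric extends $d$), and $X$ sits inside $\widetilde X$ as a dense subspace. These are part of the construction of the completion and require no extra argument here.

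With this in place, Theorem~\ref{str-dense} applied to the dense extension $X\subseteq\widetilde X$ says precisely that $X$ is straight if and only if $\widetilde X$ is straight and the extension $X\subseteq\widetilde X$ is \ttt, which is the claim. So the body of the proof is just: ``Apply Theorem~\ref{str-dense} with $Y=\widetilde X$; since $X$ is dense in $\widetilde X$, the conclusion follows.''

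There is essentially no obstacle: the only thing to check is that the completion genuinely satisfies the ``dense subspace of a metric space'' hypothesis, which is routine. If one wanted to be slightly more self-contained, one could additionally remark (as the excerpt already does after Theorem~\ref{str-dense}) that tightness of $X\subseteq\widetilde X$ is equivalent to the joint tightness of all one-point extensions $X\subseteq X\cup\{y\}$ for $y\in\widetilde X\setminus X$; this gives an alternative, point-by-point reformulation of the criterion but is not needed for the corollary itself.
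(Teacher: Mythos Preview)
Your proposal is correct and matches the paper's approach: the corollary is stated immediately after Theorem~\ref{str-dense} without proof, precisely because it is the specialization $Y=\widetilde X$ of that theorem. There is nothing to add.
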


Let us recall some facts from \cite{BDP+} for easier reference:

\begin{lemma}\label{dense-super} \label{ulc-dense}\label{completionULC} \label{simple-claim2}
Let $X\subseteq Y$ be dense in $Y$.
  \begin{enumerate} 
  \item If $X$ is ULC, then $Y$ is ULC as well (and $Y$ is a tight extension of $X$). In particular the completion of a ULC space is ULC.
  \item If $Y$ is ULC, then the following are equivalent:
 \begin{description}
      \item[(i)] $X$ is ULC
      \item[(ii)] $X$ is straight
      \item[(iii)] $Y$ is a tight extension of $X$
 \end{description}
\end{enumerate}
\end{lemma}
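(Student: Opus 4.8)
The plan is to prove the two items of Lemma~\ref{dense-super} essentially by transporting the $\delta$--$\eps$ data of ULC across the dense inclusion, and then closing the loop with the already-stated Theorem~\ref{str-dense}.

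\medskip

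\noindent\textit{Proof sketch.}
For item (1), assume $X$ is ULC and let $Y\supseteq X$ be a metric space in which $X$ is dense. Using Lemma~\ref{unif.l.c.} it suffices to produce, for each $\eps>0$, a $\delta>0$ so that every $y\in Y$ lies in a connected subset of $Y$ of diameter $<\eps$ containing $B^Y_\delta(y)$. First I fix $\eps>0$ and take $\delta'>0$ witnessing the ULC property of $X$ for $\eps/3$. Set $\delta=\delta'/4$ (the precise fraction is a routine matter). Given $y\in Y$ and any two points $p,q\in B^Y_\delta(y)$, I approximate $p,q$ by points $p',q'\in X$ with $d(p,p'),d(q,q')$ small; then $d_X(p',q')<\delta'$, so $p',q'$ lie in a connected set $S\subseteq X$ of $d_X$-diameter $<\eps/3$. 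The union of $S$ with the two short "bridges'' (which one can realize as, say, the closures in $Y$ of suitable approximating sequences, or more simply just argue that $\overline{S}^Y$ together with $p,q$ is connected since $p,q$ are in its closure) gives a connected subset of $Y$ of diameter $<\eps$ containing $p$ and $q$. Taking the union over all such pairs $p,q\in B^Y_\delta(y)$ of these connected sets, all of which meet a common point (e.g.\ a fixed approximation of $y$), yields a connected set $W_y$ with $B^Y_\delta(y)\subseteq W_y$ and $\diam W_y<\eps$ (with an honest constant accounting, $3\eps/3=\eps$). Hence $Y$ is ULC. Applying this with $Y=\widetilde X$ gives that the completion of a ULC space is ULC; the tightness clause then follows from item (2)(iii)$\Leftarrow$(i), proved below, or directly since a ULC $X$ is straight (it is ULC hence locally connected and straight by Theorem~\ref{Th_loc_con_vs_ULC}, or by the inclusion of ULC in straight spaces) and Theorem~\ref{str-dense} forces the extension to be tight.

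\medskip

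For item (2), assume $Y$ is ULC with $X$ dense in $Y$. The implication (i)$\Rightarrow$(ii) is just the fact that ULC spaces are straight (a ULC space is locally connected, so Theorem~\ref{Th_loc_con_vs_ULC} applies). The implication (ii)$\Leftrightarrow$(iii) is immediate from Theorem~\ref{str-dense}: since $Y$ is straight (being ULC), $X$ is straight iff the dense extension $X\subseteq Y$ is tight. It remains to prove (iii)$\Rightarrow$(i), i.e.\ that if the dense extension $X\subseteq Y$ is tight and $Y$ is ULC, then $X$ is ULC. Here I would fix $\eps>0$, pick $\delta>0$ witnessing ULC for $Y$ at scale $\eps$, and, given $x,x'\in X$ with $d_X(x,x')<\delta$, take a connected $C\subseteq Y$ with $x,x'\in C$ and $\diam C<\eps$. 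The subtle point is that $C\cap X$ need not be connected, so one cannot simply intersect with $X$. This is exactly where tightness enters: a connected set in $Y$ cannot be written as a disjoint union of two relatively closed nonempty pieces, and tightness controls how closures of a binary closed cover of $X$ behave in $Y$; I would argue that if $C\cap X$ split as a separated union $A\sqcup B$ with $x\in A$, $x'\in B$, then extending to a binary closed cover of $X$ and passing to $Y$-closures would violate \eqref{def_ttt} because $\overline{A}^Y$ and $\overline{B}^Y$ would have to meet inside $C$ (by connectedness of $C$) at a point not in $\overline{A\cap B}^Y=\overline{\emptyset}^Y=\emptyset$. Thus $C\cap X$ contains a connected subset joining $x$ and $x'$ of diameter $<\eps$, giving ULC of $X$.

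\medskip

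\noindent\textbf{Main obstacle.} The routine parts are the $\eps/3$ bookkeeping in (1) and the appeals to Theorem~\ref{str-dense}; these I expect to go through without trouble. The genuinely delicate step is (iii)$\Rightarrow$(i) in item (2): producing, inside $C\cap X$, an \emph{honestly connected} subset joining the two given points. The naive intersection $C\cap X$ can be badly disconnected, and one must exploit tightness in precisely the right form — essentially, that tightness prevents $X$ from "separating'' a connected superset in $Y$ — to repair this. I anticipate this is where the real work lies, and where one may instead prefer to route through the equivalence (ii)$\Leftrightarrow$(iii) plus a characterization of straightness, rather than a direct topological argument, if the direct argument gets unwieldy.
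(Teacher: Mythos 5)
This lemma is quoted by the paper from \cite{BDP+} and is not proved here, so there is no in-paper argument to compare with; judging your proposal on its own terms, the easy parts are fine (ULC $\Rightarrow$ straight via Theorem \ref{Th_loc_con_vs_ULC}, and (ii)$\Leftrightarrow$(iii) is immediate from Theorem \ref{str-dense} once $Y$ is known to be straight), but both of the places where you do real work have problems. In item (1), the two mechanisms you offer for attaching $p,q\in Y$ to the connected set $S\subseteq X$ are invalid: the closure in $Y$ of an approximating sequence is a convergent sequence with its limit, hence totally disconnected, and $p,q$ need not lie in $\overline{S}^{\,Y}$ at all, since $S$ only contains points $p',q'$ at small positive distance from them. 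This slip is repairable by the standard device: take full sequences $p'_k\to p$, $q'_k\to q$ in $X$, join $p'_1$ to every $p'_k$ and every $q'_k$ by connected sets of diameter $<\eps/3$ (all pairwise distances are $<\delta'$), and observe that the closure in $Y$ of the union of these sets is connected, contains $p$ and $q$, and has small diameter. So item (1) survives, but not with the justification you wrote.

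The genuine gap is (iii)$\Rightarrow$(i), which is the actual content of the lemma. Your separation argument assumes that if $C\cap X=A\sqcup B$ is a relative clopen splitting with $x\in A$, $x'\in B$, then connectedness of $C$ forces $\overline{A}^{\,Y}\cap\overline{B}^{\,Y}$ to meet $C$. That would require $C\subseteq\overline{C\cap X}^{\,Y}$, which fails in general: density of $X$ in $Y$ does not make points of a (typically nowhere dense) connected set $C$ limits of points of $C\cap X$ — e.g.\ $C$ can be an arc meeting $X$ only in $x$ and $x'$, in which case $A=\{x\}$, $B=\{x'\}$ and no clopen cover of $X$ violating \eqref{def_ttt} is produced. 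Moreover, even if you could rule out every such splitting, you would only have shown that $x$ and $x'$ lie in the same quasi-component of $C\cap X$, whereas ULC demands an honest connected set of small diameter; quasi-components need not coincide with components outside the compact setting. Your fallback of routing through (ii) does not help, because ``$X$ straight and dense in a ULC space $\Rightarrow X$ ULC'' is exactly the same difficulty under another name (within this paper it cannot be obtained from the product results either, since Corollary \ref{Thm_loc_conn} itself relies on this lemma). So the key implication remains unproven and needs a genuinely different argument, e.g.\ one that uses straightness of $X$ through the u-placed criterion of Theorem \ref{uplaced} to manufacture a non-u-placed closed cover from a putative failure of ULC.
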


 The next construction shows that the property ULC can be easily lost under passage to closed subspaces. 

\begin{example}\label{EmbedULC}  Let $(X,d)$ be a metric space. Then $X$ is homeomorphic to a closed subspace of a ULC space.
\end{example}

\begin{proof} We will construct a space $X'\supset X$ such that each pair of points $x,y\in X$ lie in a connected set $I_{x,y}\subset X'$ of
diameter $d(x,y)$. This is easy to do as follows. Fix a linear ordering of $X$ and for each pair of points $x<y$ in $X$ consider
a space $I_{x,y}$ isometric to a closed interval of $\R$ of length $d(x,y)$. Let $X'$ be the topological space $(X \cup
\bigcup_{x<y}I_{x,y})/E$ where $E$ is the equivalence relation on the disjoint union $X \cup \bigcup_{x<y}I_{x,y}$ which identifies
one of the extremes of $I_{x,y}$ with $x$ and the other with $y$. In this way $X$ is naturally identified with a subspace of $X'$
via $x\mapsto [x]$, where $[x]$ is the class of $x$ modulo $E$. Moreover $X$ is a closed subspace of $X'$ because $X'\setminus X$
is homeomorphic to a disjoint union of open intervals which are open subsets of $X'$. The metric on $X'$ is the biggest possible
compatible with the fact that the inclusion $X\subset X'$ is an isometry and that $I_{x,y}$ is isometric to an interval of $\R$ of
length $d(x,y)$. To finish the proof we show that $X'$ is ULC. Let $u,v \in X'$. Then for some $x,y, x',y'\in X$ we have $u\in
I_{x,y}$ and $v \in I_{x',y'}$ (with the natural identification of $I_{x,y}, I_{x',y'}$ as subsets of $X'$). The set $W = I_{x,y}
\cup I_{x',y'} \cup I_{x,x'} \cup I_{y,y'}$ is connected, contains $u$ and $v$, and has diameter $\leq d(x,y) + d(x',y') + d(x,x') +
d(y,y')$. A case analysis shows that $W$ contains a connected subset, still containing $u,v$, and of diameter $d(u,v)$. This
proves that $X'$ is ULC. \NB \end{proof}
  
\begin{definition} A sequence $(x_n)_{n\in \N}$ in a metric space $(X,d)$ is {\bf discrete} if it has no accumulation points 
in $X$, and it is {\bf uniformly discrete} if there is a non-zero lower bound to the set of all the distances $d(x_n,x_m)$ for 
$n\neq m$. Two sequences $(a_n)_{n\in \N}$ and $(b_n)_{n\in \N}$ are {\bf adjacent} if $d(a_n,b_n) \to 0$ for $n\to \infty$.
\end{definition}

\begin{example}\label{Ex_UC} \NB In terms of the above definition a space $X$ is UC iff for every pair $(a_n)_{n\in \N}$ and $(b_n)_{n\in \N}$ of adjacent
sequences in $X$, with $a_n\ne b_n$ for all $n\in \N$, there exists a (common) accumulation point in $X$. 
\end{example}

 According to \cite{BDP+}, a space $(X,d)$ is {\bf weakly uniformly locally connected} (WULC) if for each pair of {\it discrete} adjacent sequences $(a_n)_{n\in\N}$ and $(b_n)_{n\in\N}$ in $X$, there exists a sequence $(C_n)_{n\in\N}$ of connected subsets of $X$ and $k\in \N$ such that $\lim _{n\to \infty} \mbox{diam} C_n=0$, and $a_{n+k}\in C_n, b_{n+k}\in C_n$ for every $n\in\N$. It follows from from the definitions that ULC implies WULC.

Now we recall another notion of connectedness introduced in \cite{BDP+} weaker than WULC but strong enough to imply straightness. 
To this end we recall that the {\bf quasi-component} of a point $x\in X$ is the intersection of all clopen sets containing $x$.
Hence $x$ is in the same quasi-component of $y$ in $X$ if $x$ cannot be separated from $y$, i.e. for every partition $X = A \cup B$ with $A, B$ open, $x$ and $y$ lie both in $A$ or both in $B$. One can define a metric $\hat{d}$ as follows:

\begin{definition}\cite{BDP+}  Given a metric space $(X, d)$ and $x,y \in X$ we say that $I\subset X$ {\bf quasi-connects} $x$ and $y$ if $x,y$ belong to $I$ and are in the
 same quasi-component of $I$. We define $\hat{d}(x,y)$ as the  minimum between $1$ and the infimum of the diameters of the 
 subsets $I$ of $X$ which quasi-connect $x$ and $y$. So $\hat{d}(x,y) =1$, if there is  no set $I$ quasi-connecting $x$ and $y$.
\end{definition}

 The next definition introduces a notion of connectedness between WULC and straightness.

\begin{definition}\label{deadend}\cite{BDP+}  A metric space $(X,d)$ is {\bf approximatively locally connected}  (ALC) if for each pair of {\it discrete} adjacent sequences  $(a_n)_{n\in\N}$ and $(b_n)_{n\in\N}$, $\lim _{n\to \infty}  \hat d (a_n ,b_n) =0$. \end{definition}

Clearly, every compact space is ALC since it does not contain discrete sequences.


\begin{theorem} \label{alc-stra} (\cite{BDP+}) $UC$ $\Rightarrow$ WULC $\Rightarrow$ ALC $\Rightarrow$ straight.\end{theorem}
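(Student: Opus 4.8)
The proof naturally splits into the three implications, of which only the last requires real work; I will treat them in order.

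For $UC\Rightarrow WULC$: using the characterization of UC spaces recorded in Example \ref{Ex_UC}, I first observe that a UC space cannot contain a pair of adjacent sequences $(a_n),(b_n)$ with $a_n\ne b_n$ for every $n$ and with no common accumulation point. Hence, given an arbitrary pair of \emph{discrete} adjacent sequences $(a_n),(b_n)$ in a UC space, the set $S=\{n: a_n\ne b_n\}$ must be finite (otherwise the corresponding subsequences would form a forbidden pair, since a subsequence of a discrete sequence is still discrete). Choosing $k\in\N$ with $a_n=b_n$ for all $n\ge k$ and putting $C_n=\{a_{n+k}\}$ gives connected sets of diameter $0$ witnessing WULC. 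For $WULC\Rightarrow ALC$: given discrete adjacent $(a_n),(b_n)$, take the connected sets $C_n$ and the integer $k$ provided by WULC. A connected set coincides with its unique quasi-component, so $C_n$ quasi-connects $a_{n+k}$ and $b_{n+k}$, whence $\hat d(a_{n+k},b_{n+k})\le\diam C_n\to0$; reindexing yields $\hat d(a_n,b_n)\to0$, i.e. ALC.

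For $ALC\Rightarrow$ straight I will use the criterion of Theorem \ref{uplaced}: it suffices to show that every closed binary cover $X=C^+\cup C^-$ is u-placed. Assume not; then there are $\eps>0$ and sequences $x_n\in C^+_\eps$, $y_n\in C^-_\eps$ with $d(x_n,y_n)\to0$. By definition $d(x_n,C^+\cap C^-)\ge\eps$ and $d(y_n,C^+\cap C^-)\ge\eps$; in particular $x_n\in C^+\setminus C^-$ and $y_n\in C^-\setminus C^+$, so $x_n\ne y_n$. Next I claim $(x_n)$, and hence $(y_n)$, is discrete: an accumulation point $p$ of $(x_n)$ would lie in the closed set $C^+$, and, since $d(x_n,y_n)\to0$ forces the $y$'s to accumulate at $p$ as well, also in the closed set $C^-$, so $p\in C^+\cap C^-$, contradicting $d(x_n,C^+\cap C^-)\ge\eps$. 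Now ALC applies to the discrete adjacent pair $(x_n),(y_n)$ and gives $\hat d(x_n,y_n)\to0$; choose, for each large $n$, a set $I_n$ quasi-connecting $x_n$ and $y_n$ with $\diam I_n\to 0$. Once $\diam I_n<\eps$, every point of $I_n$ is within $\diam I_n$ of $x_n$ and hence at distance $\ge\eps-\diam I_n>0$ from $C^+\cap C^-$, so $I_n\cap C^+\cap C^-=\emptyset$. Then $I_n\cap C^+$ and $I_n\cap C^-$ are disjoint, cover $I_n$ (as $X=C^+\cup C^-$), are closed in $I_n$, and contain $x_n$ and $y_n$ respectively; thus they form a clopen partition of $I_n$ separating $x_n$ from $y_n$ --- contradicting that $I_n$ quasi-connects them. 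Hence every closed binary cover is u-placed and $X$ is straight.

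The two preliminary implications are routine; the essential point, and the step I expect to require the most care, is the reduction in $ALC\Rightarrow$ straight to a \emph{discrete} adjacent pair (so that ALC is applicable) together with the observation that $\diam I_n\to0$ pushes the quasi-connecting sets $I_n$ off the overlap $C^+\cap C^-$, which is exactly what turns the trace of the cover on $I_n$ into a genuine clopen partition and produces the contradiction. One should also dispose of the degenerate cases --- $C^+_\eps$ or $C^-_\eps$ empty, or $C^+\cap C^-=\emptyset$ --- which are handled by the usual convention that the distance to the empty set is positive, so that no separate argument is needed.
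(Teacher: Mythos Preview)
The paper does not supply a proof of this theorem; it is quoted from \cite{BDP+} as background. Your argument is correct in all three steps: the reduction of UC to eventually equal discrete adjacent sequences via Example~\ref{Ex_UC}, the trivial passage from connected witnesses to quasi-connecting witnesses, and the contradiction for $ALC\Rightarrow$ straight obtained by trapping a small quasi-connecting set $I_n$ away from $C^+\cap C^-$ so that the trace of the cover becomes a clopen partition of $I_n$. The only point worth a remark is the discreteness step: strictly speaking you should pass to a subsequence along which $x_{n_k}\to p$ before invoking $d(x_{n_k},C^+\cap C^-)\ge\eps$, which you do implicitly. The degenerate cases are indeed absorbed by the convention $d(A,\emptyset)=+\infty$.
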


The next statement shows the importance of the ALC property:

\begin{corollary}\label{coro_comp_try} A complete space $X$ is straight if and only if it is ALC. \end{corollary}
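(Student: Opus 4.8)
The plan is to prove the two implications separately. One of them is free: by Theorem~\ref{alc-stra} every ALC metric space (complete or not) is straight, so the only content is the converse statement that \emph{a complete straight space is ALC}. I would prove this by contraposition: starting from a failure of ALC I will build a closed binary cover of $X$ that is not u-placed, which by Theorem~\ref{uplaced} contradicts straightness.

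So suppose $X$ is complete and not ALC, and fix discrete adjacent sequences $(a_n)$, $(b_n)$ with $a_n\neq b_n$ and $\hat d(a_n,b_n)\not\to 0$. Passing to a subsequence I may assume the $a_n$ are pairwise distinct and $\hat d(a_n,b_n)\geq 3\eps_0$ for a fixed $\eps_0>0$ (and $3\eps_0\leq 1$). The first point is to upgrade discreteness to \emph{uniform} discreteness: in a complete space every sequence with distinct terms and no accumulation point has a uniformly discrete subsequence, for otherwise the standard ``keep halving the radius'' covering argument inside the set $\{a_n\}$ produces a Cauchy subsequence with distinct terms whose (existing) limit is an accumulation point. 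Passing to such a subsequence, shrinking $\eps_0$ so that $2\eps_0$ is below the separation constant, and discarding finitely many terms using $d(a_n,b_n)\to 0$, I may assume the open balls $B_n:=B_{\eps_0}(a_n)$ are pairwise disjoint and $a_n,b_n\in B_n$ for all $n$.

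Now I localize the non-quasi-connectedness. For each $n$ the set $I_n:=\overline{B_n}$ has $\diam I_n\leq 2\eps_0<3\eps_0\leq\hat d(a_n,b_n)$, so $I_n$ does not quasi-connect $a_n$ and $b_n$; since $a_n,b_n\in I_n$, they lie in distinct quasi-components of $I_n$, hence there is a set $A_n$ clopen in $I_n$ with $a_n\in A_n\not\ni b_n$. Intersecting the partition $I_n=A_n\sqcup(I_n\setminus A_n)$ with the open ball $B_n$ yields a partition $B_n=A_n^{\circ}\sqcup B_n^{\circ}$ into sets clopen in $B_n$ and open in $X$, with $a_n\in A_n^{\circ}$ and $b_n\in B_n^{\circ}$. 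Put $U:=\bigcup_n B_n^{\circ}$, which is open in $X$, and set $C^-:=\overline U$ and $C^+:=X\setminus U$; these are closed and cover $X$, with $a_n\in C^+$ (since $a_n\notin B_n^{\circ}$ and $a_n\notin B_m^{\circ}\subseteq B_m$ for $m\neq n$ by disjointness of the balls) and $b_n\in B_n^{\circ}\subseteq U\subseteq C^-$.

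It remains to check that this cover is not u-placed, and the crucial estimate is $d(a_n,\partial U)\geq\eps_0/2$ for all $n$. Indeed, suppose $z\in\partial U=\overline U\setminus U$ with $d(z,a_n)<\eps_0/2$, and write $z=\lim_k u_k$ with $u_k\in B_{m_k}^{\circ}$. If $m_k=n$ for infinitely many $k$, then $z\in\overline{B_n^{\circ}}\cap B_n=B_n^{\circ}\subseteq U$ (recall $B_n^{\circ}$ is closed in $B_n$), contradicting $z\notin U$; otherwise $m_k\neq n$ for all large $k$, so $u_k\in B_{m_k}$ is disjoint from $B_n$, while $d(u_k,a_n)\to d(z,a_n)<\eps_0$ forces $u_k\in B_n$ eventually, again a contradiction. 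Since also $d(a_n,b_n)\to 0$, for all large $n$ we get $d(b_n,\partial U)\geq\eps_0/4$ as well. Hence, with $\eps:=\eps_0/4$ and $C^+\cap C^-=\partial U$, the points $a_n\in C^+_{\eps}$ and $b_n\in C^-_{\eps}$ satisfy $d(a_n,b_n)\to 0$, so $d(C^+_{\eps},C^-_{\eps})=0$ and the cover is not u-placed (if instead $\partial U=\emptyset$, the same pair gives $d(C^+,C^-)=0$); either way $X$ is not straight by Theorem~\ref{uplaced}, a contradiction. I expect this last metric estimate to be the delicate point: one must ensure that closing up the infinite union $\bigcup_n B_n^{\circ}$ introduces no boundary points near any $a_n$, which is exactly where the uniform separation of the balls $B_n$ and the absence of accumulation points of $(a_n)$ are both used; the extraction of a uniformly discrete subsequence is the only place completeness enters, and it too needs care but is routine.
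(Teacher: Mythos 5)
Your argument is correct, but note that the paper itself offers no proof of Corollary~\ref{coro_comp_try}: it is recalled from \cite{BDP+} as part of the background (the characterization of complete straight spaces), so there is no in-paper proof to match; what you have written is a self-contained proof of the nontrivial direction. Your route is the natural one: ALC $\Rightarrow$ straight comes free from Theorem~\ref{alc-stra}, and for the converse you negate ALC, localize the failure of quasi-connectedness inside small balls around the $a_n$, and assemble the resulting relative clopen pieces into a closed binary cover that is not u-placed, contradicting Theorem~\ref{uplaced}. The two delicate points are handled correctly. First, the extraction of a uniformly discrete subsequence from a discrete sequence of distinct terms is exactly where completeness is needed, and your sketched dichotomy does work: either at some scale $2^{-k}$ every point of the set has only finitely many neighbours within $2^{-k}$ (greedy extraction then gives a $2^{-k}$-separated infinite subsequence), or one gets a nested chain of infinite subsets of diameters tending to $0$, hence a Cauchy sequence of distinct terms whose limit (by completeness) would be an accumulation point, contradicting discreteness. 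Second, the estimate $d(a_n,\partial U)\ge \eps_0/2$ is proved correctly: the case distinction on whether the approximating points come from $B_n^{\circ}$ (use that $B_n^{\circ}$ is relatively clopen in $B_n$, so the limit would land in $U$) or from other balls (use pairwise disjointness of the $B_{\eps_0}(a_n)$) is exactly the right use of the uniform separation. Minor cosmetic points only: you should say one word about why the $a_n$ may be taken pairwise distinct (a value repeated infinitely often would be a cluster point of the sequence), and observe that passing to subsequences preserves discreteness, adjacency and the lower bound on $\hat d$; both are immediate.
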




Even if we are not going to use it in the sequel, let us note that if a dense  subspace $X$ of a space $ Y$ is ALC, then $Y$ itself is ALC \cite{BDP+}.
In particular, the completion of an ALC space is ALC.


\section{First properties related to products}

We show in this section and the next one the important role played by products in questions related to straightness. To mention at least one group of results, the behavior of ULC,  ALC and straightness with respect to finite powers is treated in Lemma~\ref{prod_ULC}, Proposition~\ref{cor-prec-power} and Corollary~\ref{powers} respectively.

\begin{definition} A u.c. map $f:X\to Y$ of metric spaces is said to  {\bf allow lifting of adjacent sequences} if for every pair of adjacent sequences $(x_n)$ and $(y_n)$ in $Y$, there exist 
subsequences $(x_{n_k})$ and $(y_{n_k})$ and two adjacent sequences $x_k'$ and $y_k'$ in $X$ such that $f(x_k')=x_{n_k}$ and $f(y_k')=y_{n_k}$ for every $k$.
\end{definition}

One can easily prove:

\begin{lemma}\label{lifting} Let $f:X\to Y$ be a map of metric spaces that allows lifting of adjacent sequences. Then a function $g:Y\to \R$ is u.c. iff the function
$g\circ f$ is u.c.
\end{lemma}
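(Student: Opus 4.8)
\textbf{Proof plan for Lemma \ref{lifting}.}

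The plan is to prove the two implications separately, the forward one being trivial. If $g$ is u.c., then $g\circ f$ is a composition of uniformly continuous maps, hence u.c.; this needs no hypothesis on $f$. So the content is the converse: assuming $g\circ f$ is u.c., deduce that $g$ is u.c.

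For the converse I would argue by contradiction. Suppose $g$ is \emph{not} u.c. on $Y$. Then there is an $\eps>0$ and sequences $(x_n)$, $(y_n)$ in $Y$ with $d_Y(x_n,y_n)\to 0$ but $|g(x_n)-g(y_n)|\geq \eps$ for all $n$. The sequences $(x_n)$ and $(y_n)$ are adjacent by construction, so the hypothesis that $f$ allows lifting of adjacent sequences gives subsequences $(x_{n_k})$, $(y_{n_k})$ together with adjacent sequences $(x_k')$, $(y_k')$ in $X$ with $f(x_k')=x_{n_k}$ and $f(y_k')=y_{n_k}$. Since $(x_k')$ and $(y_k')$ are adjacent in $X$, i.e. $d_X(x_k',y_k')\to 0$, and $g\circ f$ is u.c., we get $|g(f(x_k'))-g(f(y_k'))|\to 0$; but this quantity equals $|g(x_{n_k})-g(y_{n_k})|\geq \eps$, a contradiction. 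Hence $g$ is u.c.

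There is essentially no obstacle here: the statement is a direct unpacking of the definitions, and the only point requiring a moment's care is the direction of the argument — one must feed the \emph{given} (would-be) witnessing sequences in $Y$ into the lifting property to obtain witnessing sequences in $X$, rather than the other way around, and then use uniform continuity of $g\circ f$ to reach the contradiction. One can equivalently phrase it without contradiction: given any pair of adjacent sequences in $Y$, lift an arbitrary subsequence, apply u.c. of $g\circ f$ along the lifted pair, and conclude $|g(x_{n_k})-g(y_{n_k})|\to 0$; since this holds for every subsequence of every adjacent pair, a standard subsequence argument yields $|g(x_n)-g(y_n)|\to 0$ for the original pair, which is exactly uniform continuity of $g$.
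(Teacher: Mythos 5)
Your proof is correct, and it is exactly the routine verification the paper has in mind: the paper offers no proof at all (the lemma is prefaced only by ``One can easily prove''), and your contradiction argument for the nontrivial direction --- feed the would-be witnessing adjacent pair in $Y$ into the lifting property and then use uniform continuity of $g\circ f$ on the lifted adjacent pair in $X$ --- together with the subsequence reformulation, is the intended argument. One small caveat: the forward direction is \emph{not} hypothesis-free on $f$; it uses that $f$ is uniformly continuous (if $f$ were merely continuous, $g\circ f$ could fail to be u.c.\ even for u.c.\ $g$), but this is harmless here because u.c.-ness of $f$ is built into the paper's definition of a map that allows lifting of adjacent sequences, and your phrase ``composition of uniformly continuous maps'' already invokes it.
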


\begin{example}\label{Example1} There are two relevant instances of maps allowing lifting of adjacent sequences: (i) projections in products; (ii) continuous
    open homomorphisms between metric topological groups \cite{DP_STG}.
\end{example}

\begin{lemma}\label{lemma_imag}
If $f:X\to Y$ is a map of metric spaces that allows lifting of adjacent
sequences and $X$ is straight, then also $Y$ is straight.
\end{lemma}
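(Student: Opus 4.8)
The plan is to reduce straightness of $Y$ to straightness of $X$ by pulling a closed binary cover of $Y$ back along $f$ and then invoking Lemma~\ref{lifting}. First I would recall the characterization of straightness by binary closed covers stated above: it suffices to fix an arbitrary closed cover $Y = D^+\cup D^-$ and an arbitrary $g\in C(Y)$ whose restrictions $g\restriction D^+$ and $g\restriction D^-$ are u.c., and to deduce that $g$ itself is u.c.

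Next I would set $C^+ = f^{-1}(D^+)$ and $C^- = f^{-1}(D^-)$. Since $f$ is u.c., hence continuous, these are closed subsets of $X$, and they cover $X$ because $D^+\cup D^- = Y$. The composite $g\circ f$ lies in $C(X)$, and its restriction to $C^+$ factors as $g\restriction D^+$ precomposed with the restriction $f\restriction C^+\colon C^+\to D^+$, which is u.c. as a restriction of the u.c.\ map $f$; a composition of u.c.\ maps being u.c., $(g\circ f)\restriction C^+$ is u.c., and likewise $(g\circ f)\restriction C^-$. Applying straightness of $X$ to the closed cover $X = C^+\cup C^-$ then gives that $g\circ f$ is u.c.\ on all of $X$.

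Finally, because $f$ allows lifting of adjacent sequences, Lemma~\ref{lifting} says $g$ is u.c.\ on $Y$ precisely when $g\circ f$ is u.c.\ on $X$; hence $g$ is u.c. As $D^+,D^-$ and $g$ were arbitrary, $Y$ is straight.

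I do not expect a serious obstacle here: the only points needing care are that uniform continuity of $f$ yields continuity (so preimages of closed sets are closed) and the factorization of $(g\circ f)\restriction C^\pm$ through $f\restriction C^\pm$, both routine; the substantive content is already packaged in Lemma~\ref{lifting}. An alternative, slightly longer route would use the u-placed criterion of Theorem~\ref{uplaced}: given sequences $x_n\in D^+$, $y_n\in D^-$ in $Y$ with $d(x_n,y_n)\to 0$ but $d(x_n, D^+\cap D^-)\not\to 0$, lift a subsequence to adjacent sequences in $X$, apply u-placedness of the closed cover $f^{-1}(D^+), f^{-1}(D^-)$ of the straight space $X$, and push the resulting points of $f^{-1}(D^+)\cap f^{-1}(D^-)$ forward by the u.c.\ map $f$ to reach a contradiction.
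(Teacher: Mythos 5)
Your argument is correct and is essentially identical to the paper's proof: pull the closed binary cover of $Y$ back along the (u.c., hence continuous) map $f$, note that the restrictions of $g\circ f$ to the preimages are u.c.\ as compositions of u.c.\ maps, apply straightness of $X$ to get that $g\circ f$ is u.c., and conclude via Lemma~\ref{lifting} that $g$ is u.c. No further changes are needed.
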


\begin{proof} Assume $Y=F^+\cup F^-$ is a closed binary cover of $Y$. Then $X=f^{-1}(F^+)\cup f^{-1}(F^-)$ is a closed binary cover of
    $X$. Now if $g:Y\to \R$ is a continuous function such that $g|_{F^+}$ and $g|_{F^-}$ are u.c., then $f_1=g\circ f:X\to \R$ is continuous and $f_1|_{f^{-1}(F^+)}$ and $f_1|_{f^{-1}(F^-)}$ are u.c. as compositions of u.c. functions. Then $f_1$ is u.c. since $X$ is straight. Now $g$ is u.c. by Lemma \ref{lifting}.
\end{proof}

The next corollaries follow from Lemma \ref{lemma_imag} and Example
\ref{Example1}.

\begin{corollary}\label{Coro1} Let $X, Y$ be metric spaces. If $X\times Y$ is straight, then both $X$ and $Y$ are straight.
\end{corollary}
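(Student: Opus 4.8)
The plan is to obtain this as an immediate consequence of Lemma~\ref{lemma_imag}, applied to the two coordinate projections, once we know that projections allow lifting of adjacent sequences (Example~\ref{Example1}(i)). So the one point I would spell out is why $\pi_X\colon X\times Y\to X$ has this property. First, $\pi_X$ is u.c.: with the sum metric on $X\times Y$ one has $d_X(\pi_X(p),\pi_X(q))\le d(p,q)$. Next, assuming $Y\ne\emptyset$, fix a point $y_0\in Y$. If $(a_n)$ and $(b_n)$ are adjacent sequences in $X$, put $a_n'=(a_n,y_0)$ and $b_n'=(b_n,y_0)$ in $X\times Y$; then $d(a_n',b_n')=d_X(a_n,b_n)\to 0$, so $(a_n')$ and $(b_n')$ are adjacent, while $\pi_X(a_n')=a_n$ and $\pi_X(b_n')=b_n$. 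Thus the defining condition holds even with $n_k=k$, i.e. without passing to a subsequence.

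With this in hand the corollary is short. If $X\times Y=\emptyset$ there is nothing to prove; otherwise both factors are nonempty, so the previous paragraph applies to $\pi_X$ and, symmetrically, to $\pi_Y$. Since $X\times Y$ is straight and $\pi_X$ allows lifting of adjacent sequences, Lemma~\ref{lemma_imag} gives that $X$ is straight; exchanging the roles of $X$ and $Y$ gives that $Y$ is straight.

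I do not expect any genuine obstacle: all the substance is already packaged in Lemma~\ref{lemma_imag} (and, through it, in Lemma~\ref{lifting}), and the only thing left to verify is the elementary lifting property of projections, for which fixing a basepoint in the complementary factor suffices. If one prefers not to regard the empty space as straight, note simply that a straight product is in particular nonempty, which forces both factors to be nonempty, and then the argument above needs no case distinction.
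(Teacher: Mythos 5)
Your proposal is correct and follows exactly the paper's route: the corollary is derived from Lemma~\ref{lemma_imag} applied to the coordinate projections, which allow lifting of adjacent sequences as stated in Example~\ref{Example1}(i). Your explicit verification of the lifting property (via a fixed basepoint in the complementary factor) is just the detail the paper leaves to the reader.
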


A subspace $Y$ of a metric space $X$ is said to be a {\bf uniform retract} of $X$ if there exists a u.c. map $r:X\to Y$ such that $r\restriction_Y=id_Y$.

\begin{corollary}\label{u.retract} Uniform retracts of a straight space are straight. \end{corollary}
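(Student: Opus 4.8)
The plan is to derive this as an immediate consequence of Lemma~\ref{lemma_imag} applied to the retraction $r\colon X\to Y$ itself. By that lemma it is enough to verify that $r$ is a u.c.\ map that allows lifting of adjacent sequences; the first property holds by hypothesis, and the second will be essentially automatic, precisely because $r$ fixes $Y$ pointwise.

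In detail, I would argue as follows. Let $(x_n)_{n\in\N}$ and $(y_n)_{n\in\N}$ be adjacent sequences in $Y$. Since $Y$ carries the metric induced from $X$, each $x_n,y_n$ is also a point of $X$ and $d_X(x_n,y_n)=d_Y(x_n,y_n)\to 0$, so $(x_n)$ and $(y_n)$ are adjacent as sequences in $X$ too. Moreover $r(x_n)=x_n$ and $r(y_n)=y_n$ for all $n$, because $r\restriction_Y=id_Y$. Hence the sequences $x_n'=x_n$ and $y_n'=y_n$ (with no passage to a subsequence needed) witness that $r$ allows lifting of adjacent sequences.

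Since $X$ is straight and $r\colon X\to Y$ is a u.c.\ map allowing lifting of adjacent sequences, Lemma~\ref{lemma_imag} yields that $Y$ is straight, as desired. I do not expect any genuine obstacle here: adjacency of sequences is a purely metric condition, hence preserved when one regards a subspace as sitting inside the ambient space, and all the substantive work has already been carried out in Lemmas~\ref{lifting} and~\ref{lemma_imag}. (Alternatively, one could argue directly via the u-placed criterion of Theorem~\ref{uplaced}: given a closed binary cover $Y=F^+\cup F^-$, pass to the closed binary cover $X=r^{-1}(F^+)\cup r^{-1}(F^-)$, invoke u-placedness there, and transport the conclusion back along $r$, using that $r$ restricts to the identity on $Y$; but routing through Lemma~\ref{lemma_imag} is shorter.)
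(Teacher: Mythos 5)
Your argument is correct and is exactly the paper's own proof: the paper also notes that a uniform retraction allows lifting of adjacent sequences (since it fixes $Y$ pointwise) and then invokes Lemma~\ref{lemma_imag}. You have merely spelled out the lifting verification in more detail, which is fine.
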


\begin{proof}
    Let $r:X\to Y$ be a uniform retraction. Then $r$ allows lifting of adjacent sequences so that Lemma \ref{lemma_imag} applies.
\end{proof}

Here {\em uniform retract} cannot be replaced by the weaker property {\em $C_u$-embedded subspace} (i.e., a subspace $Y$ of $X$
such that every u.c. $f:Y\to \R$ can be extended to a u.c. function $X\to \R$). Take $X=\R_+\times \R$ and $Y$ = the two branches of
the hyperbola $\pm xy=1$ in $X$. 

The next corollary follows directly from Corollary \ref{u.retract} since uniformly clopen subspaces are uniform retracts. Moreover, each clopen proper
subset of a straight space must have a positive distance of its complement (see
Corollary~\ref{clopen-str}), hence each clopen subset of a straight space is
uniformly clopen.

\begin{corollary}\label{clopen} Clopen subspaces of straight spaces are straight.
\end{corollary}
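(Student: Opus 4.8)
The plan is to deduce Corollary~\ref{clopen} directly from Corollary~\ref{u.retract} together with Corollary~\ref{clopen-str}, exactly as the paragraph preceding the statement suggests. Let $X$ be straight and let $H\subseteq X$ be clopen; we must show $H$ is straight. If $H=X$ there is nothing to prove, so assume $H$ is a proper clopen subset. First I would invoke Corollary~\ref{clopen-str}: since $X$ is straight and $H$ is a proper clopen subset, $\dist(H, X\setminus H) = \delta > 0$.

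Next I would exhibit a uniform retraction $r\colon X\to H$. The natural candidate is the map that is the identity on $H$ and sends every point of $X\setminus H$ to a single fixed point $h_0\in H$ (assuming $H\neq\emptyset$; the case $H=\emptyset$ is trivial since the empty space is vacuously straight). This $r$ is continuous because $H$ and $X\setminus H$ are both open, and $r\restriction_H = \mathrm{id}_H$. To check uniform continuity, fix $\eps>0$ and take $\eta = \min\{\eps,\delta\}$. If $d(x,y)<\eta$ then $x,y$ cannot lie one in $H$ and the other in $X\setminus H$ (that would force $d(x,y)\geq\delta$), so either both are in $H$, in which case $d(r(x),r(y)) = d(x,y) < \eps$, or both are in $X\setminus H$, in which case $r(x)=r(y)=h_0$ and $d(r(x),r(y))=0<\eps$. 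Hence $r$ is a uniform retraction, $H$ is a uniform retract of $X$, and Corollary~\ref{u.retract} gives that $H$ is straight.

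There is essentially no obstacle here; the only subtlety worth a word is the degenerate case. The argument above needs a point $h_0\in H$, so if $H=\emptyset$ one notes separately that the empty metric space is straight (every condition in the definition of straightness is vacuous). One could alternatively package the whole argument through the observation already made in the text — that a clopen proper subset of a straight space is uniformly clopen, and uniformly clopen subspaces are uniform retracts — but spelling out the retraction $r$ makes the proof self-contained.

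\begin{proof}
If $H=\emptyset$ or $H=X$ the conclusion is trivial, so assume $H$ is a nonempty proper clopen subset of $X$ and fix $h_0\in H$. By Corollary~\ref{clopen-str}, $\delta:=\dist(H,X\setminus H)>0$. Define $r\colon X\to H$ by $r\restriction_H=\mathrm{id}_H$ and $r(x)=h_0$ for $x\in X\setminus H$. Since $H$ and $X\setminus H$ are both open, $r$ is continuous. Given $\eps>0$, put $\eta=\min\{\eps,\delta\}$; if $d(x,y)<\eta$ then $x,y$ lie on the same side of the partition $X=H\cup(X\setminus H)$, whence $d(r(x),r(y))\le d(x,y)<\eps$ in either case. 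Thus $r$ is a u.c. retraction, so $H$ is a uniform retract of $X$, and $H$ is straight by Corollary~\ref{u.retract}.
\end{proof}
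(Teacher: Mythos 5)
Your proof is correct and follows the same route as the paper: positive distance between $H$ and $X\setminus H$ via Corollary~\ref{clopen-str}, hence $H$ is a uniform retract, and Corollary~\ref{u.retract} finishes. The only difference is that you spell out the retraction explicitly (and handle the degenerate cases), which the paper leaves implicit in the remark that uniformly clopen subspaces are uniform retracts.
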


The ULC spaces form a class of straight spaces stable under products:

\begin{lemma}\label{prod_ULC}
A product $X\times Y$ is ULC if and only if both $X$ and $Y$ are ULC.
\end{lemma}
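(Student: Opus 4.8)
The plan is to prove both implications directly from the characterization of ULC given in Lemma~\ref{unif.l.c.}, which describes ULC in terms of the existence, for each $\eps>0$, of a uniform radius $\delta>0$ and connected open sets $W_x$ with $B_\delta(x)\subseteq W_x\subseteq B_\eps(x)$ for every $x$.

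For the ``only if'' direction I would observe that each projection $\pi_X\colon X\times Y\to X$ is a uniform retraction (fix a point $y_0\in Y$ and send $(x,y)\mapsto (x,y_0)$, or just project), hence by Corollary~\ref{u.retract} and Lemma~\ref{lemma_imag} straightness — and more to the point, ULC — of a factor follows from ULC of the product. More concretely, given $\eps>0$, apply the ULC property of $X\times Y$ to $\eps$ to obtain $\delta>0$; then any two points of $X$ at distance $<\delta$ give rise (by fixing a common second coordinate) to two points of $X\times Y$ at distance $<\delta$ in the sum metric, which lie in a connected subset of $X\times Y$ of diameter $<\eps$; its image under the projection $\pi_X$ is connected in $X$ and has diameter $<\eps$ since the projection is $1$-Lipschitz for the sum metric. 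Symmetrically for $Y$. This shows both factors are ULC.

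For the ``if'' direction, assume $X$ and $Y$ are ULC and fix $\eps>0$. Using the sum metric $d((x,y),(x',y'))=d_X(x,x')+d_Y(y,y')$, it suffices to find $\delta>0$ so that any two points at distance $<\delta$ lie in a connected set of diameter $<\eps$. Apply the ULC characterization of $X$ to $\eps/2$ to get $\delta_X>0$ and connected open sets $W_x^X$ with $B_{\delta_X}(x)\subseteq W_x^X\subseteq B_{\eps/2}(x)$, and similarly get $\delta_Y>0$ and $W_y^Y$ for $Y$ with target radius $\eps/2$. Set $\delta=\min\{\delta_X,\delta_Y\}$. If $d((x,y),(x',y'))<\delta$ then $d_X(x,x')<\delta\le\delta_X$ and $d_Y(y,y')<\delta\le\delta_Y$, so $x'\in W_x^X$ and $y'\in W_y^Y$. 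The set $W_x^X\times W_y^Y$ is connected (a product of two connected sets), contains both $(x,y)$ and $(x',y')$, and has diameter (in the sum metric) at most $\diam W_x^X+\diam W_y^Y\le \eps/2+\eps/2=\eps$. Hence $X\times Y$ is ULC.

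I do not expect any real obstacle here: the argument is essentially bookkeeping with the sum metric and the elementary fact that products of connected sets are connected. The only point requiring a word of care is the choice of metric on the product — the statement is about uniform notions, so one should note that all the standard product metrics (sum, max, Euclidean) are uniformly equivalent, and it is enough to verify ULC for one of them, say the sum metric, which is what makes the diameter estimate additive and hence transparent. One could also phrase the ``only if'' direction more cheaply by invoking Lemma~\ref{dense-super}(1) together with the fact that ULC is inherited by factors via the retraction argument, but the direct $\eps$-$\delta$ computation above is self-contained and symmetric.
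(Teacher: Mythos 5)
Your proof is correct and, for the ``if'' direction, it is essentially the paper's argument: the paper also takes connected witnesses $C_n$ in $X$ and $B_n$ in $Y$ and uses that $C_n\times B_n$ is connected with small diameter in the sum metric (phrased with adjacent sequences rather than an explicit $\varepsilon$--$\delta$ computation). For the ``only if'' direction you diverge slightly, and in a pleasant way: the paper deduces straightness of the factors from Corollary~\ref{u.retract} and then invokes Theorem~\ref{Th_loc_con_vs_ULC} (straight $+$ locally connected $\Rightarrow$ ULC, a nontrivial result quoted from an earlier paper), whereas your direct argument --- embed two close points of $X$ on a common slice, take the connected witness in $X\times Y$, and push it down by the $1$-Lipschitz projection --- is self-contained and avoids that machinery; this is a genuinely more elementary route for that half. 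One small bookkeeping slip: from $W^X_x\subseteq B_{\varepsilon/2}(x)$ you can only conclude $\diam W^X_x\le\varepsilon$, not $\le\varepsilon/2$, so as written your product set has diameter at most $2\varepsilon$; either start from $\varepsilon/4$ in Lemma~\ref{unif.l.c.}, or apply the original two-point form of the ULC definition with $\varepsilon/2$ per factor (as the paper implicitly does), to recover the bound $<\varepsilon$.
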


\begin{proof}    If the product $X\times Y$ is ULC, then by Corollary \ref{u.retract} and Theorem \ref{Th_loc_con_vs_ULC} $X$ and $Y$ are ULC (as local connectedness
is preserved under the projections of the product). On the other
    hand, assume that $a_n=(x_n,x_n')\in X\times Y$ and $b_n=(y_n,y_n')\in X\times Y$ are adjacent sequences in $X\times Y$,
    i.e., $d(a_n,b_n)\to 0$. Find connected sets $C_n$ and $B_n$ in $X$
    and $Y$ respectively, containing $\{x_n, x_n'\}$ and $\{y_n, y_n'\}$ respectively, with $\diam (B_n)\to 0$ and $\diam (C_n) \to 0$. Then
    $C_n\times B_n$ is connected and $\diam (C_n \times B_n) \to 0$, witnessing that $X\times Y$ is ULC.
\end{proof}

We conclude the section proving a fact (Lemma~\ref{claim2} below) which provides a wide supply of tight extensions. We shall prove a
more general result in Theorem ~\ref{NewTh}, nevertheless, we prefer to give a direct (shorter) proof in this particular case.

\if
Lemma~\ref{claim2} says that tight extensions are even preserved by products
when the assumptions of Lemma~\ref{simple-claim2} are satisfied.
This should be compared with Lemma~\ref{secret}.
\fi

\begin{lemma} \label{claim2} Let $Y$ be a metric space and let $X\subseteq Y$ be dense in $Y$.
Suppose $X$ is ULC. Then for any metric space $Z$, $Y\times Z$ is a tight extension of $X\times Z$.
\end{lemma}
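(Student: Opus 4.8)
The plan is to verify the tightness condition \eqref{def_ttt} directly for an arbitrary closed binary cover $X\times Z = F^+ \cup F^-$. Write $W = Y\times Z$; I need to show $\overline{F^+}^W \cap \overline{F^-}^W = \overline{F^+\cap F^-}^W$. The inclusion $\supseteq$ is automatic, so the work is in the reverse inclusion: given a point $(y,z) \in \overline{F^+}^W \cap \overline{F^-}^W$, I must produce, for every $\eps > 0$, a point of $F^+\cap F^-$ within $\eps$ of $(y,z)$. Equivalently, by a sequential argument, I would take adjacent sequences $(x_n, s_n) \in F^+$ and $(x_n', s_n') \in F^-$ with $d((x_n,s_n),(x_n',s_n')) \to 0$ and both converging (in $W$) to $(y,z)$, and I must show $d((x_n,s_n), F^+\cap F^-) \to 0$ — this is exactly the u-placedness formulation from Definition~\ref{notation}, which is the natural target here.

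First I would reduce to a statement about the $X$-coordinate. Since $d$ on the product is the sum of the coordinate metrics, $s_n, s_n' \to z$ and $d_X(x_n, x_n') \to 0$. The heart of the matter is to connect up $x_n$ and $x_n'$ inside $X$ using the ULC property: for each $\eps$, there is $\delta > 0$ so that once $d_X(x_n, x_n') < \delta$, there is a connected set $C_n \subseteq X$ with $x_n, x_n' \in C_n$ and $\diam C_n < \eps$. Then $C_n \times \{s_n\}$ is a connected subset of $X\times Z$ of small diameter joining $(x_n, s_n)$ to $(x_n', s_n)$; together with the tiny segment from $(x_n', s_n)$ to $(x_n', s_n')$ (which we don't even need to be connected, just short), this gives a small ``bridge'' in $X\times Z$ from a point of $F^+$ to a point of $F^-$.

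The key step — and the main obstacle — is extracting from such a small connected bridge a point of $F^+ \cap F^-$ nearby. Here is where connectedness does the work: $C_n\times\{s_n\}$ is connected and meets $F^+$ (it contains $(x_n,s_n)$); if it also met $F^-$ we would be done, since $F^+ \cap F^-$ would intersect it (a connected set covered by the two relatively closed sets $F^+\cap(C_n\times\{s_n\})$ and $F^-\cap(C_n\times\{s_n\})$ cannot be split, so if both are nonempty they must intersect — wait, that's not quite it; rather, a connected set which is the union of two closed sets, if the two closed sets were disjoint the set would be disconnected, so they must meet). So either $C_n\times\{s_n\}$ lies entirely in $F^+$, in which case I push the argument to $(x_n', s_n)$ and use the short step to $(x_n',s_n') \in F^-$ — but now $(x_n',s_n)$ and $(x_n',s_n')$ are close, and I'd want a connected bridge between them too; the vertical fiber $\{x_n'\}\times Z$ need not be connected, so instead I should symmetrize: also pick a connected $C_n' \subseteq X$ joining $x_n$ and $x_n'$ of small diameter (same $C_n$ works) and consider both $C_n\times\{s_n\} \subseteq$ potentially $F^+$ and $C_n \times\{s_n'\} \subseteq$ potentially $F^-$; the point $(x_n, s_n)\in F^+$ and $(x_n, s_n')$ is close to it — iterate. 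The cleanest route: consider the connected set $C_n \times \{s_n\}$; if it is not contained in $F^+$ it meets $F^-$, hence meets $F^+\cap F^-$, giving a point of $F^+\cap F^-$ within $\eps + d(s_n, z)$ of $(y,z)$, done. If it is contained in $F^+$, then $(x_n', s_n) \in F^+$; now $(x_n', s_n')\in F^-$ and $d((x_n',s_n),(x_n',s_n'))\to 0$, so the pair $(x_n',s_n)\in F^+$, $(x_n',s_n')\in F^-$ is adjacent and both converge to $(y,z)$ — but this is a new pair of adjacent sequences with the \emph{same} $X$-coordinate, to which I apply the ULC bridging argument again; this time the connected bridge $C\times\{s\}$ would need $C$ joining $x_n'$ to $x_n'$, i.e.\ trivial, which fails.

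So the actual argument must be more careful: I would instead use the \emph{connected} set $D_n = C_n \times \{s_n\} \cup \{x_n'\}\times[\text{path? no}]$. The right fix is to observe we may join $(x_n, s_n)$ and $(x_n', s_n')$ by a connected set of small diameter directly: take $C_n\subseteq X$ connected joining $x_n, x_n'$ with $\diam C_n < \eps$, and note $C_n\times\{s_n'\}$ is connected, contains $(x_n', s_n')\in F^-$ and $(x_n, s_n')$; meanwhile $(x_n, s_n)\in F^+$ and $d((x_n,s_n),(x_n,s_n'))\to 0$. Consider the connected set $E_n := (C_n \times \{s_n\}) \cup (\{x_n\}\times\{s_n, s_n'\})$ — not connected in general. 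I think the honest resolution, and what I expect the paper does, is: take a single connected $C_n \subseteq X$ with $x_n, x_n' \in C_n$, $\diam C_n \to 0$, form the connected set $C_n \times \{s_n\}$, and argue by cases exactly as above; in the bad case where $C_n\times\{s_n\}\subseteq F^+$ we get $(x_n',s_n)\in F^+$, and now look at $C_n\times\{s_n'\}$ which contains $(x_n',s_n')\in F^-$: if $C_n\times\{s_n'\}\not\subseteq F^-$ it meets $F^+$ hence meets $F^+\cap F^-$ near $(y,z)$, done; if $C_n\times\{s_n'\}\subseteq F^-$ then $(x_n, s_n')\in F^-$, but also $(x_n,s_n)\in F^+$, and these two differ only in the $Z$-coordinate with $d(s_n,s_n')\to 0$ — still stuck. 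The escape is that at least one of the two exhaustive bad cases cannot persist along a subsequence unless \emph{both} the whole sets $C_n\times\{s_n\}$ and $C_n\times\{s_n'\}$ are monochromatic with opposite colors, and then $(x_n, s_n)\in F^+$ while $(x_n, s_n')\in F^-$, so the vertical segment issue reappears but now I repeat with roles of $F^+, F^-$ and coordinates $X, Z$ swapped — and $Z$ is arbitrary with no ULC assumption, so this genuinely needs the finite-cover/u-placed bookkeeping rather than pure topology. I therefore expect the real proof to track the sets $C^\pm_\eps$ of Definition~\ref{notation} and show $d((x_n,s_n), F^+\cap F^-)\to 0$ by contradiction: if not, pass to a subsequence bounded away from $F^+\cap F^-$, and then $C_n\times\{s_n\}$ (for $n$ large, diam $<\eps$ small) must avoid $F^+\cap F^-$, forcing it monochromatic; chase the color through the two fibers $\{s_n\}$ and $\{s_n'\}$ and the ULC bridge to reach $d(s_n, s_n')\not\to 0$, a contradiction. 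That final bookkeeping step — turning ``both bridges monochromatic of opposite color'' into a contradiction with adjacency — is the one place I would slow down and write carefully.
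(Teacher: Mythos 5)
Your setup is on the right track (work with a neighbourhood $U\times V$ of the bad point $(\ov y,z)$ disjoint from $F^+\cap F^-$, use ULC to produce small connected sets which must then be monochromatic), but the proof is not complete: you end in the case where the bridge over $\{s_n\}$ lies in $F^+$ and the bridge over $\{s_n'\}$ lies in $F^-$, leaving you with $(x_n,s_n)\in F^+$ and $(x_n,s_n')\in F^-$ and no contradiction, and you explicitly defer the resolution to some ``bookkeeping'' with the sets $C^\pm_\eps$. That bookkeeping cannot succeed as you describe it: the data ``adjacent sequences, one in $F^+$, one in $F^-$, converging to a point of $(Y\times Z)\setminus(X\times Z)$'' is not by itself contradictory (compare the standard non-tight one-point extension $X=\{1/n\}$ with $F^\pm$ the even/odd terms), because closedness of $F^\pm$ in $X\times Z$ only bites when the limit lies in $X\times Z$, and your base points $x_n$ keep moving towards $\ov y$, which may lie outside $X$.

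The missing idea, which is exactly what the paper's proof uses, is the \emph{uniform} radius coming from Lemma~\ref{unif.l.c.}: for the given $\eps$ there is one $\delta>0$ such that around every point of $X$ there is a connected set squeezed between the $\delta$-ball and the $\eps$-ball. Applying this to the two sequences $x_n^+\to \ov y$ and $x_n^-\to \ov y$ (no bridge between them is needed), the connected sets $W_{x_n^\pm}\times\{z_n^\pm\}$ lie in $U\times V$, miss $F^+\cap F^-$, and are therefore contained in $F^+$, resp.\ $F^-$. Since each $W_{x_n^\pm}$ contains $B^X_\delta(x_n^\pm)$ and $x_n^\pm\to\ov y$, all of them contain a \emph{common fixed} point $x\in X$ with $d(x,\ov y)<\delta/2$ (density of $X$). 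Then $(x,z_n^+)\in F^+$ and $(x,z_n^-)\in F^-$ both converge to $(x,z)\in X\times Z$, so closedness of $F^+$ and $F^-$ \emph{in $X\times Z$} forces $(x,z)\in F^+\cap F^-\cap(U\times V)$, the desired contradiction. Freezing the first coordinate at a single $x\in X$ is the step that lets closedness be applied, and it is precisely what your case analysis lacks; with it, the awkward ``both bridges monochromatic with opposite colours'' case disappears.
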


\begin{proof} Take a closed binary cover $X\times Z=F^+\cup F^-$. We should
prove that $\overline{F^+}^{Y\times Z}\cap \overline{F^-}^{Y\times Z}\subseteq\overline{F^+\cap F^-}^{Y\times Z}$. Suppose the contrary. Then there is a point
\begin{equation}\label{closure}
(y,z)\in \overline{F^+}^{Y\times Z}\cap \overline{F^-}^{Y\times Z}
\end{equation}
and a neighborhood $W$ of $(y,z)$ such that
\begin{equation}\label{empty}
W\cap (F^+\cap F^-)=\emptyset.
\end{equation}
Without loss of generality, we may assume that $W=U\times V$. Let $(x_n^+,z_n^+)$ be a sequence in $F^+$ converging to $(y,z)$ and
let $(x_n^-,z_n^-)$ be a sequence in $F^-$ converging to $(y,z)$. Choose $\eps>0$ such that for all sufficiently large $n$ we have
$B^X_\eps(x_n^+) \subset U$ and $B^X_\eps(x_n^-) \subset U$. By taking a subsequence we may assume that these inclusions hold for every $n$.

Since $X$ is ULC, by Lemma \ref{unif.l.c.} there is $\delta>0$ and connected sets $W_{x_n^+}$ and $W_{x_n^-}$ with $B^X_\delta(x_n^+) \subset W_{x_n^+} \subset B^X_\eps(x_n^+) \subset U$ and $B^X_\delta(x_n^-) \subset W_{x_n^-} \subset B^X_\eps(x_n^-) \subset U$. For $n$ large enough $z_n^+$ and $z_n^-$ lie in $V$. The connected sets $W_{x_n^+} \times \{z_n^+\}$ and $W_{x_n^-} \times \{z_n^-\}$ are disjoint from $F^+ \cap F^-$ 
and therefore 
$W_{x_n^+} \times \{z_n^+\} \subset F^+$ and $W_{x_n^-} \times \{z_n^-\} \subset F^-$ for every sufficiently large $n$. On the
other hand for all sufficiently large $n$ we have $W_{x_n^+} \cap W_{x_n^-} \supset B^X_\delta(x_n^+) \cap B^X_\delta(x_n^-) \supset
B^X_{\delta/2}(y) \neq \emptyset$. So there is $x\in X$ such that, for all large $n$, $x \in W_{x_n^+} \cap W_{x_n^-}$. Now $(x,z_n^+)
\in F^+$ tends to $(x,z)$ and $(x,z_n^-) \in F^-$ tends to $(x,z)$. So $(x,z) \in F^+ \cap F^-$. This contradicts the fact that $(x,z) \in W$.
\end{proof}

\begin{remark}\label{UC-WULC} 
Let $X$ be a UC space and let $Y$ be a compact ULC space. Then $X\times Y$ is a WULC. Indeed, let $(x_n,y_n) \in X\times Y$ and $(x'_n, y'_n)\in X\times Y$ be
two discrete adjacent sequences. Since $Y$ is compact it follows that  $(x_n)_{n\in \N}$ and $(x'_n)_{n\in \N}$ are discrete adjacent  sequence in $X$. Since $X$ is $UC$, by Example \ref{Ex_UC} we have $x_n = x'_n$ for all but finitely many $n$. Now using the assumption that $Y$ is ULC we get a   sequence $(C_n)_{n\in\N}$ of connected subsets of $Y$ and $k\in \N$ such that $\lim _{n\to \infty} \mbox{diam} C_n=0$, and $a_{n+k}\in C_n, b_{n+k}\in C_n$ for every $n\in\N$. Since
    $x_n = x'_n$ for all big enough $n$, the connected sets    $\{x_n\}\times C_n$ witness WULC for  $X\times Y$.
\end{remark}

A stronger result will be given below (see the WULC option of Theorem \ref{ALC/WULC_prod}). 

\section{Necessary conditions for straightness of  finite products}

\subsection{The ULC/precompact dichotomy of binary products}

\begin{theorem}\label{Thm_loc_connN} If $X\times Y$ is ALC, then $X$ is ULC or $Y$ is compact.
\end{theorem}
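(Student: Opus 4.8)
The plan is to argue by contraposition: assuming $Y$ is not compact and $X$ is not ULC, I will show that $X\times Y$ is not ALC. Suppose, towards a contradiction, that $X\times Y$ is ALC. Then $X\times Y$ is straight (Theorem~\ref{alc-stra}), hence so is $X$ (Corollary~\ref{Coro1}). Since $X$ is dense in its completion $\widetilde X$, the product $X\times Y$ is dense in $\widetilde X\times Y$, so $\widetilde X\times Y$ is ALC as well, and $\widetilde X$ cannot be ULC — otherwise $X$, being straight, would be ULC by Lemma~\ref{dense-super}(2). Replacing $X$ by $\widetilde X$ we may therefore assume that $X$ is complete; being complete and straight, $X$ is then ALC (Corollary~\ref{coro_comp_try}). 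As $X$ is straight and not ULC, it is not locally connected (Theorem~\ref{Th_loc_con_vs_ULC}): fix $x_0\in X$ and $\eps_0>0$ such that the component $C_0$ of $x_0$ in $B_{\eps_0}(x_0)$ is not a neighbourhood of $x_0$, and choose a sequence $z_n\to x_0$ with $z_n\notin C_0$; since any connected subset of $X$ of diameter $<\eps_0$ that contains $x_0$ lies inside $C_0$, no connected subset of $X$ of diameter $<\eps_0$ contains both $x_0$ and $z_n$. Finally, as a metric space is compact iff sequentially compact, fix a sequence $(y_n)$ in $Y$ with pairwise distinct terms and no convergent subsequence.

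Now put $p_n=(x_0,y_n)$ and $q_n=(z_n,y_n)$ in $X\times Y$. These are adjacent, because $d(p_n,q_n)=d_X(x_0,z_n)\to 0$, and discrete, because a convergent subsequence of $(p_n)$ or $(q_n)$ would give one of $(y_n)$. By the ALC property of $X\times Y$ we get $\hat d_{X\times Y}(p_n,q_n)\to 0$. The projection $\pi_X\colon X\times Y\to X$ is continuous and $1$-Lipschitz, so whenever $I\subseteq X\times Y$ quasi-connects $p_n$ and $q_n$, the image $\pi_X(I)$ quasi-connects $x_0$ and $z_n$ and has $\diam\pi_X(I)\le\diam I$; taking the infimum over such $I$ yields $\hat d_X(x_0,z_n)\le\hat d_{X\times Y}(p_n,q_n)\to 0$. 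Hence for each $n$ there is a set $J_n\subseteq X$ that quasi-connects $x_0$ and $z_n$ with $\diam J_n\to 0$.

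It remains to derive a contradiction, and this is where the real work is: one must upgrade ``$x_0$ and $z_n$ lie in a common quasi-connected set of diameter tending to $0$'' to ``$x_0$ and $z_n$ lie in a common \emph{connected} set of diameter $<\eps_0$'' for large $n$. My approach would be: fix $N$ with $\diam J_n<\eps_0/3$ for $n\ge N$, let $K$ be the closure in $X$ of $\{x_0\}\cup\bigcup_{n\ge N}J_n$ (so $K$ is complete and $\diam K<\eps_0$), and observe, by monotonicity of quasi-components under enlargement, that $x_0$ and every $z_n$ with $n\ge N$ lie in a single quasi-component $Q$ of $K$. The technical core is then to show that $Q$ is connected, using the completeness of $K$ together with the straightness and the ALC property of $X$ (and the description of straightness via $u$-placed binary covers, Theorem~\ref{uplaced}); a hypothetical separation of $Q$ into two nonempty sets clopen in $Q$ is ruled out — in the case where these pieces are at distance $0$, by extracting a pair of discrete adjacent sequences in $X$ and invoking its ALC property. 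Once $Q$ is known to be connected, it is a connected set of diameter $<\eps_0$ containing $x_0$ and all $z_n$ with $n\ge N$, contradicting the choice of $x_0$ and $\eps_0$.

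The one genuinely delicate point is this last step — promoting quasi-connectedness to connectedness near the bad point $x_0$, equivalently showing that a complete straight space which is ``uniformly locally quasi-connected'' is ULC. That is where the quasi-component machinery of \cite{BDP+} carries the load, and I expect it to be the main obstacle. Everything else — the reductions (to $X$ straight, then complete, then not locally connected), the discreteness and adjacency bookkeeping for $(p_n)$ and $(q_n)$, and the projection inequality $\hat d_X\le\hat d_{X\times Y}$ — is routine.
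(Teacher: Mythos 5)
Your setup is sound and overlaps substantially with the paper's: both arguments manufacture discrete adjacent sequences in $X\times Y$ by pairing a witness of non-local-connectedness in $X$ with a discrete sequence in $Y$, and both use the projection inequality $\hat d_X\le \hat d_{X\times Y}$ (proved exactly as you prove it) to transfer the ALC conclusion down to $X$. The preliminary reductions (to $X$ straight, then complete, then not locally connected) are correct, though the passage to the completion turns out to be unnecessary.

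The gap is exactly where you flag it, and it is not a technicality. You end with ``$x_0$ and $z_n$ are quasi-connected by sets $J_n$ with $\diam J_n\to 0$'' and need ``$x_0$ and $z_n$ lie in a common \emph{connected} set of diameter $<\eps_0$,'' and the bridge you sketch does not hold up: a partition of the quasi-component $Q$ of $K$ into two sets clopen \emph{in $Q$} need not extend to a clopen partition of $K$ --- that is precisely the difference between components and quasi-components --- so the positive-distance case of your hypothetical separation is not excluded by anything you invoke; and in the distance-zero case the adjacent sequences you would extract live in the bounded set $K$ and may well converge in $X$ (e.g.\ to $x_0$), so they need not be discrete and ALC gives no information about them. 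The paper's proof avoids the upgrade altogether by choosing the candidate connected neighbourhood differently: for $z\in X$ and a neighbourhood $U$, take $Q=Q_U(z)$, the quasi-component of $z$ \emph{in $U$}, and show that $Q$ is open. If $Q$ fails to be a neighbourhood of some $x\in Q$, one gets $x_n\to x$ in $U$ with $x_n\notin Q$; since no subset of $U$ quasi-connects $x$ and $x_n$, while any quasi-connecting set of diameter less than $d(x,X\setminus U)$ would be contained in $U$, one obtains a uniform lower bound $\hat d_X(x,x_n)>\delta$ --- and then your own projection argument applied to $(x_n,r_n)$ and $(x,r_n)$ contradicts ALC of the product. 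Once $Q$ is known to be open it is a minimal clopen subset of $U$, hence automatically connected, so no passage from quasi-connectedness to connectedness is ever required. Restructuring your argument around this choice of $Q$ makes the rest of your proof go through essentially verbatim (and lets you drop the completion step and Corollary~\ref{coro_comp_try} entirely).
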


\begin{proof} Assume that $Y$ is not compact. We shall prove that
$X$ is locally connected. Then, being straight, it is also ULC.

    Let $z \in X$ and let $U$ be a \nbd of $z$.  We need to find a connected \nbd $Q$ of $z$ contained in $U$.  Let $Q=Q_U(z)$ be the  quasi-component of $z$ in $U$, namely the intersection of all the (relatively) clopen subsets of $U$ containing $z$. It suffices to prove that $Q$ is open. Indeed, in such a case $Q$ will be a minimal
    clopen subset of $U$, and therefore it will be connected.  Assume for a contradiction that $Q$ is not a \nbd of some $x\in Q$.  Then there exists a sequence $x_n\to x$ in $U$ such that $x_n\not \in Q$ for every $n\in\omega$.  Since $Q=Q_U(z)$ is the quasi-component of
    $x$ as well, this implies that $x_n$ and $x$ cannot be quasi-connected by a set contained in $U$. It follows that there is
    $\delta > 0$ such that $\hat d (x_n, x)>\delta$ for every $n$, where
    $d = d_X$ is the metric on $X$ (it suffices to take $\delta$ smaller than the distance between $x$ and the complement of $U$).  Now since
    $Y$ is not compact it contains a discrete sequence $(r_n)_{n\in \N}$.  Let $u_n = ( x_n, r_n)$ and $v_n = (x, r_n)$.
    From $\hat d (x_n, x)>\delta$ we deduce:

\medskip

\underline{Claim:} $\hat d_{X\times Y}(u_n, v_n) > \delta$.

    In fact suppose for a contradiction that $I\subset X \times Y$ is a set of diameter $\leq \delta$ which quasi-connects $u_n$ and $v_n$.
    Its projection $\pi(I)$ on $X$ has diameter $\leq \delta$, so cannot quasi-connect $x_n$ and $x$ so $\pi(I)$ can be partitioned into clopen sets $A,B$ containing $x_n$ and $x$
    respectively.  But then $\pi^{-1}(A) \cap I$ and $\pi^{-1}(B) \cap I$ form a clopen partition of $I$ separating $u_n$ and $v_n$. This contradiction proves the claim.

\medskip

Since $u_n,v_n$ are discrete adjacent sequences, we conclude with the Claim that $X \times Y$ is not ALC, contradicting the assumptions.
\end{proof} 

 The next theorem gives an easy criterion for a finite product of metric spaces to be ALC (resp., WULC). 

\begin{theorem}\label{ALC/WULC_prod} Let $X_1, \ldots, X_n$ be metric spaces. Then $X = \prod_{i=1}^n X_i$
is ALC (resp., WULC) if and only if one of the following conditions holds:
\begin{itemize}
\item[(a)] all spaces $X_i$ are compact;
\item[(b)] all spaces $X_i$ are ULC; 
\item[(c)] one of the spaces is ALC (resp., WULC) and all other spaces are both compact and ULC.
\end{itemize}
    \end{theorem}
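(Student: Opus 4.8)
The plan is to reduce the $n$-fold statement to the binary case and then prove the binary case directly, treating the ALC and WULC versions in parallel. For the reduction, I would argue by induction on $n$, using that a finite product is ALC (resp. WULC) iff it is so ``two factors at a time'': write $X = X_1 \times (X_2 \times \cdots \times X_n)$ and apply the binary result, noting that by Lemma~\ref{prod_ULC} the ULC-ness of a finite product is equivalent to the ULC-ness of all factors, and that compactness of a finite product is equivalent to compactness of all factors. One must check that the three disjuncts (a), (b), (c) for the $n$-fold product are exactly what the inductive combination of the binary disjuncts produces; this is a routine bookkeeping step (for instance, if $X_1$ is ALC and $X_2 \times \cdots \times X_n$ falls under case (c), then the compact-ULC factor among $X_2,\dots,X_n$ together with $X_1$ being ALC, etc.). So the real content is the case $n=2$.

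For the binary case, the ``only if'' direction is where the work is. Suppose $X \times Y$ is ALC (resp. WULC). By Theorem~\ref{alc-stra} it is straight, so by Corollary~\ref{Coro1} both $X$ and $Y$ are straight. Now apply Theorem~\ref{Thm_loc_connN}: since $X\times Y$ is ALC, either $X$ is ULC or $Y$ is compact; by symmetry, either $Y$ is ULC or $X$ is compact. This gives four combinations. If $X$ and $Y$ are both ULC, we are in case (b). If $X$ and $Y$ are both compact, we are in case (a) (note compact $\Rightarrow$ ALC and WULC trivially, as the excerpt remarks). The remaining case, up to symmetry, is $X$ compact but $X$ not ULC forcing $Y$ ULC, together with $Y$ compact being possible or not — more precisely, one of the factors, say $Y$, is ULC and the other, $X$, is compact; to land in case (c) we additionally need $X$ to be ULC, which need not hold, OR we need $Y$ compact and ULC with $X$ merely ALC/WULC. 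The point is: whenever we are not in (a) or (b), exactly one factor is compact (say $X$) and the other ($Y$) is ULC; if moreover $X$ is ULC we are in (b) again, so $X$ is compact-non-ULC, hence $Y$ must carry the ``ALC/WULC'' role — but then I must show $X$ being merely compact is not enough and in fact $Y$ being ULC is the relevant statement, i.e. case (c) holds with the compact-ULC space being $X$... here one sees $X$ need not be ULC, so I should instead observe: we have shown each factor is either compact or ULC; if neither (a) nor (b) holds then one factor (say $X$) is compact-not-ULC and hence (by Theorem~\ref{Thm_loc_connN} applied the other way, using $X$ not compact is false) $Y$ must be ULC, and symmetrically if $Y$ is compact-not-ULC then $X$ is ULC. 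The resulting picture is exactly (c): one space is both compact and ULC (the ULC one, which is also compact when the other is non-ULC... ) — I will need to lay out the $2\times 2$ table carefully and check it collapses to (a)$\vee$(b)$\vee$(c). I expect this case bookkeeping to be the main subtlety, though not deep.

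The ``if'' direction is easier and I would dispatch each case in turn. Case (a): a product of compact spaces is compact, and compact spaces contain no discrete sequences, so the ALC (and WULC) condition holds vacuously. Case (b): a product of ULC spaces is ULC by Lemma~\ref{prod_ULC}, and ULC $\Rightarrow$ WULC $\Rightarrow$ ALC by Theorem~\ref{alc-stra}. Case (c): say $X_1$ is ALC (resp. WULC) and $X_2,\dots,X_n$ are compact and ULC; then $X_2\times\cdots\times X_n$ is compact and ULC by Lemma~\ref{prod_ULC}, so it suffices to prove the binary statement ``$X$ ALC (resp. WULC) and $Y$ compact ULC $\Rightarrow$ $X\times Y$ ALC (resp. WULC)''. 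For this, given discrete adjacent sequences $(x_n,y_n),(x'_n,y'_n)$ in $X\times Y$, compactness of $Y$ forces $(x_n),(x'_n)$ to be discrete and adjacent in $X$ (since a $Y$-coordinate can accumulate but that cannot produce an accumulation point of the whole pair unless the $X$-coordinates do too), and then I combine a witnessing family for $X$ (from $X$ being ALC, resp. WULC) with a witnessing family for $Y$ coming from $Y$ being ULC — the WULC sub-case is essentially Remark~\ref{UC-WULC} with ``UC'' relaxed to ``WULC'', and the ALC sub-case is analogous using the metric $\hat d$, exploiting that $\hat d_{X\times Y}((a,b),(a',b')) \le \hat d_X(a,a') + \hat d_Y(b,b')$ (a product of quasi-connecting sets quasi-connects the pair), so $\hat d_Y \to 0$ from ULC-ness of $Y$ and $\hat d_X \to 0$ from ALC-ness of $X$ give $\hat d_{X\times Y} \to 0$. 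The one genuinely new verification here is that sub-additivity of $\hat d$ under products, which I would state and prove as a small claim.
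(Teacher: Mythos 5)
Your overall route is the paper's: necessity via the dichotomy of Theorem~\ref{Thm_loc_connN} applied to both factors plus case bookkeeping, and sufficiency via compactness in case (a), Lemma~\ref{prod_ULC} in case (b), and, in case (c), pairing witnesses in the ALC/WULC factor with small connected sets coming from ULC-ness of the compact factor (the paper treats $n$ factors directly, but by the same binary splitting $X_1\times\prod_{i\ge 2}X_i$, so your inductive reduction is harmless). The sufficiency half of your plan is sound and essentially coincides with the paper's proof; your auxiliary claim $\hat d_{X\times Y}((a,b),(a',b'))\le \hat d_X(a,a')+\hat d_Y(b,b')$ is true, since quasi-components in a product are products of quasi-components (a clopen subset of $I\times J$ containing $(a,b)$ meets the slices $I\times\{b\}$ and $\{a'\}\times J$ in relatively clopen sets), and the paper uses the special case ``quasi-connecting set times connected set''.

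The necessity half, as written, has a genuine gap and a wrong turn. First, to land in case (c) you must exhibit a factor that is ALC (resp.\ WULC); from Theorem~\ref{alc-stra} and Corollary~\ref{Coro1} you only obtain that the factors are \emph{straight}, which is strictly weaker and does not yield (c). You need (and tacitly use, when you say ``$Y$ must carry the ALC/WULC role'') the easy but unstated fact that ALC (resp.\ WULC) passes to factors: embed $X$ as $X\times\{y_0\}$, observe that discrete adjacent sequences in $X$ stay discrete adjacent there, and project the witnessing quasi-connecting (resp.\ connected) sets back to $X$ --- the same projection trick as in the Claim inside Theorem~\ref{Thm_loc_connN}. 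Second, the deduction ``$X$ compact-not-ULC forces $Y$ ULC'' is backwards: if $X$ is not ULC, the dichotomy gives $Y$ compact, so a compact-non-ULC factor puts you in case (a), not (c); and the assertion ``we have shown each factor is either compact or ULC'' does not follow from the two dichotomies and fails exactly in case (c), where one factor may be neither --- in that situation the two dichotomies force the \emph{other} factor to be both compact and ULC, which is how the paper organizes the argument (if some $X_1$ is neither compact nor ULC, then the product of the remaining factors is compact and ULC, giving (c); otherwise every factor is compact or ULC and one derives (a) or (b)). Your $2\times 2$ table does collapse correctly --- the combinations ($X$ ULC $\wedge$ $Y$ ULC), ($X$ ULC $\wedge$ $X$ compact), ($Y$ compact $\wedge$ $Y$ ULC), ($X$ compact $\wedge$ $Y$ compact) give (b), (c), (c), (a) respectively --- but the two middle cases need precisely the missing inheritance fact, so the case analysis does not close as stated.
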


\begin{proof} Assume that $X$ is ALC (resp., WULC). Then clearly every space $X_i$ is ALC (resp. WULC). 
Assume some of the spaces, say $X_1$, is neither compact nor ULC. Then Theorem \ref{Thm_loc_connN} yields that $\prod_{i=2}^n X_i$ is both compact and ULC. 
This proves (c). Hence we can assume that each one of the spaces is either compact or UCL. 
If one of the spaces, say $X_1$, is non-compact, then it is ULC and by Theorem \ref{Thm_loc_connN} all spaces $X_i$, $i>1$, are ULC. Thus (b) holds true.
If there exists a space that is compact, but non-ULC, then a similar argument leads to (a).

Since both compact and ULC imply WULC (hence ALC as well), to prove the sufficiency it is enough to consider only the case (c). 
Assume that all spaces $X_i$, $i>1$, are both compact and ULC. Let $Y=\prod_{i=2}^n X_i$. Then $Y$ is a compact ULC space by Lemma \ref{prod_ULC}. 
We shall prove that $X$ is ALC (resp. WULC) when $X_1$ has the same property. 

Let $(x_n,y_n)$ and $(x_n',y_n')$ be discrete adjacent sequences in $X=X_1\times Y$. We can assume without loss of generality that $y_n\to y$ and $y_n'\to y$ for some $y\in Y$ (as $(y_n)$ and $(y_n')$ are adjacent sequence in the compact space $Y$). Now discreteness of $(x_n,y_n)$ and $(x_n',y_n')$ yields that $(x_n)$ and $(x_n')$ are discrete adjacent sequences in $X$. We can find a sequence $(I_n)$ of subsets of $X$ such that 
\begin{itemize}
  \item[(a)]  $\mbox{diam } I_n \to 0$, and 
  \item[(b$_1$)] $I_n$ quasi connects $x_n$ and $x_n'$, in case $X$ is ALC, 
  \item[(b$_2$)] $I_n$ is connected, in case $X$ is WULC. 
\end{itemize}
Since $Y$ is ULC and  $y_n\to y$, $y_n'\to y$, there is  a sequence $(C_n)$ of connected subsets of $Y$ such that $y_n, y_n'\in C_n$ and $\mbox{diam } C_n \to 0$. Let $J_n=I_n\times C_n$. Then $\mbox{diam }J_n \to 0$ and $J_n$ quasi connects $(x_n,y_n)$ and $(x_n',y_n')$ in case  
 $X$ is ALC, while $J_n$ is connected in case $X$ is WULC. Hence, $J_n$ witness ALC-ness (resp., WULC-ness) of $X$. \NB
\end{proof}

The next corollary establishes one direction (the necessary condition) of Theorem A. 

\begin{corollary}\label{Thm_loc_conn} If $X \times Y$ is straight, then $X$ is ULC or $Y$ is precompact. \end{corollary}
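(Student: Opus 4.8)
The plan is to reduce the statement to the complete case, where Theorem~\ref{Thm_loc_connN} applies verbatim. Assume $X\times Y$ is straight and let $\widetilde X$, $\widetilde Y$ be the completions of $X$ and $Y$. Since the sum metric on a finite product induces the product uniformity and completion commutes with finite products, $\widetilde X\times\widetilde Y$ is (isometric to) the completion of $X\times Y$. By Corollary~\ref{NEWcorol} the completion of a straight space is straight, so $\widetilde X\times\widetilde Y$ is straight; being moreover complete, it is ALC by Corollary~\ref{coro_comp_try}.

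Next I would feed the ALC product $\widetilde X\times\widetilde Y$ into Theorem~\ref{Thm_loc_connN}, which gives the dichotomy: either $\widetilde X$ is ULC, or $\widetilde Y$ is compact. In the second case $Y$ is dense in the compact space $\widetilde Y$, hence totally bounded, i.e.\ precompact, and we are done. So it remains to treat the first case and upgrade ``$\widetilde X$ is ULC'' to ``$X$ is ULC''.

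The only genuine subtlety is exactly this last passage, since ULC is not inherited by dense subspaces in general (the punctured circle is a non-ULC dense subspace of the ULC circle). The point is that here $X$ is straight: this follows from Corollary~\ref{Coro1}, because $X\times Y$ is straight. Applying Lemma~\ref{dense-super}(2) with the ULC space $\widetilde X$ in the role of the ambient space and the dense subspace $X$ in the role of the subspace, straightness of $X$ forces $X$ to be ULC. (Equivalently one can invoke Theorem~\ref{str-dense} to see that the dense extension $X\subseteq\widetilde X$ is tight and then use the same lemma.) This finishes the argument, which is essentially a short chaining of Corollary~\ref{NEWcorol}, Corollary~\ref{coro_comp_try}, Theorem~\ref{Thm_loc_connN}, Corollary~\ref{Coro1} and Lemma~\ref{dense-super}; I expect no computational content beyond the bookkeeping of these citations.
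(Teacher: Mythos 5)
Your proof is correct and follows essentially the same route as the paper: pass to the completion $\widetilde X\times\widetilde Y$, use Corollary~\ref{NEWcorol} and Corollary~\ref{coro_comp_try} to get ALC, apply Theorem~\ref{Thm_loc_connN}, and descend from $\widetilde X$ ULC to $X$ ULC via Lemma~\ref{dense-super}. The only difference is cosmetic: you make explicit (via Corollary~\ref{Coro1}) the straightness of $X$ needed to invoke Lemma~\ref{dense-super}(2), a step the paper leaves implicit.
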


\begin{proof} Assume that $Y$ is not precompact. We shall prove that $X$ is ULC. The completion $\widetilde{X\times Y}=\widetilde{X}\times\widetilde {Y}$ is straight by Corollary \ref{NEWcorol}. Hence by Corollary \ref{coro_comp_try} it is ALC. Since $\widetilde {Y}$ is not compact by our hypothesis, Theorem \ref{Thm_loc_connN} yields that $\widetilde{X}$ is ULC. Now Lemma \ref{dense-super} implies that $X$ is ULC.
\end{proof}

\begin{corollary}\label{powers} If $X\times X$ is straight for a metric space $X$, then $X$ is either ULC or straight and precompact.
\end{corollary}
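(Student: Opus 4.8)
The plan is to derive the statement directly from the two corollaries just established, so the argument is essentially a bookkeeping step. First, since $X\times X$ is straight, Corollary~\ref{Coro1} immediately gives that $X$ is straight; this already secures the ``straight'' clause of the second alternative.

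Next I would invoke the dichotomy of Corollary~\ref{Thm_loc_conn} with the choice $Y:=X$: since $X\times X$ is straight, either $X$ is ULC or $X$ is precompact. In the first case we land in the first alternative of the statement. If instead $X$ is precompact, then combining this with the straightness of $X$ obtained in the previous step places us in the second alternative. Note that the two alternatives need not be mutually exclusive, and that in the ULC case straightness comes for free anyway (e.g.\ by Theorem~\ref{alc-stra}), so the phrasing ``ULC, or straight and precompact'' is just a convenient way to record both possibilities.

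I expect no real obstacle here: all the substance is already packaged in Corollary~\ref{Coro1} and Corollary~\ref{Thm_loc_conn}, and the only observation needed is that the single hypothesis ``$X\times X$ straight'' simultaneously feeds both of them with the symmetric choice $Y=X$. If one preferred a self-contained argument one could instead argue through completions — $\widetilde{X}\times\widetilde{X}$ is straight, hence ALC by Corollary~\ref{coro_comp_try}, hence $\widetilde{X}$ is ULC or compact by Theorem~\ref{Thm_loc_connN}, and then transfer back to $X$ using Lemma~\ref{dense-super} and Corollary~\ref{NEWcorol} — but this merely re-runs the proof of Corollary~\ref{Thm_loc_conn} in the special case at hand, so the short route is preferable.
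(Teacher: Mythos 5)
Your argument is correct and coincides with the paper's intended justification: the corollary is stated as an immediate consequence of Corollary~\ref{Thm_loc_conn} applied with $Y=X$, together with Corollary~\ref{Coro1} supplying straightness of $X$. Nothing further is needed.
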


 It was proved in \cite[Cor. 5.11]{BDP+} that if $X \times (\omega + 1)$ is ALC, then $X$ is complete. Using Theorem \ref{Thm_loc_connN} we obtain the following stronger result:

\begin{corollary} \label{alc-compact} $X \times (\omega + 1)$ is ALC if and only if $X$ is compact.
\end{corollary}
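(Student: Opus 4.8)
The plan is to deduce Corollary~\ref{alc-compact} from Theorem~\ref{Thm_loc_connN} together with the earlier observation that $\omega+1$ is compact hence ALC, and that compactness of $X$ already yields the ALC-ness of $X\times(\omega+1)$ (e.g. by Theorem~\ref{ALC/WULC_prod}(a), since both factors are then compact). So one direction is essentially free, and the content is the forward implication: if $X\times(\omega+1)$ is ALC, then $X$ is compact.

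\medskip

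For the forward implication I would argue as follows. Suppose $X\times(\omega+1)$ is ALC. Apply Theorem~\ref{Thm_loc_connN} with the roles $X$ and $Y:=\omega+1$; but $\omega+1$ is compact, so that theorem gives no information directly in that arrangement. Instead apply it in the other arrangement: since $\omega+1$ is plainly \emph{not} ULC (it is not even locally connected at its limit point — indeed it is totally disconnected and not uniformly discrete), Theorem~\ref{Thm_loc_connN} (with the two factors swapped, which is legitimate since $X\times Y\cong Y\times X$) forces the complementary factor to be compact; that is, $X$ must be compact. This is the whole argument, and the only thing to be careful about is that Theorem~\ref{Thm_loc_connN} is stated asymmetrically (``$X$ is ULC or $Y$ is compact''), so I must explicitly invoke the homeomorphism $X\times(\omega+1)\cong(\omega+1)\times X$ before applying it, and then note that the first alternative (``$\omega+1$ is ULC'') fails, leaving only ``$X$ is compact''.

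\medskip

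Concretely, the steps in order: (1) state that $\omega+1$ is compact but not ULC — the latter because a ULC space is locally connected (by the remark after Definition~\ref{SCS_uniformly locally connected}) while $\omega+1$ fails local connectedness at its limit point; (2) for the ``if'' direction, observe that if $X$ is compact then $X\times(\omega+1)$ is a compact metric space, hence contains no discrete sequence, hence is vacuously ALC; (3) for the ``only if'' direction, assume $X\times(\omega+1)$ is ALC, rewrite it as $(\omega+1)\times X$, apply Theorem~\ref{Thm_loc_connN} to conclude ``$\omega+1$ is ULC or $X$ is compact'', and discard the first disjunct by step (1), obtaining that $X$ is compact. I do not anticipate a genuine obstacle here; the only mild subtlety is making sure the asymmetry of Theorem~\ref{Thm_loc_connN} is handled by the explicit symmetry of the product, and that we have on record that $\omega+1$ is not ULC.

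\begin{proof}
If $X$ is compact, then $X\times(\omega+1)$ is a compact metric space and hence contains no discrete sequence, so it is ALC vacuously. Conversely, assume $X\times(\omega+1)$ is ALC. Since $\omega+1$ is not locally connected (at its limit point), it is not ULC by the remark following Definition~\ref{SCS_uniformly locally connected}. Applying Theorem~\ref{Thm_loc_connN} to $(\omega+1)\times X\cong X\times(\omega+1)$ we get that $\omega+1$ is ULC or $X$ is compact; the first alternative being impossible, $X$ is compact.
\end{proof}
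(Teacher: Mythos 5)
Your argument is correct and is exactly the route the paper intends: the corollary is stated right after the remark ``Using Theorem~\ref{Thm_loc_connN} we obtain the following stronger result,'' and the proof is precisely your combination of the swapped application of Theorem~\ref{Thm_loc_connN} (with $\omega+1$ not ULC because it is not locally connected at its limit point) together with the observation that a compact product has no discrete sequences and is therefore ALC. Nothing is missing; your handling of the asymmetry of Theorem~\ref{Thm_loc_connN} via $X\times(\omega+1)\cong(\omega+1)\times X$ is the only point that needed care, and you addressed it explicitly.
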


\begin{example} \label{exYamada} As an application of the above corollary let us verify the following fact proved in \cite{Y}:
if $X \times (\omega + 1)$ is straight for some metric space $X$, then $X$ is precompact. Indeed, the completion $\widetilde{X}\times (\omega + 1)$ of $X \times (\omega + 1)$ 
is straight by Corollary \ref{NEWcorol}, so also ALC by Theorem \ref{coro_comp_try}. Now the above corollary implies that  $\widetilde{X}$ is compact, i.e., 
$X$ is precompact. Note that this fact could be obtained also directly from Corollary \ref{Thm_loc_conn}  as $(\omega + 1)$ is not ULC (see also Corollary \ref{Coro2}.)
\end{example}

The next corollary (which should be compared to Corollary~\ref{powers} and  Lemma~\ref{prod_ULC})
shows that the ALC and WULS properties are preserved by non-trivial finite powers only when the starting space is compact 
or ULC, i.e. only in cases which are trivially true.


\begin{corollary}\label{cor-prec-power} For every metric space $X$ TFAE: 
\begin{itemize}
  \item[(a)] $X\times X$ is ALC.
  \item[(b)] $X$ is either compact or ULC.
  \item[(c)] $X^n$ is WULC for every $n\in \N$.
\end{itemize}
\end{corollary}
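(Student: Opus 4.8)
The plan is to prove the cycle of implications $(c)\Rightarrow(a)\Rightarrow(b)\Rightarrow(c)$, each step of which reduces to a result already in hand. The implication $(c)\Rightarrow(a)$ is immediate: specializing $(c)$ to $n=2$ gives that $X\times X$ is WULC, and WULC $\Rightarrow$ ALC by Theorem \ref{alc-stra}.

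For $(a)\Rightarrow(b)$ I would simply apply Theorem \ref{Thm_loc_connN} with $Y=X$: if $X\times X$ is ALC, then $X$ is ULC or $X$ is compact, which is exactly $(b)$. This is the only non-formal step, and it is handed to us by the dichotomy theorem.

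Finally, for $(b)\Rightarrow(c)$ I would argue by cases. If $X$ is compact, then $X^n$ is compact for every $n$, hence a UC space, hence WULC (Theorem \ref{alc-stra}, using that compact metric spaces are UC). If $X$ is ULC, then $X^n$ is ULC for every $n$ by an $n$-fold application of Lemma \ref{prod_ULC}, and ULC $\Rightarrow$ WULC as noted right after Definition \ref{deadend}. (Alternatively one may quote Theorem \ref{ALC/WULC_prod} directly: the compact case falls under its alternative $(a)$, the ULC case under its alternative $(b)$, and in both cases $X^n$ is WULC.)

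There is essentially no obstacle here: the mathematical content is entirely absorbed by Theorem \ref{Thm_loc_connN}. The only thing to watch is that the auxiliary lemmas are used in the direction in which they are stated — Lemma \ref{prod_ULC} is used to pass from ULC factors to a ULC product, and Theorem \ref{alc-stra} supplies the chain UC $\Rightarrow$ WULC $\Rightarrow$ ALC needed in both $(c)\Rightarrow(a)$ and $(b)\Rightarrow(c)$.
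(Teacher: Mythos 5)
Your proposal is correct and follows essentially the same route as the paper, which proves the same cycle of implications using exactly these ingredients: Theorem \ref{Thm_loc_connN} with $Y=X$ for (a)$\Rightarrow$(b), Lemma \ref{prod_ULC} for (b)$\Rightarrow$(c), and Theorem \ref{alc-stra} for (c)$\Rightarrow$(a). Your explicit handling of the compact case in (b)$\Rightarrow$(c) (compact $\Rightarrow$ UC $\Rightarrow$ WULC) just fills in a detail the paper leaves implicit.
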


\begin{proof}
(a) $\to$ (b) follows directly from Theorem \ref{Thm_loc_connN}, (b) 
$\to $ (c) follows from Lemma \ref{prod_ULC} and (c) $\to$ (a) follows form \AB Theorem \ref{alc-stra}. 
\end{proof}

In particular, if \AB $X \times X$ is straight for a complete metric space $X$, then $X$ is either ULC or compact.
Consequently, all finite powers of $X$ are straight.



\begin{example}\label{last_example} Let $X$ be a UC space. Then $X$ is complete, hence by Corollary \ref{coro_comp_try}  $X\times X$ is straight if and only if it is
ALC. Hence the above proposition implies that  $X\times X$ is straight if and only if $X$ is either compact or ULC. Since examples
of UC spaces that are neither either compact nor ULC exist in profusion (just take any non-compact totally disconnected UC space),
we see that  $X\times X$ need not be straight for a UC space $X$. \end{example}

\subsection{Characterization of ULC spaces via straightness of products}

In the next corollary of Corollary \ref{Thm_loc_conn} we characterize the ULC spaces in terms of straightness of various products.
Note that by Theorem  \ref{Th_loc_con_vs_ULC} a space is ULC iff it is straight and locally connected. 
 
\begin{corollary}\label{Cor_loc_conn} For a metric space $X$ TFAE:
\begin{itemize}
     \item[(a)] $X$ is ULC;
     \item[(b)] $\R\times X$ is straight;
     \item[(c)] $\N \times X$ is straight;
     \item[(d)] $Y\times X$ is straight for some non-precompact space $Y$.
\end{itemize}
\end{corollary}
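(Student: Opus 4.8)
The plan is to close the loop of implications $(a)\Rightarrow(b)\Rightarrow(d)\Rightarrow(a)$ and, in parallel, $(a)\Rightarrow(c)\Rightarrow(d)$, so that the four conditions become equivalent. Most of the ingredients are already in place: the only step carrying genuine content is the dichotomy of Corollary~\ref{Thm_loc_conn}; everything else is bookkeeping built on Lemma~\ref{prod_ULC} and Theorem~\ref{alc-stra}.

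For $(a)\Rightarrow(b)$ and $(a)\Rightarrow(c)$ I would first record two elementary observations: $\R$ is ULC (it is a topological vector space, or one checks the definition of ULC directly), and $\N$ is ULC — indeed, any uniformly discrete metric space is ULC, since given $\eps>0$ one takes $\delta$ strictly below the uniform lower bound on the distances between distinct points, so that two points at distance $<\delta$ coincide and lie in a singleton, which is a connected set of diameter $0<\eps$. Assuming $X$ is ULC, Lemma~\ref{prod_ULC} then gives that both $\R\times X$ and $\N\times X$ are ULC, hence straight by the chain ULC $\Rightarrow$ WULC $\Rightarrow$ ALC $\Rightarrow$ straight of Theorem~\ref{alc-stra}.

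The implications $(b)\Rightarrow(d)$ and $(c)\Rightarrow(d)$ are immediate, since neither $\R$ nor $\N$ is precompact, so either one serves as the required witness $Y$.

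The remaining implication $(d)\Rightarrow(a)$ is the heart of the matter. Suppose $Y\times X$ is straight for some non-precompact $Y$. The sum metric on a product is symmetric in its two factors, so the coordinate swap is an isometry $Y\times X\to X\times Y$; hence $X\times Y$ is straight as well. Corollary~\ref{Thm_loc_conn} then forces $X$ to be ULC or $Y$ to be precompact, and since $Y$ is not precompact we conclude that $X$ is ULC. The only non-formal step here is the appeal to Corollary~\ref{Thm_loc_conn}; once that dichotomy is granted, the present corollary follows purely formally, so I do not expect any serious obstacle beyond correctly invoking the earlier results.
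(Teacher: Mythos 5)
Your proof is correct and follows essentially the same route as the paper: Lemma~\ref{prod_ULC} gives (a)$\Rightarrow$(b),(c), the dichotomy of Corollary~\ref{Thm_loc_conn} (after the harmless swap of factors) gives (d)$\Rightarrow$(a), and (b),(c)$\Rightarrow$(d) are immediate since $\R$ and $\N$ are not precompact. One tiny citation nit: Theorem~\ref{alc-stra} as stated begins with UC rather than ULC, so for ULC $\Rightarrow$ straight you should either invoke the paper's remark that ULC implies WULC before applying that chain, or simply cite Theorem~\ref{Th_loc_con_vs_ULC}.
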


\begin{proof} (a) implies both (b) and (c) by Lemma \ref{prod_ULC}. $(d)$ implies $(a)$ by Corollary \ref{Thm_loc_conn}.
 The implications (b) $\to $ (d) and (c) $\to $ (d) are obvious.
\end{proof}

\begin{corollary}\label{lift_ULC} If $f:X\to Y$ is a u.c. map allowing the lifting of adjacent  sequences, then $Y$ is ULC whenever $X$ is ULC.
\end{corollary}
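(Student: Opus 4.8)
The plan is to transport the whole situation to products with the line $\R$ and then read off the conclusion from Corollary~\ref{Cor_loc_conn}. Since $X$ is ULC, the product $\R\times X$ is straight --- this is exactly the implication (a)$\Rightarrow$(b) of Corollary~\ref{Cor_loc_conn} (equivalently, $\R\times X$ is even ULC by Lemma~\ref{prod_ULC}). Next I would consider the map $g=\mathrm{id}_\R\times f\colon \R\times X\to\R\times Y$, which is u.c.\ because $f$ is (with respect to the sum metrics). The key observation is that $g$ still allows lifting of adjacent sequences: if $(t_n,a_n)$ and $(s_n,b_n)$ are adjacent in $\R\times Y$, then $|t_n-s_n|\to 0$ and $d_Y(a_n,b_n)\to 0$, so $(a_n)$ and $(b_n)$ are adjacent in $Y$; lifting this pair through $f$ produces a subsequence $(n_k)$ and adjacent sequences $(a_k')$, $(b_k')$ in $X$ with $f(a_k')=a_{n_k}$ and $f(b_k')=b_{n_k}$, and then $(t_{n_k},a_k')$, $(s_{n_k},b_k')$ are adjacent in $\R\times X$ and are carried by $g$ onto the corresponding subsequences of the original sequences. (Incidentally, allowing lifting of adjacent sequences already forces $f$, hence $g$, to be surjective, by applying the property to constant sequences.) Now Lemma~\ref{lemma_imag}, applied to $g$, yields that $\R\times Y$ is straight, and then the implication (b)$\Rightarrow$(a) of Corollary~\ref{Cor_loc_conn} gives that $Y$ is ULC.

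I do not expect a real obstacle. The single point requiring an (entirely routine) verification is that the product map $\mathrm{id}_\R\times f$ inherits the property of allowing lifting of adjacent sequences --- morally because the identity map trivially has this property and it is stable under finite products (one lifts the two coordinates in turn, passing to nested subsequences, and for the $\R$-coordinate no passage is needed at all). The argument could equally be carried out with $\N$ in place of $\R$, invoking clause (c) of Corollary~\ref{Cor_loc_conn} instead of clauses (a) and (b); the choice of $\R$ is only for economy, since $\R\times X$ is then outright ULC, which makes the first step especially transparent.
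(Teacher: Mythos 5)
Your proof is correct and is essentially the paper's own argument: the paper forms the product map $f\times \mathrm{id}_\N\colon X\times\N\to Y\times\N$, notes it still allows lifting of adjacent sequences, and then applies Lemma~\ref{lemma_imag} together with Corollary~\ref{Cor_loc_conn}, exactly as you do (with $\R$ in place of $\N$, a difference you yourself point out is immaterial). Your explicit verification of the lifting property for the product map is just the routine check the paper leaves to the reader.
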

\begin{proof} It suffices to note that also the map $f\times id_\N:X\times \N\to Y\times \N$ allows the lifting of adjacent sequences and then apply Corollary \AB \ref{Cor_loc_conn} and Lemma \ref{lemma_imag}.
\end{proof}

Let $C$ be the Cantor space.

\begin{corollary}\label{Coro2}\label{coroll_lc}  Let $X$ be a metric space. If the product  $X\times Y$ is straight for some non-discrete
zero-dimensional space $Y$, then $X$ is precompact and $Y$ is UC. In particular, if either $X\times C$ or $X\times (\omega +1)$ is straight, then $X$ is precompact.
\end{corollary}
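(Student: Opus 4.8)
The plan is to reduce everything to results already at hand: the dichotomy of Corollary~\ref{Thm_loc_conn}, the transfer of straightness to a factor (Corollary~\ref{Coro1}), and the fact that a totally disconnected straight space is a UC space (\cite[Theorem~4.6]{BDP3}).

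First I would record the elementary topological input on $Y$. A zero-dimensional metric space is totally disconnected, since any two distinct points are separated by a clopen ball of small radius. Moreover a totally disconnected \emph{locally connected} space is discrete: in such a space the only connected set containing a given point $y$ is $\{y\}$, so a basis of connected neighbourhoods of $y$ forces $\{y\}$ to be open. Consequently a non-discrete zero-dimensional space $Y$ is not locally connected, and in particular it is not ULC.

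Next I would invoke the dichotomy. Since $X\times Y$ and $Y\times X$ are isometric, Corollary~\ref{Thm_loc_conn} applied with the roles of the factors exchanged gives that $Y$ is ULC or $X$ is precompact; by the previous step $Y$ is not ULC, hence $X$ is precompact. On the other hand Corollary~\ref{Coro1} yields that $Y$ is straight, and being totally disconnected $Y$ is then a UC space by \cite[Theorem~4.6]{BDP3}. This establishes the first assertion. For the ``in particular'' clause it suffices to observe that both the Cantor space $C$ and $\omega+1$ are non-discrete zero-dimensional spaces (for $\omega+1$ the cofinite neighbourhoods of the limit point form a clopen basis there, while the remaining points are isolated), so the first part applies with $Y=C$ or $Y=\omega+1$ and gives that $X$ is precompact.

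There is no serious obstacle here; the only point requiring a moment's care is the topological observation that a non-discrete zero-dimensional metric space fails to be locally connected, which is precisely what rules out the ``$Y$ is ULC'' alternative in the dichotomy and lets us conclude precompactness of $X$.
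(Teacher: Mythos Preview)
Your proof is correct and follows essentially the same route as the paper: the paper's argument is just the terse version of yours, invoking Corollary~\ref{Thm_loc_conn} after noting that a non-discrete zero-dimensional $Y$ is not locally connected, and then Corollary~\ref{Coro1} together with the fact that zero-dimensional straight spaces are UC. Your only addition is spelling out why ``non-discrete zero-dimensional'' forces failure of local connectedness, which the paper leaves implicit.
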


\begin{proof} Precompactness of $X$ follows from Corollary \ref{Thm_loc_conn} since $Y$ is not locally connected. Since straight zero-dimensional spaces are UC, the second part follows from Corollary \ref{Coro1}.
\end{proof}

\begin{remark} Note that if the space $Y$ is discrete, then $Y$ must be uniformly discrete (by straightness). 
 
 Moreover,  zero-dimensionality of $Y$ is important in Corollary~\ref{coroll_lc}. Take the open unit disk $D$ in the
plane. Then $D$ is precompact, ULC and it is not UC. However, by Theorem~\ref{pre-ULC}, $X\times Y$ is straight for each straight space $X$.
\end{remark}

%

\section{Sufficient conditions}\label{NewSec}

\subsection{A general property of tight extensions and its
consequences}\label{suff(a)}

\AB The following theorem shows that tight extensions are preserved under finite products. Later it will be extended to 
infinite products under the additional assumption that the spaces are ULC (see Theorem \ref{pulc}). We do not know 
whether the additional assumption in the infinite case can be removed. 

\begin{theorem}\label{NewTh} Let $X,Y$ be dense tight subspaces of the metric spaces $X', Y'$. Then $X'\times Y'$ is a tight extension of $X\times Y$.
\end{theorem}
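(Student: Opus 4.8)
The plan is to unwind the definition of tightness directly. Let $X \times Y = F^+ \cup F^-$ be a closed binary cover; writing $Z = X' \times Y'$, I must show $\overline{F^+}^Z \cap \overline{F^-}^Z \subseteq \overline{F^+ \cap F^-}^Z$, the reverse inclusion being trivial. Suppose toward a contradiction that some $(x',y') \in X' \times Y'$ lies in $\overline{F^+}^Z \cap \overline{F^-}^Z$ but has a basic open neighbourhood $U' \times V'$ (with $U'$ open in $X'$, $V'$ open in $Y'$) disjoint from $\overline{F^+ \cap F^-}^Z$. Pick sequences $(a_n) = (x_n^+, y_n^+) \in F^+$ and $(b_n) = (x_n^-, y_n^-) \in F^-$ converging to $(x', y')$ in $Z$; after shrinking we may assume all $a_n, b_n$ lie in $U' \times V'$.

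The key idea is a two-stage slicing argument, using the tightness of $X \subseteq X'$ first and then of $Y \subseteq Y'$. Fix a coordinate $y_n^+ \in Y$ and consider the ``horizontal slice'' $X \times \{y_n^+\}$; its intersections with $F^+$ and $F^-$ give a closed binary cover $X \times \{y_n^+\} = (F^+)_{y_n^+} \cup (F^-)_{y_n^+}$ of a copy of $X$. Because $(x_n^+, y_n^+) \in F^+$, the point $x_n^+$ is in the $F^+$-part; and because $x_n^+ \to x'$ while $U' \times V'$ misses $F^+ \cap F^-$, for large $n$ the point $x_n^+$ (inside the open set $U'$) cannot be a limit in $X'$ of points of $(F^+)_{y_n^+} \cap (F^-)_{y_n^+}$. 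Using tightness of $X \subseteq X'$ in a quantitative form — reasoning as in the proof of Lemma~\ref{claim2}, where one extracts from the closure relation a uniform separation of the $\eps$-truncated pieces via Theorem~\ref{uplaced} (straight $\Leftrightarrow$ u-placed, applicable since $X$, being tight in $X'$ with $X'$ straight? — no: here I should avoid assuming straightness and instead argue purely from the tightness of the one-point extensions, exactly as tightness is phrased in Definition~\ref{def tight}) — I conclude that for large $n$ there is a relatively open neighbourhood $O_n^+$ of $x_n^+$ in $X$, of controlled ``size'' (contained in a fixed ball $B^{X'}_\eps(x')$), such that $O_n^+ \times \{y_n^+\} \subseteq F^+$. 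Symmetrically, slicing $F^-$ at $y_n^-$, one gets relatively open $O_n^- \ni x_n^-$ in $X$ with $O_n^- \times \{y_n^-\} \subseteq F^-$ and $O_n^- \subseteq B^{X'}_\eps(x')$.

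Since both $x_n^+$ and $x_n^-$ converge to $x'$, and $X$ is dense in $X'$, I want a single point $x \in X$ that lies in $O_n^+ \cap O_n^-$ for all large $n$; here the honest obstacle is that $O_n^+$ and $O_n^-$ are only open in $X$ and their ``radius'' need not be bounded below, so the simple ball-intersection trick of Lemma~\ref{claim2} (which exploited ULC to get genuine $\delta$-balls) is not directly available. The remedy is to encode the slicing differently: do \emph{not} slice at a fixed second coordinate, but note that $(x', y') \in \overline{F^+}^Z$ already gives, for each fixed small $\eps$, a point $x \in B^{X'}_\eps(x') \cap X$ and a $y$-value near $y'$ with $(x,y) \in F^+$; the real content of tightness of $X \subseteq X'$ is that the truncated sets $(\overline{F^+}^{X'} \setminus \overline{F^+ \cap F^-}^{X'})$-pieces stay apart, which lets one choose such an $x$ \emph{independent of the sign}. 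Concretely, I will run the argument on the product $X \times Y$ viewing it as $X \times Y \subseteq X' \times Y'$ and apply tightness of $Y \subseteq Y'$ to the vertical slice $\{x\} \times Y$ once $x$ is fixed: the cover $\{x\}\times Y = (F^+)_x \cup (F^-)_x$ has $(x, y_n^+)$-type points in the $F^+$ part and $(x, y_n^-)$-type points in the $F^-$ part, both converging in $Y'$ to a common $y''$ near $y'$, and tightness of $Y \subseteq Y'$ forces $y'' \in \overline{(F^+)_x \cap (F^-)_x}^{Y'} \subseteq \overline{F^+ \cap F^-}^Z$ — with $(x, y'') \in U' \times V'$, contradicting emptiness. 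Threading the two tightness hypotheses so that the point $x \in X$ chosen from the first stage is exactly the one that makes both vertical slices land in $F^+$ resp. $F^-$ is the main obstacle; I expect to handle it by the quantitative/u-placed reformulation (Theorem~\ref{uplaced}) applied to $F^\pm$ inside $X' \times Y'$, extracting sequences that converge coordinatewise and whose $X$-coordinates can be pinned to a single value by density plus the uniform separation, mirroring but refining the extraction already carried out in the proof of Lemma~\ref{claim2}.
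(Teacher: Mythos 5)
Your outline (slice horizontally, then vertically, threading the tightness of $X\subseteq X'$ and of $Y\subseteq Y'$) founders exactly at the step you yourself flag as the ``main obstacle'', and the remedy you propose for it does not exist. First, note that in your first stage tightness of $X\subseteq X'$ is never really used: since $U'\times V'$ misses $F^+\cap F^-$, the slices of $F^+$ and $F^-$ over a fixed $y_n^{\pm}$ are disjoint relatively closed sets covering $U'\cap X$, hence relatively clopen, and that alone gives the neighbourhoods $O_n^{\pm}$ --- with no control whatsoever on their size. The whole difficulty of the theorem is the case in which these slice neighbourhoods shrink so fast that no single $x\in X$ lies in $O_n^+\cap O_n^-$ for infinitely many $n$ (think of membership in $F^+$ versus $F^-$ at level $y_n$ being decided by finer and finer clopen splittings of $U'\cap X$ accumulating at $\overline x\in X'\setminus X$). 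Your proposed escape, a ``quantitative/u-placed reformulation'' via Theorem~\ref{uplaced}, is not available: that theorem characterizes \emph{straight} spaces, and no straightness of $X$, $Y$, $X'$, $Y'$ or their products is assumed in Theorem~\ref{NewTh}; tightness of a dense embedding gives no uniform separation of truncated sets, so there is no ``uniform separation'' with which to pin the $X$-coordinate to a single value. Once such an $x$ is found the endgame is indeed easy (and in fact the paper gets the contradiction directly from closedness of the cover, without even slicing in $Y$), so the entire content of the theorem is the step your sketch leaves open.

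For comparison, the paper proceeds differently at precisely this point. It first reduces to $Y=Y'$ (a composition of tight extensions is tight), then equalizes the second coordinates of the two sequences by an elementary dichotomy that uses no tightness at all: either some slice distances tend to $0$ along a subsequence, or they are bounded below by some $\eps$, in which case any $x\in X$ close enough to $\overline x$ satisfies $(x,y_n)\in A$, $(x,y'_n)\in B$ and one is already done. In the remaining hard case it forms the clopen partitions $A_n,B_n$ of $U\cap X$ given by the slices, builds from them the sets $C_\sigma$ ($\sigma\in 2^{<\omega}$) and $C_f=\bigcap_n C_{f\restriction n}$ ($f\in 2^\omega$), proves each nonempty $C_f$ corresponds to an eventually constant $f$ and is clopen, and then, in a three-case analysis on how the sequences $(x_n)$, $(x'_n)$ distribute among the $C_f$, assembles a closed binary cover $P,Q$ of $X$ with $\overline x\in\overline P\cap\overline Q$ but $\overline x\notin\overline{P\cap Q}$, contradicting the tightness of $X\subseteq X'$. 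That combinatorial construction is where the hypothesis is actually consumed, and nothing in your proposal substitutes for it.
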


\begin{proof}
We can assume $Y= Y'$ (since a composition of two tight extensions is tight).

Let $X\times Y = A\cup B$ with $A,B$ closed in $X\times Y$. Let $\ov A$, $\ov B$ be the closures of $A,B$ in $X'\times Y$ and assume
$(\ov x, \ov y) \in \ov A \cap \ov B$. We must prove that $(\ov x, \ov y) \in \overline{A \cap B}$. Suppose this is not the case and
let $U\times V$ be an open neighborhood of $(\ov x, \ov y)$ in
$X'\times Y$ with $$A\cap B\cap (U\times V)=\emptyset.\eqno(5)$$

Now fix $(x_n,y_n) \in A$ converging to $(\ov x, \ov y)$ for $n\to \infty$, and $(x'_n,y'_n)\in B$ also converging to $(\ov x, \ov y)$.
We can assume that these two sequences lie in $U \times V$. So, by $(5)$, $(x_n,y_n) \nin B$ and $(x'_n,y'_n)\nin A$ for every $n$.

\begin{Claim} We may choose the two sequences so that $y_n' = y_n$ for every $n$. \end{Claim} 

\begin{pf} Fix $(x_n,y_n)$ and $(x_n',y_n')$ as above. If either 
$$
\liminf_n d((x_n,y_n), B \cap (X \times \{y_n\})) = 0\;\;\mbox{ or }\;\;\liminf_n d((x'_n,y'_n), A \cap (X \times \{y'_n\})) = 0
$$ then it is easy to construct two sequences
as desired. So assume that the two limits are not zero. Then there
is a positive $\eps$ such that for every $n$ 
$$
d((x_n,y_n), B \cap (X \times \{y_n\})) > \eps\;\;\mbox{ and }\;\;d((x'_n,y'_n), A \cap (X \times \{y'_n\})) >\eps.
$$ 
By choosing a subsequence we can assume that for all $n$, 
$$
d((x_n,y_n),(\ov x, \ov y))<\eps/4\;\;\mbox{ and }\;\;d((x'_n,y'_n),(\ov x, \ov y))<\eps/4.
$$ 
Choose $x\in X$ at distance $<\eps/4$ from the common limit $\ov x = \lim_n x_n = \lim_n x'_n$.
Given $n$ it then follows that $(x,y_n)$ is at positive distance (at
least $\varepsilon /2$) from $B \cap (X \times \{y_n\})$ and therefore belongs to $A$. Similarly $(x,y'_n) \in B$. The two
sequences $(x,y_n)$ and $(x,y'_n)$ have a common limit $(x, \ov y)
\in X\times Y$, and $A,B$ are closed in $X\times Y$. So $(x,\ov y) \in A \cap B$. This contradicts (5), since $(x,\ov y) \in U\times
V$.
\end{pf}

Thanks to the claim we can assume $y_n=y'_n$. Let 
$$
A_n = \{x \in U \mid (x,y_n) \in A\}\;\;\mbox{ and }\;\;B_n = \{x \in U \mid (x,y_n) \in B\}.
$$
Then by (5) $\{A_n,B_n\}$ is a clopen partition of $U\cap X$ with 
$$
x_n\in A_n\;\;\mbox{ and }\;\;x'_n\in B_n.\eqno(6)
$$
\begin{Claim} \label{infinite} For any given $x\in U\cap X$, the sets $\{n \mid x \in A_n\}$ and $\{n \mid x \in B_n\}$ cannot both be infinite.
\end{Claim}

\begin{pf} Assume that $(x,y_{n_k})\in A$ and $(x,y_{n_m})\in B$ for infinitely many $k$ and $m$. Then $(x,\ov y)=\lim_k (x,y_{n_k})=\lim_m (x,y_{n_m}) \in A\cap B$. This contradicts ($*$), since $(x,\ov y)\in U\times V$.
\end{pf}

Making use of the sequence $\{A_n,B_n\}$ of binary clopen partitions of $U\cap X$ we produce now a clopen partition of $U\cap X$ consisiting of appropriate intersections of 
the clopen sets $A_n,B_n$. To describe more conveniently these intersections we use infinite binary sequences $f\in 2^\omega$. For a given $f$ let $f|n \in 2^n$ be its initial sequence of length $n$. 
Let $2^{<\omega}=\bigcup_n 2^n$ be the set of all finite binary sequences. If $\sigma\in 2^n$ then $n = lh(\sigma)$ is the length of $\sigma$. For $\sigma \in 2^{<\omega}$ define $C_\sigma \subset U\cap X$ inductively as follows.
\begin{itemize}
  \item $C_\emptyset = U\cap X$;
  \item If $lh(\sigma)=n$, $C_{\sigma 0} = C_\sigma \cap A_n$ and $C_{\sigma 1} = C_\sigma \cap B_n$.
\end{itemize}
Note that $\{C_\sigma \mid lh(\sigma) = n\}$ is a partition of $U\cap X$ into at most $2^n$ relatively clopen sets (some $C_\sigma$ may be empty).

Now for $f\in 2^\omega$ define $C_f = \bigcap_n C_{f|n}$. Clearly $C_f$ is closed in $U\cap X$ and $U\cap X$ is partitioned by the various $C_f$. Note that for each $x\in C_f$ we have $x\in A_n$ iff $f(n) = 0$. Hence we get immediately by Claim \ref{infinite}:
$$
C_f\ne \emptyset  \;\;  \; \Longrightarrow \; \; \; f   \;\mbox{ is eventually constant}.\eqno(7)
$$   
\begin{Claim} For all $f\in 2^\omega$, $C_f$ is open in $U\cap X$. \end{Claim}

\begin{pf} If $C_f$ is not open, then there is a point $z\in C_f$ in the closure of $(X\cap U)\setminus C_f$. Choose $z_k \in (X\cap U)\setminus C_f$ converging to $z$ for $k\to \infty$. 
By (7) we can assume without loss of generality that $f$ is eventually equal to the constant $0$, i.e. there is $N$ such that $\forall n \geq N$, $f(n) = 0$. It then easily follows that $C_f \times \{\ov y\} \subset A$. Let $\sigma = f|N$, so $f = \sigma 000000\ldots$. Now take $m\geq N$. Since $C_{f|m}$ is an open neighborhood of $z$, for all $k$ sufficiently big we have
$z_k \in C_{f|m}$. So for every big $k$, since $z_k \nin C_f$, there is some $n_k \geq m$ such that $z_k \in B_{n_k}$ (let $g\in 2^\omega$ be such
that $z_k \in C_g$ and choose $n_k$ so that $g(n_k) \neq f(n_k)$). So $(z_k, y_{n_k}) \in B$. We can arrange so that $n_k$ tends to $\infty$ (since $m$ above was arbitrary). So $(z_k, y_{n_k}) \to (z, \ov y)$. But then since $B$ is closed in $X\times Y$, $(z,\ov y) \in B$, contradicting $C_f \times \{\ov y\} \subset A$.
\end{pf}

So we have proved that $U\cap X$ is partitioned into the clopen sets $C_f$. Now consider the sequence $x_n \to \ov x \in X'$.

\medskip 

\noindent {\bf Case 1.} $\{x_n \mid n\}$ intersects infinitely many $C_f$. Then by choosing a subsequence we can assume that for $n\neq m$, $x_n$ and
$x_m$ belong to different clopen sets $C_f$ and $C_g$. Let $P = \bigcup \{ C_f \mid \exists n \,:\, x_{2n}\in C_f\}$ and let $Q = (X\cap U) \setminus P$. Then 
\NB $P\cup Q$ is a clopen partition of $X\cap U$, hence $P' = P \cup (X \setminus U)$ and $Q' = Q \cup (X \setminus U)$ is a binary closed cover of $X$
with $P'\cap Q'=X \setminus U$.  Moreover,  
$$
\ov x = \lim_n x_{2n} = \lim_n x_{2n+1} \in \ov{P}\cap \ov{Q} \subset \ov{P'}\cap \ov{Q'},
$$ 
where the closures are taken in $X'$. Since $X'$ is a
tight extension of $X$, $\ov x \in \ov{P'\cap Q'}$. This is absurd since $U$ is a neighborhood of $\ov x$ disjoint from $\ov{P'\cap Q'}$.

\medskip 

\noindent {\bf Case 2.}  $\{x'_n \mid n\}$ intersects infinitely many $C_f$. Similar to Case 1.

\medskip 

\noindent {\bf Case 3.} If Case 1 does not hold $\{x_n \mid n\}$ intersects finitely many $C_f$. So there is a single $C_f$ containing infinitely many
$x_n$. Let $I \subset \N$ be the infinite set $I = \{n \mid x_n \in C_f\}$. Assuming that Case 2 does not hold, there is some $C_g$
containing $x'_n$ for $n$ ranging in an infinite subset $J$ of $I$. Moreover $g$ must be different from $f$ by (6). Let 
$$
P = C_f \cup (X \setminus U)\;\;\mbox{ and }\;\;Q = \bigcup_{h \neq f} C_h \cup (X \setminus U).
$$ 
Then $P,Q$ form a closed binary cover of $X$ and $\ov x = \lim_n x_{n} = \lim_n x_{n'} \in \ov{P}\cap \ov{Q}$, where the closures are taken in $X'$. Since $X'$
is a tight extension of $X$, $\ov x \in \ov{P\cap Q}$. This is absurd since $U$ is a neighborhood of $\ov x$ disjoint from
$\ov{P\cap Q}$.
\end{proof}

A direct application of this theorem and Theorem \ref{str-dense} implies  that finite products of precompact straight spaces are straight:

\begin{theorem}\label{yama-ales} Let $X,\ Y$ be precompact straight spaces. Then $X\times Y$ is precompact straight, too.
\end{theorem}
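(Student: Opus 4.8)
The plan is to deduce Theorem~\ref{yama-ales} directly from Theorem~\ref{NewTh} together with Corollary~\ref{NEWcorol} and Theorem~\ref{str-dense}. First I would pass to completions: since $X$ and $Y$ are precompact, their completions $\widetilde X$ and $\widetilde Y$ are compact, hence straight (being compact). By Corollary~\ref{NEWcorol}, because $X$ is straight, $\widetilde X$ is a tight extension of $X$; likewise $\widetilde Y$ is a tight extension of $Y$. Thus $X$ is a dense tight subspace of the metric space $\widetilde X$, and $Y$ is a dense tight subspace of $\widetilde Y$.

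Next I would apply Theorem~\ref{NewTh} with $X' = \widetilde X$ and $Y' = \widetilde Y$: it gives that $\widetilde X\times\widetilde Y$ is a tight extension of $X\times Y$. Now $\widetilde X\times\widetilde Y$ is compact (a product of two compact spaces), hence straight, and it is clearly the completion of $X\times Y$ (a product of completions, with the sum metric, is complete and contains $X\times Y$ densely). Since $X\times Y$ is dense in the straight space $\widetilde X\times\widetilde Y$ and the extension is tight, Theorem~\ref{str-dense} yields that $X\times Y$ is straight. Finally $X\times Y$ is precompact because its completion $\widetilde X\times\widetilde Y$ is compact. This completes the argument.

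There is essentially no obstacle here once Theorem~\ref{NewTh} is in hand: the proof is just a matter of assembling the cited results in the right order. The only minor point to be careful about is the identification of $\widetilde X\times\widetilde Y$ with the completion of $X\times Y$, which follows immediately from the fact that a finite product of complete metric spaces (with the sum metric) is complete and that $X\times Y$ is dense in it; this is routine and could be invoked without comment. The conceptual weight of the theorem lies entirely in Theorem~\ref{NewTh} (productivity of tight extensions), which has already been established above.
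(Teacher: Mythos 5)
Your argument is correct and coincides with the paper's intended proof: the paper also deduces Theorem~\ref{yama-ales} as a direct application of Theorem~\ref{NewTh} together with Theorem~\ref{str-dense} (via compact, hence straight, completions). Your spelled-out assembly, including the use of Corollary~\ref{NEWcorol} and the identification of $\widetilde X\times\widetilde Y$ with the completion of $X\times Y$, is exactly what the paper leaves implicit.
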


This establishes the sufficiency of (a) in Theorem A. In particular, Theorem~\ref{yama-ales} gives

\begin{corollary} All finite powers of a  straight space $X$ are straight whenever $X$ is precompact or ULC.
\end{corollary}

This should be compared with the limits for multiplicativity of the ALC property, given in Remark~\ref{cor-prec-power}: $X\times X$ is
very rarely ALC as $X$ must be compact or ULC to have this property.

As another immediate corollary we obtain a proof of the following criterion due to Nishijima and Yamada:

\begin{corollary}\label{yam} {\rm \cite{Y}} Let $X$ be a straight space. Then $X\times K$ is straight for each compact space $K$ if and only if $X\times
(\omega +1)$ is straight.
\end{corollary}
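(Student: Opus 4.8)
\textbf{Proof plan for Corollary \ref{yam}.}

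The plan is to derive both implications from the material already assembled, chiefly Theorem \ref{yama-ales} (finite products of precompact straight spaces are straight), Corollary \ref{Coro1} (a straight product forces both factors straight), and the Yamada-type fact recorded in Example \ref{exYamada} (if $X\times(\omega+1)$ is straight then $X$ is precompact). The ``only if'' direction is immediate: $\omega+1$ is a compact space, so if $X\times K$ is straight for every compact $K$ it is straight for $K=\omega+1$ in particular. All the content is in the ``if'' direction.

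So assume $X$ is straight and $X\times(\omega+1)$ is straight, and let $K$ be an arbitrary compact space; I must show $X\times K$ is straight. First I would invoke Example \ref{exYamada} (equivalently Corollary \ref{Coro2}): since $X\times(\omega+1)$ is straight, $X$ is precompact. Now $K$, being compact, is in particular precompact and straight (compact spaces are straight by the first Example in the paper). Hence $X$ and $K$ are two precompact straight spaces, and Theorem \ref{yama-ales} applies directly to give that $X\times K$ is (precompact and) straight. That completes the argument.

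The only point requiring a moment's care — and the one I would flag as the ``main obstacle,'' though it is really just bookkeeping — is making sure the precompactness of $X$ is genuinely extracted from the hypothesis rather than assumed: the whole force of the corollary is that straightness of the single product $X\times(\omega+1)$ already pins down precompactness of $X$, after which the strong productivity theorem for precompact straight spaces does the rest. One should also note that no local-connectedness hypothesis enters here; the dichotomy of Corollary \ref{Thm_loc_conn} is used only implicitly, through Example \ref{exYamada}, to conclude precompactness from the failure of $\omega+1$ to be ULC. With these observations the proof is a two-line deduction and needs no further computation.
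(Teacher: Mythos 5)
Your argument is correct and coincides with the paper's own (first) proof: from straightness of $X\times(\omega+1)$ one extracts precompactness of $X$ (via Corollary \ref{Thm_loc_conn}, as in Example \ref{exYamada}), and then Theorem \ref{yama-ales} applied to the precompact straight spaces $X$ and $K$ gives straightness of $X\times K$, the converse being trivial since $\omega+1$ is compact. The paper also records a second, independent proof avoiding Theorem \ref{yama-ales} (extracting a copy of $X\times(\omega+1)$ inside $X\times K$ from a witnessing non-u-placed cover), but your route is exactly the first one.
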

\begin{proof}[{I proof.}] Assume $X\times (\omega +1)$ is straight. Then $X$ must be precompact by Corollary
\ref{Thm_loc_conn}. Now Theorem~\ref{yama-ales} applies.
\end{proof}

We give also a second proof that does not make recourse to Theorem~\ref{yama-ales}:

 \begin{proof}[{II proof.}] \AB Suppose $X$ is straight, $K$ is compact and $X\times K$ is not straight. Take a binary closed cover $C^+,\ C^-$ of $X\times K$
witnessing it, i.e. there are $\eps >0$ and adjacent sequences $(u_i)_{i\in\N}$ and $(v_i)_{i\in\N}$ such that $\{u_i\}_{i\in\N}\subseteq C^+_\eps$ and $\{v_i\}_{i\in\N}\subseteq
C^-_\eps$. As $K$ is compact, we may suppose (choosing a subsequence, if necessary) that sequences $(\pi _K u_i)$ and $(\pi
_K v_i)$ converge in $K$; the limits of these sequences have to coincide. Denote this limit as $k$. Define a subspace $Z= X\times
\bigl( \{k\}\cup \{\pi _K u_i\}\cup \{\pi _K v_i\}\bigr)$. $Z$ is homeomorphic to $X\times (\omega +1)$ and witnesses non-straightness
of $X\times K$.
\end{proof}

\subsection{Characterization of precompact ULC spaces}

In the sequel we prove the sufficiency of item (c) of our Main Theorem. In doing this we obtain also a characterization of the
precompact ULC spaces in terms of straightness of products.

\begin{lemma} \label{claim1}
Let $X$ be a compact ULC metric space. Then $X\times Y$ is straight for every
straight space $Y$.
\end{lemma}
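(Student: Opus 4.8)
The plan is to pass to the completion and assemble results that are already available. Since $X$ is compact it is complete, so $\widetilde{X\times Y}=\widetilde X\times \widetilde Y=X\times \widetilde Y$, where $\widetilde Y$ denotes the completion of $Y$. Because $Y$ is straight, Corollary~\ref{NEWcorol} gives that $\widetilde Y$ is straight and that the extension $Y\subseteq \widetilde Y$ is tight; and being complete and straight, $\widetilde Y$ is ALC by Corollary~\ref{coro_comp_try}.

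Next I would show that $X\times\widetilde Y$ is straight. It is complete, being a product of a compact space and a complete space, so by Corollary~\ref{coro_comp_try} it suffices to check that it is ALC. This is immediate from Theorem~\ref{ALC/WULC_prod} applied with $n=2$: one factor, $\widetilde Y$, is ALC, while the other factor, $X$, is both compact and ULC, which is exactly case (c). Hence $X\times\widetilde Y$ is ALC, and therefore straight.

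Then I would check that $X\times\widetilde Y$ is a tight extension of $X\times Y$. Here Theorem~\ref{NewTh} applies directly: $X$ is (trivially) a dense tight subspace of $X$, and $Y$ is a dense tight subspace of $\widetilde Y$ by the first step, so the product $X\times\widetilde Y$ is a tight extension of $X\times Y$. Finally, $X\times Y$ is dense in the straight space $X\times\widetilde Y$ and the extension is tight, so $X\times Y$ is straight by Theorem~\ref{str-dense} (equivalently, by Corollary~\ref{NEWcorol}, since $X\times\widetilde Y=\widetilde{X\times Y}$).

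The argument is essentially bookkeeping with the tools already developed; the one point that must be used carefully is that it is compactness of $X$, not merely completeness, that lets Theorem~\ref{ALC/WULC_prod}(c) apply to the completed product — this is where the hypothesis genuinely enters, since for a non-compact complete ULC space the product with an ALC space need not be ALC (compare Corollary~\ref{cor-prec-power}). A more pedestrian alternative would be to verify the u-placedness criterion (Theorem~\ref{uplaced}) directly for a closed binary cover of $X\times Y$, fattening connected pieces in the $X$-direction via the ULC-ness of $X$ as in the proof of Lemma~\ref{claim2}; but routing through the completion is shorter and reuses Theorem~\ref{NewTh}.
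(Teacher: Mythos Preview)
Your proof is correct and takes a genuinely different route from the paper. The paper argues directly with the u-placed criterion (Theorem~\ref{uplaced}): assuming $X\times Y$ is not straight, it fixes a non-u-placed closed cover $C^+,C^-$ with witnessing adjacent sequences $(x_1^n,y_1^n)\in C^+$, $(x_2^n,y_2^n)\in C^-$ at distance $\geq\eta$ from $C^+\cap C^-$, uses ULC-ness of $X$ to show that for small $\lambda$ each horizontal slice $B_\lambda(x_1^n)\times\{y\}$ with $y\in B_\lambda(y_1^n)$ meets only one of $C^+,C^-$, and then uses compactness of $X$ to extract a limit $x=\lim x_1^n$ so that the induced cover of $\{x\}\times Y\cong Y$ is not u-placed, contradicting straightness of $Y$. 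Your argument instead passes to the completion and assembles Theorem~\ref{ALC/WULC_prod}(c), Corollary~\ref{coro_comp_try}, Theorem~\ref{NewTh} and Theorem~\ref{str-dense}; there is no circularity, since none of those results depend on Lemma~\ref{claim1}. The paper's hands-on approach is self-contained and independent of the product-of-tight-extensions machinery; your approach is shorter and makes transparent that, once Theorem~\ref{NewTh} is available, Lemma~\ref{claim1} is essentially a corollary. Indeed, pushing your method one step further (taking the completion on the $X$-side as well and invoking Theorem~\ref{NewTh} or Lemma~\ref{claim2}) yields Theorem~\ref{pre-ULC} directly, bypassing the separate compact case.
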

\begin{proof}
Suppose for a contradiction that $X\times Y$ is not straight for some straight
space $Y$.

Since $X\times Y$ is not straight, by Theorem \ref{uplaced} there are closed
sets $C^+,\ C^-\subseteq X\times Y$ such that $C^+\cup C^-=X\times Y$ and
$C^+,\ C^-$ are not u-placed. So there is $\eta> 0$ and a pair of adjacent
sequences $(x^n_1 , y^n_1)\in C^+$ and $(x^n_2 , y^n_2)\in C^-$ such that
\begin{equation}\label{distance}
\mathrm{dist}\bigl((x^n_i , y^n_i), C^+\cap C^- \bigr) \geq\eta_0 , \ i=1,2.
\end{equation}
Since $X$ is ULC there is $\lambda > 0$ such that for all $x,x'\in X$ with $\rho(x,x') < \lambda$ there exists a connected subset
$C_{x,x'}$ of $X$ such that $\{v,w\}\subseteq C_{x,x'}$ and $\diam (C_{x,x'}) \leq \frac \eta 4$.

We claim that $\forall y\in B^\sigma_\lambda (y^n_1)$, $B^\rho_\lambda (x^n_1)\times\{y\}$ cannot intersect both $C^+$ and $C^-$.

In fact, if for a contradiction there were $(w,y)\in C^+$ and $(v,y)\in C^-$ with $\{v,w\}\subseteq B^\rho_\lambda (x^n_1)$, then $C_{w,v}\times\{ y \}\cap (C^+\cap C^-)\not =\emptyset$,
contradicting (\ref{distance}) and proving the claim.

We can conclude that for each $n$ sufficiently large there are $y_{C^+},\ y_{C^-} \in B^\sigma_\lambda (y^n_1)$ such that $B^\rho_\lambda (x^n_1)\times
y_{C^+} \subseteq C^+$ and $ B^\rho_\lambda (x^n_1)\times y_{C^-}\subseteq C^-$ (take $y_{C^+}=y_1^n, y_{C^-} = y_2^n$).

As $X$ is compact, the sequence $(x_1^n)$ has a subsequence converging to some $x\in X$ (the corresponding subsequence of $(x_2^n)$ also converges to $x$).
Then $(\{x\}\times Y)\cap C^+$ and $(\{x\}\times Y)\cap C^-$ are closed sets which are not u-placed, contradicting the straightness of $Y$.
\end{proof}

According to Corollary \ref{Thm_loc_conn} and Lemma \ref{prod_ULC}, if $X$ is a non-precompact ULC space, then $X\times Y$ is straight iff $Y$ is ULC. The next
theorem shows that adding precompactness changes completely the situation:

\begin{theorem}\label{pre-ULC}
If $X$ is precompact and ULC, then $X \times Y$ is straight for every straight space $Y$.
\end{theorem}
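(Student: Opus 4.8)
The plan is to reduce to the compact case already settled in Lemma~\ref{claim1} by passing to the completion of $X$, and then to transport straightness back to $X\times Y$ via the tight–extension criterion of Theorem~\ref{str-dense}.

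First I would record the properties of the completion $\widetilde X$ of $X$. Since $X$ is precompact and $\widetilde X$ is complete, $\widetilde X$ is compact; since $X$ is ULC, $\widetilde X$ is ULC by Lemma~\ref{dense-super}(1), and by the same lemma $\widetilde X$ is a tight extension of $X$. Thus $\widetilde X$ is a \emph{compact ULC} metric space, so Lemma~\ref{claim1} applies to it: for the given straight space $Y$, the product $\widetilde X\times Y$ is straight.

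Next, the inclusion $X\times Y\subseteq \widetilde X\times Y$ is dense, and since $X$ is ULC and dense in $\widetilde X$, Lemma~\ref{claim2} (applied with that dense ULC subspace and with $Z=Y$) shows that $\widetilde X\times Y$ is a tight extension of $X\times Y$. Now apply the ``if'' direction of Theorem~\ref{str-dense} to the dense inclusion $X\times Y\subseteq \widetilde X\times Y$: the target $\widetilde X\times Y$ is straight and the extension is tight, hence $X\times Y$ is straight, as desired.

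There is essentially no obstacle left once Lemmas~\ref{claim1} and~\ref{claim2} are in hand; the only elementary points to verify are that a complete precompact space is compact and that ULC passes to completions, both of which are available from the earlier material. (Alternatively, in the final tightness step one could replace Lemma~\ref{claim2} by Theorem~\ref{NewTh}, using that $X$ is a tight subspace of $\widetilde X$ and $Y$ is a tight subspace of itself, to get the same conclusion that $\widetilde X\times Y$ is a tight extension of $X\times Y$.)
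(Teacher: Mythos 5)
Your proof is correct and follows essentially the same route as the paper: pass to the compact ULC completion $\widetilde X$, apply Lemma~\ref{claim1} to get straightness of $\widetilde X\times Y$, use Lemma~\ref{claim2} to see that $\widetilde X\times Y$ is a tight extension of $X\times Y$, and conclude by Theorem~\ref{str-dense}. The remark that Theorem~\ref{NewTh} could replace Lemma~\ref{claim2} is a fine minor variant, but the core argument is identical to the paper's.
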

\begin{proof} By Corollary \ref{completionULC} the (compact) completion $\tilde X$ of $X$ must be ULC. By Lemma \ref{claim1}, $\tilde X \times Z$ is straight for every straight space
$Z$. By Lemma \ref{claim2} $\tilde X \times Z$ is a tight extension of $X \times Z$. So by Theorem \ref{str-dense} $X \times Z$ is straight.
\end{proof}

Theorem~\ref{yama-ales} complements Theorem~\ref{pre-ULC} as it relaxes the hypothesis on the first space: instead of precompact ULC,
only precompact straight is used, however the second factor in Theorem~\ref{yama-ales} has to be not only straight, but also precompact (compact, respectively).

 In the next corollary we characterize the precompact ULC spaces as those spaces $X$ such that $X\times Y$ is straight for every straight space $Y$.

\begin{corollary}\label{COR.pre-ULC} For every metric space $X$ the following are equivalent:
\begin{itemize}
\item[(a)]  $X\times Y$ is straight for every straight space $Y$;
\item[(b)]  $X\times Y$ is straight for every complete straight space $Y$;
\item[(c)]  $X\times C$ and $X\times \N$ are straight ($C$ is Cantor space);
\item[(d)] $X$ is precompact and ULC.
\end{itemize}
\end{corollary}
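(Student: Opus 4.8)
The plan is to prove Corollary \ref{COR.pre-ULC} by a cycle of implications, most of which are already available from the results collected above. Specifically, I will show (d) $\Rightarrow$ (a) $\Rightarrow$ (b) $\Rightarrow$ (c) $\Rightarrow$ (d). The implication (d) $\Rightarrow$ (a) is precisely Theorem \ref{pre-ULC}, so nothing new is needed there. The implication (a) $\Rightarrow$ (b) is trivial, since every complete straight space is in particular a straight space. The implication (b) $\Rightarrow$ (c) is also immediate once we observe that the Cantor space $C$ and the discrete space $\N$ are complete straight spaces: $C$ is compact, hence a UC-space, hence straight (Example following Definition \ref{SCS_uniformly locally connected}); $\N$ is uniformly discrete, hence a UC-space, hence straight; and both are obviously complete. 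So applying (b) with $Y = C$ and with $Y = \N$ gives (c).

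The only implication requiring genuine work is (c) $\Rightarrow$ (d). Here I would argue as follows. Assume $X \times C$ and $X \times \N$ are both straight. From the straightness of $X \times C$, Corollary \ref{Coro2} (or equivalently Corollary \ref{coroll_lc}) applies: since $C$ is a non-discrete zero-dimensional space, that corollary yields that $X$ is precompact. It remains to show $X$ is ULC. For this I invoke the straightness of $X \times \N$: since $\N$ is not precompact, Corollary \ref{Thm_loc_conn} (with the roles arranged so that the non-precompact factor is $\N$) forces $X$ to be ULC. Thus $X$ is precompact and ULC, which is (d). This closes the cycle.

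The main obstacle — such as it is — is simply making sure the hypotheses of the two cited corollaries are met and invoked in the right direction: Corollary \ref{Coro2} needs a non-discrete zero-dimensional second factor to conclude precompactness of $X$, which $C$ supplies; Corollary \ref{Thm_loc_conn} needs a non-precompact factor to conclude the ULC-ness of the other factor, which $\N$ supplies. Everything else is a bookkeeping matter of recalling that $C$ and $\N$ are complete straight spaces. One could also remark, as a sanity check, that (c) is the weakest-looking condition and it already pins down both the precompactness (via the connected-component structure of $C$, or rather its total disconnectedness) and the ULC property (via the non-precompactness of $\N$), so no redundancy is wasted: each of the two test products $X \times C$ and $X \times \N$ is genuinely needed.

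In summary, the proof is the chain (d) $\Rightarrow$ (a) by Theorem \ref{pre-ULC}; (a) $\Rightarrow$ (b) trivially; (b) $\Rightarrow$ (c) by testing against the complete straight spaces $C$ and $\N$; and (c) $\Rightarrow$ (d) by combining Corollary \ref{Coro2} (precompactness of $X$, from straightness of $X \times C$) with Corollary \ref{Thm_loc_conn} (ULC-ness of $X$, from straightness of $X \times \N$ together with non-precompactness of $\N$).
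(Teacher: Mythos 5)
Your proof is correct and follows essentially the same route as the paper: (d) $\Rightarrow$ (a) via Theorem \ref{pre-ULC}, the trivial implications (a) $\Rightarrow$ (b) $\Rightarrow$ (c), and (c) $\Rightarrow$ (d) via Corollary \ref{Thm_loc_conn} (for ULC, using the non-precompact factor $\N$) together with Corollary \ref{Coro2} (for precompactness, using $X\times C$). You merely spell out the precompactness step a little more explicitly than the paper does, which is fine.
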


\begin{proof} The implications (a) $\to $ (b) $\to $ (c) are obvious. The ULC-part of the implication (c) $\to $ (d) follows from Corollary
\ref{Thm_loc_conn}.  The implication (d) $\to $ (a) is covered by the above theorem.
\end{proof}

\begin{remark} One can replace item (c) in the above corollary by the single condition $X\times C\times \N$ is straight. Note that one cannot just remove the
condition of straightness on $X\times \N$ by leaving in (c) only ``$X\times C$ is straight'' (indeed $C \times C$ is straight, but $C$ is not ULC).  
\end{remark}

We conclude by a characterization of the larger class of precompact straight spaces by means of straightness of  finite products. Using
Corollary~\ref{Coro1} and Corollary~\ref{Coro2}, we obtain immediately:

\begin{corollary}\label{coro-yam} For a metric space $X$, the following two properties are equivalent:

\begin{description}
      \item[(i)] $X$ is straight and precompact,
      \item[(ii)] $X \times K$ is straight for every compact space $K$.
\end{description}
\end{corollary}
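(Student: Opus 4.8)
The plan is to obtain this equivalence as an immediate consequence of results already established, with each implication occupying a single short step.

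For the implication (i) $\Rightarrow$ (ii), I would start from the trivial observation that any compact space $K$ is both precompact and straight (every compact space is straight, as recorded in the Example following Definition~1.1). Hence, assuming $X$ is straight and precompact, Theorem~\ref{yama-ales} applies verbatim to the pair $X,\,K$ and yields that $X\times K$ is precompact and straight; in particular $X\times K$ is straight. Since $K$ was an arbitrary compact space, (ii) follows.

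For the implication (ii) $\Rightarrow$ (i), I would instantiate (ii) at the single compact test space $K=\omega+1$. Then $X\times(\omega+1)$ is straight. Corollary~\ref{Coro1} shows that $X$, being a factor of a straight product, is straight; and the ``in particular'' clause of Corollary~\ref{Coro2} — which states that straightness of $X\times(\omega+1)$ forces $X$ to be precompact, $\omega+1$ being a non-discrete zero-dimensional space — shows that $X$ is precompact. Together these give (i).

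There is essentially no obstacle here: the whole content is already packaged in Theorem~\ref{yama-ales} and Corollaries~\ref{Coro1}, \ref{Coro2}, which is exactly why the text preceding the statement announces that it follows ``immediately''. The one point worth flagging is the choice of test space in the backward direction: one must pick a compact space that is not locally connected (so that Corollary~\ref{Coro2} can be invoked to extract precompactness), and the convergent sequence $\omega+1$ is the most economical such choice, although the Cantor space $C$ would serve equally well. It is also worth remarking that this corollary chains with Corollary~\ref{yam}: for straight $X$, ``$X\times K$ straight for every compact $K$'' $\Leftrightarrow$ ``$X\times(\omega+1)$ straight'' $\Leftrightarrow$ ``$X$ precompact''.
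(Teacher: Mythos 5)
Your proof is correct and follows essentially the same route as the paper: the forward implication is exactly the paper's appeal to Theorem~\ref{yama-ales} (compact spaces being precompact and straight), and the backward implication via the test space $\omega+1$ with Corollary~\ref{Coro1} for straightness and Corollary~\ref{Coro2} (equivalently, Corollary~\ref{Thm_loc_conn}, from which it derives) for precompactness is precisely what the paper does. No gaps.
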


The implication (i) $\to$ (ii) follows from the above theorem. To prove the implication (ii) $\to$ (i) note that the straightness of
the product $X \times (\omega + 1)$ alone implies precompactness of $X$ by Corollary \ref{Thm_loc_conn}.

\begin{remark}  Corollary~\ref{Cor_loc_conn} (or Corollary~\ref{Thm_loc_conn}) explains why the restriction to precompact spaces is necessary.
Recall that if $X\times Y$ is straight for a non-precompact space $Y$ then 
$X$ is ULC, so we would face again the assumptions of
Theorem~\ref{pre-ULC}.\\
 Let us recall the following fact from \cite{BDP3}: each straight totally disconnected space is UC. In particular, all
precompact straight totally disconnected spaces are compact. Having in mind also Corollary~\ref{COR.pre-ULC}, we see that
Theorem~\ref{yama-ales} says something interesting for spaces which are neither totally disconnected nor locally connected.
\end{remark}

\section{When infinite products are straight}\label{product}

\subsection{When infinite products are ALC, WULC or ULC}\label{product1}

We start by describing the stronger ALC, WULC and ULC properties for infinite products.   
The spaces $X$ such that $X^\omega$ is ULC are described below (see Corollary \ref{auxil-con}).

We have seen that a product $X\times Y$ is ULC iff both $X$ and $Y$ are ULC. The next example shows that this fails for infinite
products.

\begin{example}\label{Xomega} There is a ULC space $X$ without isolated points such that $X^\omega$ is not straight (hence not ULC).
\end{example}
\begin{proof} The starting example is $\N$ with the uniformly discrete uniformity. Certainly, $\N$ is ULC. Consider the 
infinite product
$\N^{\omega}$. 

Put $X=\bigoplus_\N [0,1]$, i.e. $X$ is a countable discrete sum of unit intervals. Then $X$ is ULC and it has no isolated points.
Consider the map $q:X\to \N$ collapsing the ${n}^{\rm{th}}$ copy of $[0,1]$ to $n$ for every $n\in\N$. Define a map $r:X^\omega \to
\N^{\omega}$ as $r=q^\omega \colon (x_i)_{i\in \omega} \mapsto (q(x_i))_{i \in \omega}$. The space $\N^{\omega}$ is not straight:
this is witnessed by a partition into two clopen sets $A,B$ with $d(A,B)=0$. Then $r^{-1}(A)$ and $r^{-1}(B)$ define a partition of
$X^\omega$ into closed sets, and by the definition of $r$ it is easy to see that $d(r^{-1}(A), r^{-1}(B)) = 0$. So $X^\omega$ is
not straight. \end{proof}

The example suggests the following more general criterion for straightness of infinite products of ULC spaces.

\begin{lemma}\label{Criterion} For a countable family $\{X_i:i\in I\}$ of ULC spaces the product is straight only if all but
finitely many of them have finitely many connected components.
\end{lemma}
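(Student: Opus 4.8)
The plan is to prove the contrapositive: if infinitely many of the $X_i$ have infinitely many connected components, then $X=\prod_i X_i$ is not straight. Let $J=\{i : X_i \text{ has infinitely many components}\}$, which is infinite by assumption. The conceptually key step is a uniform consequence of the ULC property: \emph{in a ULC space there is a single $\delta_0>0$ such that any two points lying in distinct connected components are at distance $\ge \delta_0$.} Indeed, applying Definition \ref{SCS_uniformly locally connected} with $\eps$ equal to, say, $1$ produces $\delta_0>0$ such that any two points at distance $<\delta_0$ lie in a common connected set; since a connected set is contained in a single component, two points in distinct components must be at distance $\ge \delta_0$. Because a ULC space is locally connected, its components are open, hence clopen, so for each $i\in J$ the map $q_i\colon X_i\to D_i$ collapsing each component to a point is well defined onto an \emph{infinite} set $D_i$. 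Equip $D_i$ with the uniformly discrete metric of constant spacing $1$ (recall metrics are bounded by $1$). The uniform gap $\delta_0$ shows $q_i$ is uniformly continuous (points at distance $<\delta_0$ have equal image). Choosing a representative $c_{i,p}$ in each component $p$ gives a section $s_i\colon D_i\to X_i$ with $d_{X_i}(s_i(p),s_i(p'))\le 1=d_{D_i}(p,p')$ for $p\ne p'$.

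Next I assemble these into a single map. Let $P=\prod_{i\in J}D_i$ be the countable product of infinite uniformly discrete spaces, with the weighted product metric, and define $r\colon X\to P$ by $r((x_i)_i)=(q_i(x_i))_{i\in J}$. Since each $q_i$ is u.c. and agreement on the first $N$ coordinates forces closeness in $P$, the map $r$ is uniformly continuous. I claim $r$ \emph{allows lifting of adjacent sequences}. Given adjacent $a^{(n)},b^{(n)}\in P$, lift them coordinatewise by the sections, setting $\hat a^{(n)}_i=s_i(a^{(n)}_i)$ and $\hat b^{(n)}_i=s_i(b^{(n)}_i)$ for $i\in J$ and any fixed point in the remaining coordinates. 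Enumerating $I$ and viewing $J$ as a subsequence, the weight of each $i\in J$ in the metric of $X$ is at most its weight in the metric of $P$; combined with $d_{X_i}(s_i(a_i),s_i(b_i))\le d_{D_i}(a_i,b_i)$ this yields $d_X(\hat a^{(n)},\hat b^{(n)})\le d_P(a^{(n)},b^{(n)})\to 0$. Thus $\hat a^{(n)},\hat b^{(n)}$ are adjacent lifts of $a^{(n)},b^{(n)}$, establishing the claim (indeed without even passing to a subsequence).

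Finally, $P$ is not straight, which is exactly the phenomenon behind Example \ref{Xomega}: a countable product of infinite uniformly discrete spaces carries a clopen partition $A,B$ with $d(A,B)=0$. Concretely, after identifying each factor with $\N$, set $A=\{x: x_{x_1}\text{ even}\}$ and $B=\{x: x_{x_1}\text{ odd}\}$; each is a union of sets depending on finitely many coordinates, hence clopen, and for large $N$ the points $x$ with $x_1=N$, agreeing up to coordinate $N-1$ and differing in parity at coordinate $N$, satisfy $x\in A$, the matching $y\in B$, and $d(x,y)\le \sum_{k\ge N}2^{-k}=2^{-(N-1)}\to 0$. Now Lemma \ref{lemma_imag} applies: if $X$ were straight, then, since $r$ is u.c. and allows lifting of adjacent sequences, $P$ would be straight — a contradiction. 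Hence $X$ is not straight, proving the contrapositive. The main obstacle is securing uniformity simultaneously across infinitely many factors: one must extract from ULC a \emph{single} gap $\delta_0$ in each factor and, more delicately, normalize the $D_i$ and choose the sections $s_i$ so that the lifted sequences stay adjacent, which is precisely what the spacing-$1$ choice together with the coordinatewise weight comparison guarantees.
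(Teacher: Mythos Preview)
Your proof is correct and follows essentially the same route as the paper's: both extract from ULC a uniform lower bound on the distance between distinct components, build a uniformly continuous map from $X$ onto a countable product of infinite uniformly discrete spaces that allows lifting of adjacent sequences, and then invoke Lemma~\ref{lemma_imag} together with the non-straightness of that product. The only differences are cosmetic---the paper reduces by WLOG to the case where every factor has infinitely many components and maps directly to $\N^\N$, whereas you keep the subset $J$ and supply explicit details (the sections $s_i$ for the lifting and the concrete clopen partition of $P$) that the paper leaves implicit.
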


  \begin{proof} Assume that infinitely many $X_i$ have infinitely many connected components. It is not restrictive to
  assume that every $X_i$ has infinitely many connected components.  Now we need the following

  \medskip

  \underline{Claim.} If $(X,d)$ is a ULC space, then there exists  a positive $\delta$ such that any two distinct connected components
  of $X$ are at distance $\geq \delta$.

  \smallskip

  \begin{pf}
  Assume for a contradiction that for every $\delta>0$ there exists  a pair of distinct connected components $C, C'$
  of $X$ with $d(C,C')\leq \delta$. Since for $x\in C$ and $y\in C'$ there exists no connected set containing both $x$ and $y$, we
conclude that $X$ is not ULC, a contradiction. This proves the Claim.
  \end{pf}

  \medskip

  By the Claim every $X_i$ admits a uniformly continuous  surjective map $f_i:X_i\to \N$. Let $f:X=\prod_i X_i\to \N^\N$ be
  the product map. Then $f$ is uniformly continuous and allows
  for lifting of adjacent sequences. Hence by Lemma \ref{lemma_imag}  $\N^\N$ is straight, a contradiction.
  \end{proof}

One can ask whether an infinite product of ULC
spaces is straight  precisely when the necessary condition from the above lemma is  satisfied.
It turens out that this fails even for infinite powers. A counter-example 
to this effect is given in Example \ref{EXA} below. 

 Note that the property ULC was necessary in order to establish the necessity  of the condition in Lemma \ref{Criterion}. 
 Straightness of an infinite product of {\em straight}
spaces does not lead to the same condition ($C^\omega$ is straight, 
even compact, with infinitely many connected components). Indeed, the Claim does not hold
for {\em straight} spaces (an example of an infinite straight precompact group with trivial connected components is given in
\cite{DP_STG}). (If Question \ref {Ques-inf} has a positive answer, then 
this condition is not necessary since then the infinite power of every 
precompact straight space would be straight.) 

Note that the next theorem covers item (a) of Theorem B.

\begin{theorem}\label{infpr-ULC} If $X_n$ is a metric space for every $n$, then for the space $X=\prod_n X_n$ the following are equivalent:
\begin{itemize}
     \item[(a)] $X$ is ULC;
     \item[(b)] each space $X_n$ is ULC and all, but finitely many, spaces are connected.
\end{itemize}
\end{theorem}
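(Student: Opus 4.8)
\textbf{Proof plan for Theorem \ref{infpr-ULC}.}

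The plan is to prove the two implications separately, with the bulk of the work going into (b) $\Rightarrow$ (a). For the direction (a) $\Rightarrow$ (b), I would argue as follows. Since each projection $\pi_n\colon X\to X_n$ is a uniformly continuous map allowing the lifting of adjacent sequences (Example \ref{Example1}), and since ULC is preserved under such maps (Corollary \ref{lift_ULC}), each $X_n$ is ULC. It remains to show that all but finitely many $X_n$ are connected. Suppose not; then infinitely many $X_n$ are disconnected. A ULC space is in particular locally connected and, by the Claim inside the proof of Lemma \ref{Criterion}, its distinct connected components are uniformly separated, so each of the infinitely many disconnected $X_n$ admits a uniformly continuous surjection onto $\{0,1\}$ (with the discrete metric bounded by $1$), hence onto a two-point uniformly discrete space. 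Taking the product of these maps (and the constant map on the remaining coordinates) yields a uniformly continuous surjection $f\colon X\to \{0,1\}^\N$ (equivalently onto the Cantor space $C$) that allows lifting of adjacent sequences; by Corollary \ref{lift_ULC}, $C$ would be ULC, which is false since $C$ is totally disconnected and not a single point. This contradiction proves (b).

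For (b) $\Rightarrow$ (a), assume each $X_n$ is ULC and that $X_n$ is connected for all $n>m$. Write $X = \left(\prod_{n\le m}X_n\right)\times\left(\prod_{n>m}X_n\right) = P\times Q$. The factor $P$ is a finite product of ULC spaces, hence ULC by Lemma \ref{prod_ULC}, and a finite product of a ULC space with a ULC space is ULC again by the same lemma, so it suffices to show that $Q=\prod_{n>m}X_n$ is ULC. Here is where connectedness of the factors is used. Fix $\eps>0$. Choose $N$ so large that $\sum_{n>N}2^{-n}<\eps/2$ (recall the metric on the infinite product is $d(x,y)=\sum_n 2^{-n}d_n(x_n,y_n)$ with each $d_n\le 1$). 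Apply the ULC property (in the form of Lemma \ref{unif.l.c.}, or directly) to each of the finitely many spaces $X_{m+1},\dots,X_N$ to get a common $\delta>0$ so that any two points at distance $<\delta$ in $X_n$ ($m<n\le N$) lie in a connected subset of $X_n$ of diameter $<\eps/(2N)$. Given two points $u,v\in Q$ with $d(u,v)<\delta'$ for a suitably small $\delta'$ (chosen so that each coordinate distance $d_n(u_n,v_n)$ for $m<n\le N$ is forced below $\delta$), build a connected set $W\subseteq Q$ containing $u$ and $v$ as follows: on coordinates $m<n\le N$ take the small connected set $C_n$ joining $u_n$ and $v_n$; on coordinates $n>N$ take all of $X_n$, which is connected. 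Then $W=\prod_{m<n\le N}C_n\times\prod_{n>N}X_n$ is connected (a product of connected sets), contains both $u$ and $v$, and has diameter at most $\sum_{m<n\le N}2^{-n}\diam(C_n)+\sum_{n>N}2^{-n}<\eps/2+\eps/2=\eps$. This exhibits $Q$ as ULC.

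The main obstacle is really a bookkeeping one in the (b) $\Rightarrow$ (a) direction: organizing the choice of $\delta'$ so that a single threshold on the product metric $d(u,v)$ simultaneously controls all the finitely many relevant coordinate distances $d_n(u_n,v_n)$ for $m<n\le N$ (so that each falls below the common $\delta$ obtained from the ULC property of those spaces), and then verifying that the diameter of the product set $W$ — computed with the weighted sum metric — indeed comes out below $\eps$. None of this is deep, but one must be careful that throwing in the whole of $X_n$ for $n>N$ contributes only the tail $\sum_{n>N}2^{-n}$, which is where connectedness of those factors is indispensable: if even one $X_n$ with $n>N$ were disconnected, no connected set could join two points in different components, and the argument (as well as the theorem) would fail — consistent with the (a) $\Rightarrow$ (b) direction. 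For the (a) $\Rightarrow$ (b) direction the only subtlety is making sure the surjection onto $C$ genuinely allows lifting of adjacent sequences, which follows because it is a product of coordinatewise surjections and projections/products of lifting maps are lifting maps (cf. the reasoning in Corollary \ref{lift_ULC} and Lemma \ref{Criterion}).
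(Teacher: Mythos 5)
Your proposal is correct and follows essentially the same route as the paper: for (a) $\Rightarrow$ (b) one reduces to a uniformly continuous map onto $\{0,1\}^\omega$ allowing lifting of adjacent sequences (contradicting Corollary \ref{lift_ULC}), and for (b) $\Rightarrow$ (a) one splits $X$ into a finite ULC head and a connected tail of small diameter, exactly as in the paper's $Z\times W$ decomposition. The only differences are cosmetic: you obtain the positive separation of the clopen pieces from the uniform separation of components (the Claim in Lemma \ref{Criterion}) where the paper invokes straightness via Corollary \ref{clopen-str}, and you handle the middle block of coordinates one factor at a time instead of applying ULC to the finite product directly.
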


\begin{proof} (a) $\to $ (b) As $X$ is ULC, then each $X_n$ is necessarily ULC by Lemma \ref{prod_ULC}.

   Assume that infinitely many spaces $X_{n_k}$ are disconnected. Then there exists a clopen non-trivial partition
$X_{n_k}=A_k\cup B_k$. As $X_{n_k}$ is straight, $d(A_k,B_k)>0$. So for every $k\in \N$ the characteristic function $f_k:X_{n_k}\to
\{0,1\}$ of $A_k$ is u.c., so also $f=\prod_k f_k:X'=\prod_kX_{n_k}\to \{0,1\}^\omega$ is u.c. Obviously this map
allows lifting of adjacent sequences. On the other hand, $X'$ is a direct summand of the ULC space $X$, so $X'$ is ULC again by Lemma
\ref{prod_ULC}. This implies that $\{0,1\}^\omega$ is ULC by Corollary \ref{lift_ULC} (as an image of the ULC space $X'$), a
contradiction. Hence only finitely many $X_n$ can be disconnected.

(b) $\to $ (a) We have to show that $X$ is ULC. For every positive $\eps$ there exists $n_0$ such that all $X_n$ with $n\geq n_0$ are
connected and for $Z=\prod_{k=1}^{n_0}X_k$, $W=\prod_{k>n_0} X_k$, the factorization $X=Z\times W$ and for the projections  $p:X \to Z$
and $q:X \to W$ one has $\diam (\{z\}\times W) \leq \eps/2$ for each $z\in Z$. We have seen already that $Z$ is ULC. Hence there exists
$\delta>0$, such that for $z,z'\in Z$ with $d_Z(z,z')<\delta$ there exists a connected set $C$ in $Z$ containing both points and having
diameter $\leq \eps/2$. Let $x=(z,w), x'=(z',w')\in X=Y\times W$. If $d_X(x,x')<\delta$, then also $d_Z(z,z')<\delta$, so there exists a
connected set $C$ in $Z$ as above. Then $C'=C\times W$ is a connected set of $X$ containing both points $x,x'$ and having diameter $\leq \eps$.
\end{proof}

%
%
%
\if
    Hence for all $n$'s outside a finite set of integers, we find connected sets $C_n$ in $\prod_{k=1}^{n_0}X_k$ containing $a_n, b_n$ with $C_n\leq
\eps/2$. Then $B_n=C_n\times \prod_{k>n_0} X_k$ is a connected set
containing $x_n$ and $y_n$ such that $\diam (B_n) \leq \eps$. This
proves that $d^*(x_n,y_n)\to 0$ in $X$. \fi
%

\begin{remark}\label{infpr-ULC-rem}
Note that under the assumption of (b) $X$ is locally connected. Hence
   uniform local connectedness is equivalent to straightness for $X$.  
   
   There exists a compact ULC space $X$ (say $X=[0,1]\cup [2,3]$), such that $X^\omega$ is straight
  (actually, compact), but not ULC. 
Hence we deduce that straightness alone of $X$, provided all spaces $X_n$ are 
ULC, is not sufficient to imply $X$ is ULC in the above theorem. 
\end{remark}

The following corollary describes the metric spaces having their countably infinite power ULC.

\begin{corollary}\label{auxil-con} Let $X$ be a metric space. Then TFAE
\begin{itemize}
\item  $X^\omega$ is ULC;
\item   $X$ is connected and ULC.
\end{itemize}
\end{corollary}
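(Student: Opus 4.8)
The plan is to obtain this as an immediate corollary of Theorem~\ref{infpr-ULC}. I would apply that theorem to the constant family $X_n = X$, $n\in\omega$, for which $\prod_n X_n = X^\omega$. The theorem then says that $X^\omega$ is ULC if and only if each $X_n$ is ULC and all but finitely many of the $X_n$ are connected.

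The only remaining point is to read off what these two conditions mean for a \emph{constant} family. The condition ``each $X_n$ is ULC'' is literally ``$X$ is ULC''. For the other condition I would argue by cases: if $X$ is connected, then every $X_n = X$ is connected, so ``all but finitely many $X_n$ are connected'' holds trivially; if $X$ is disconnected, then \emph{every} $X_n$ is disconnected, so cofinitely many of them are certainly not connected and the condition fails. Hence, for the constant family, this condition is equivalent to ``$X$ is connected''. Combining the two, $X^\omega$ is ULC if and only if $X$ is ULC and connected, which is exactly the asserted equivalence.

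There is essentially no obstacle here; all the content sits in Theorem~\ref{infpr-ULC}. If a more self-contained argument were wanted, the forward direction also follows directly: $X$ is a uniform retract of $X^\omega$ via the first projection, so $X$ is ULC by Lemma~\ref{prod_ULC}, and if $X$ were disconnected one would build, exactly as in the proof of Theorem~\ref{infpr-ULC}, a uniformly continuous surjection $X^\omega \to \{0,1\}^\omega$ allowing lifting of adjacent sequences, forcing the non-ULC space $\{0,1\}^\omega$ to be ULC, a contradiction; the reverse direction is precisely the (b)$\to$(a) step of that theorem. In any case, the cleanest write-up is simply to invoke Theorem~\ref{infpr-ULC}.
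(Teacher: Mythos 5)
Your proposal is correct and matches the paper, which records this statement as an immediate consequence of Theorem~\ref{infpr-ULC} applied to the constant family $X_n = X$, exactly as you do; your observation that for a constant family ``all but finitely many $X_n$ connected'' reduces to ``$X$ connected'' is the only point needed and is handled properly.
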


For a connected and locally connected space $X$ the power $X^\omega$ is also locally connected and connected. So the straightness of $X^\omega$ from Corollary~\ref{auxil-con}
   is then equivalent to ULC.

If $X^\omega$ is straight then $X$ need not be ULC even if $X$ has no isolated points (take the Cantor set).

\begin{theorem}\label{ALC/WULC_Inf-prod} \NB Let $X_1, \ldots, X_n, \ldots $ be metric spaces. Then $X = \prod_{i=1}^\infty X_i$ is ALC (resp., WULC) if and only if 
one of the following conditions holds:
\begin{itemize}
\item[(a)] all spaces $X_i$ are compact;
\item[(b)] all spaces $X_i$ are ULC and all but finitely many of them are connected;
\item[(c)] one of the spaces is ALC (resp., WULC) and all other spaces are both compact, ULC and all but finitely many of them are connected. 
\end{itemize}
\end{theorem}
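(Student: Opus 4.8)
The plan is to follow the same pattern used in the finite case (Theorem~\ref{ALC/WULC_prod}), but handling the ``tail'' of the infinite product separately, since for infinite products connectedness of cofinitely many factors becomes relevant (as it already did in Theorem~\ref{infpr-ULC}). First I would dispose of the necessity direction. Assuming $X=\prod_i X_i$ is ALC (resp.\ WULC), each $X_i$ is ALC (resp.\ WULC) because $X_i$ is a uniform retract of $X$ and these properties pass to uniform retracts (arguing as in Corollary~\ref{u.retract} via Lemma~\ref{lemma_imag}/the projections of Example~\ref{Example1}, together with the easy permanence of ALC and WULC under such images). Then I distinguish cases on whether some $X_{i_0}$ fails to be compact. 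If some $X_{i_0}$ is non-compact, apply Theorem~\ref{Thm_loc_connN} to the splitting $X = X_{i_0}\times \prod_{i\ne i_0}X_i$: since $\prod_{i\ne i_0}X_i$ is not compact would force $X_{i_0}$ to be ULC --- so in any case we get that either $X_{i_0}$ is ULC or $\prod_{i\ne i_0}X_i$ is compact. Iterating/combining this with Tychonoff (a countable product is compact iff each factor is) yields: either all factors are compact (case (a)), or there is exactly one ``bad'' factor that is merely ALC/WULC while all others are compact and ULC (case (c)), or every factor is ULC (case (b)). In the last two cases the requirement that all but finitely many $X_i$ be connected is exactly Theorem~\ref{infpr-ULC}(a)$\Leftrightarrow$(b) applied to the relevant subproduct: indeed an infinite subproduct of compact ULC spaces sits inside $X$ as a direct summand (uniform retract), and if infinitely many of them were disconnected one copies the argument of Lemma~\ref{Criterion}/Theorem~\ref{infpr-ULC} to produce a u.c.\ map onto $\N^\N$ or onto $\{0,1\}^\omega$ allowing lifting of adjacent sequences, contradicting that $X$ (hence that summand) is ALC.

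For the sufficiency direction, since both ``compact'' and ``ULC'' imply WULC and hence ALC, and since ULC is productive for finite products (Lemma~\ref{prod_ULC}) and in the infinite-with-cofinitely-many-connected case (Theorem~\ref{infpr-ULC}), cases (a) and (b) reduce to already-known facts; only case (c) needs work. So assume $X_1$ is ALC (resp.\ WULC) and, for $i\ge 2$, each $X_i$ is compact and ULC with all but finitely many connected. Group the factors as $X = X_1\times Y$ where $Y=\prod_{i\ge 2}X_i$. By Theorem~\ref{infpr-ULC} applied to $Y$ together with the observation that a product of compact spaces is compact (Tychonoff), $Y$ is a \emph{compact ULC} space. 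Thus it suffices to prove the following ``plug-in'' lemma, which is the exact infinite analogue of the case (c) step in Theorem~\ref{ALC/WULC_prod}: if $Y$ is compact and ULC and $X_1$ is ALC (resp.\ WULC), then $X_1\times Y$ is ALC (resp.\ WULC). But this is literally the sufficiency content of Theorem~\ref{ALC/WULC_prod}(c) with ``$\prod_{i=2}^n X_i$'' replaced by the compact ULC space $Y$; its proof goes through verbatim. Namely, given discrete adjacent sequences $(x_n,y_n),(x'_n,y'_n)$ in $X_1\times Y$, compactness of $Y$ lets us pass to a subsequence with $y_n\to y$ and $y'_n\to y$; discreteness then forces $(x_n),(x'_n)$ to be discrete adjacent in $X_1$; use the ALC (resp.\ WULC) property of $X_1$ to get sets $I_n$ with $\operatorname{diam}I_n\to 0$ quasi-connecting $x_n,x'_n$ (resp.\ connected containing them), use ULC of $Y$ with $y_n,y'_n\to y$ to get connected $C_n\ni y_n,y'_n$ with $\operatorname{diam}C_n\to 0$, and take $J_n = I_n\times C_n$; these witness ALC (resp.\ WULC) of $X_1\times Y$.

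The main obstacle --- and the only genuinely new point over the finite case --- is the interplay between the ALC/WULC hypothesis and the ``cofinitely many connected'' condition on the tail: one must verify both that infinitely many disconnected factors among the compact ULC ones is incompatible with $X$ being ALC (the necessity of the connectedness clause) and that, conversely, once the tail is compact ULC with cofinitely many connected components it really assembles into a \emph{single} compact ULC space to which the finite-case plug-in lemma applies. Both are handled by the two cited tools --- Tychonoff's theorem for compactness of the tail, and Theorem~\ref{infpr-ULC} for the ``compact ULC iff cofinitely connected'' equivalence --- so no essentially new machinery is needed; the rest is a careful bookkeeping of which factor (if any) is the non-compact/non-ULC one, exactly mirroring the proof of Theorem~\ref{ALC/WULC_prod}.
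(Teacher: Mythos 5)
Your sufficiency argument and the overall case analysis for necessity follow the paper's route (split off one factor, use Theorem~\ref{Thm_loc_connN} resp.\ Theorem~\ref{ALC/WULC_prod} for the binary decomposition, Tychonoff plus Theorem~\ref{infpr-ULC} to see that the tail $Y=\prod_{i\ne j}X_i$ is a single compact ULC space, then the finite case (c)). However, there is a genuine gap in your justification of the ``all but finitely many connected'' clause in the necessity direction. You argue that if infinitely many of the (compact, ULC) factors were disconnected one could produce, as in Lemma~\ref{Criterion}, a u.c.\ map onto $\N^\N$ or onto $\{0,1\}^\omega$ allowing lifting of adjacent sequences, ``contradicting that $X$ (hence that summand) is ALC''. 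This does not work: a disconnected ULC factor only surjects uniformly onto a \emph{finite} discrete space (a compact ULC space even has finitely many components), so the map onto $\N^\N$ of Lemma~\ref{Criterion} is not available; and the map onto $\{0,1\}^\omega$ yields no contradiction, because $\{0,1\}^\omega$ is compact and hence ALC, WULC and straight --- Lemma~\ref{lemma_imag} and Corollary~\ref{lift_ULC} transfer straightness and ULC along such maps, not a failure of ALC. Indeed the paper's Remark~\ref{infpr-ULC-rem} exhibits $X=[0,1]\cup[2,3]$, all of whose powers are compact (hence ALC and WULC) although every factor is a disconnected compact ULC space: so ``infinitely many disconnected factors'' by itself is perfectly compatible with $X$ being ALC (that situation is absorbed by case (a)).

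The correct way to get the connectedness clause --- and what the paper's appeal to Theorem~\ref{ALC/WULC_prod} together with Theorem~\ref{infpr-ULC} amounts to --- is to note that the clause is only needed when not all factors are compact; pick a non-compact factor $X_j$ and apply Theorem~\ref{Thm_loc_connN} to $X=Y_j\times X_j$ to conclude that the infinite subproduct $Y_j=\prod_{i\ne j}X_i$ is itself ULC (not merely a product of ULC spaces), and only then invoke the necessity direction of Theorem~\ref{infpr-ULC} for $Y_j$, which gives that all but finitely many of its factors are connected (its proof does use the map onto $\{0,1\}^\omega$, but the contradiction there is with ULC of the Cantor cube via Corollary~\ref{lift_ULC}, not with ALC). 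With this repair --- deriving ULC of the relevant subproduct from the non-compactness of the complementary factor before quoting Theorem~\ref{infpr-ULC} --- your proof coincides with the paper's.
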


\begin{proof} The necessity follows from Theorem \ref{ALC/WULC_prod} and Theorem \ref{infpr-ULC}.

For the sufficiency consider three cases. In case (a) $X$ is compact, so WULC (and ALC). If (b) holds true, then $X$ is ULC by  Theorem \ref{infpr-ULC}.  
Finally, if (c) holds true, then cases (a) and (b) apply along with Theorem \ref{ALC/WULC_prod}. 
\end{proof}

Since UC spaces are both straight and complete, they are ALC by Theorem \ref{coro_comp_try}. This is why one is tempted to connect
the above theorem to the following old result of Atsuji \cite{A}:  a product $X = \prod_{i=1}^\infty X_i$ of metric spaces
is $UC$ if and only if one of the following conditions holds:
\begin{itemize}
\item[(i)]  each $X_n$  is compact or
 \item[(ii)] all but finitely many $X_n$ are one-point spaces and either all are uniformly isolated or all are finite except for one which 
is a $UC$-space. 
 \end{itemize}

Of course, the  sufficiency is obvious. For the necessity it suffices to note that every non-compact UC space $X$
has an infinite closed uniformly discrete set  $D$. If a metric space $Y$ has a non-isolated point $y$, then the product $X\times Y$ contains
a closed subset, namely $D\times \{y\}$, that is uniformly discrete and contained in the subspace $(X\times Y)'$ of non-isolated points of $X\times Y$.
Consequently, $(X\times Y)'$ is not compact and hence $X\times Y$ is not UC. Hence the product $X\times Y$ can be a UC space only when $Y$ is uniformly discrete. 
Moreover, if $X$ is not discrete this occurs precisely when $Y$ is finite. 
Since an infinite product can be uniformly discrete precisely when all but finitely many of the spaces are singletons and the remaining (finitely many) spaces are 
uniformly discrete, this shows the  necessilty of (ii). 


\subsection{Straightness of infinite products}

 We next show that one direction of Theorem A (the necessary condition) remains valid also in the case of countable products: 

\begin{proposition}\label{NecessInf} Let $\{X_i:i\in I\}$ be a countable family of metric spaces. If the product $X=\prod_{i\in I} X_i$ is straight then all spaces $X_i$
are straight  and one of the following three cases occurs
   \begin{itemize}
    \item[(a)] all $X_i$ are ULC and all but finitely many spaces are connected (i.e., $X$ is ULC);
    \item[(b)] all $X_i$ are precompact;
    \item[(c)] all but one of the spaces are both ULC and precompact, and all but finitely many spaces are connected.
\end{itemize}
\end{proposition}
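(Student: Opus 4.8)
The plan is to reduce everything to the finite-product characterization (Theorem~A) together with the infinite-product results already established in \S\ref{product1}, in particular Theorem~\ref{infpr-ULC} and Lemma~\ref{Criterion}. First I would observe that straightness of $X=\prod_{i\in I}X_i$ forces straightness of each $X_i$: the projection $\pi_i\colon X\to X_i$ is a surjective u.c.\ map allowing lifting of adjacent sequences (Example~\ref{Example1}), so Lemma~\ref{lemma_imag} applies. More generally, for any partition $I=J\sqcup K$ the factorization $X=(\prod_{j\in J}X_j)\times(\prod_{k\in K}X_k)$ exhibits each partial product as a factor, hence as a straight space by the same argument via Corollary~\ref{Coro1}.

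Next I would run the ULC/precompact dichotomy. Suppose first that some $X_{i_0}$ is \emph{not} precompact. Writing $X=X_{i_0}\times\prod_{i\ne i_0}X_i$ and applying Corollary~\ref{Thm_loc_conn} with the non-precompact factor in the ``$Y$'' slot, we get that $\prod_{i\ne i_0}X_i$ is ULC; by Lemma~\ref{prod_ULC} (applied to any finite sub-factorization) and Theorem~\ref{infpr-ULC} this means every $X_i$ with $i\ne i_0$ is ULC and all but finitely many of them are connected. It remains to decide the status of $X_{i_0}$. If \emph{every} $X_i$ is ULC, then since additionally all but finitely many $X_i$ (for $i\ne i_0$, and trivially we may absorb $i_0$ into the finite exceptional set) are connected, Theorem~\ref{infpr-ULC} gives that $X$ itself is ULC --- this is case~(a). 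Otherwise $X_{i_0}$ is the unique non-ULC space; it is straight and, since we assumed it non-precompact, it is also not precompact --- but then it is straight non-precompact, and we must still have $X$ straight. Here the subtle point is that $X_{i_0}$ non-ULC and non-precompact is exactly the obstruction: pick any $j\ne i_0$ with $X_j$ non-precompact if one exists, or else all $X_i$ with $i\ne i_0$ are precompact ULC, and then $X_{i_0}\times(\text{something non-precompact})$ being straight forces $X_{i_0}$ ULC by Corollary~\ref{Thm_loc_conn}, a contradiction; so in fact if $X_{i_0}$ is non-precompact then it is ULC, we are back in case~(a), and case~(c) arises precisely when the unique non-ULC space \emph{is} precompact. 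Assembling: if some $X_i$ is non-ULC it must be precompact, and then grouping it against the ULC remainder (which is ULC as an infinite product by Theorem~\ref{infpr-ULC}) and using Corollary~\ref{Thm_loc_conn} once more shows all \emph{other} spaces are precompact too --- either all are precompact (case~(b)), or exactly one fails to be ULC while being precompact and the rest are precompact ULC with all but finitely many connected (case~(c)). Finally, if \emph{all} $X_i$ are precompact we are in case~(b) outright.

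The remaining ingredient, and the place where Lemma~\ref{Criterion} is essential, is the ``all but finitely many connected'' clause in cases~(a) and~(c): in case~(a) it is already built into Theorem~\ref{infpr-ULC}, and in case~(c) it is exactly the content of Lemma~\ref{Criterion} applied to the ULC factors, since straightness of $X$ passes to straightness of the sub-product of all the ULC $X_i$'s (again by the factorization-plus-Corollary~\ref{Coro1} observation), and Lemma~\ref{Criterion} then forces all but finitely many of those to have finitely many connected components --- and a ULC space with finitely many connected components, being straight, has its components uniformly separated, so one may argue component-wise; actually for the statement it suffices that all but finitely many are connected, which is what the Lemma delivers once we note a ULC space which is a finite disjoint union of clopen pieces can be connected only if it has a single component.

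\textbf{Main obstacle.} The step I expect to require the most care is the case analysis distinguishing (a) from (c): showing that if there is a non-ULC factor then (i) it must be precompact and (ii) it must be the \emph{only} non-ULC factor and moreover \emph{all} other factors are precompact. Each of these uses Corollary~\ref{Thm_loc_conn} with a carefully chosen grouping of the factors into two blocks, one of which is arranged to be non-precompact so as to force ULC-ness of the complementary block; the bookkeeping of which factor goes into which block, and handling the degenerate sub-case where there is no ``spare'' non-precompact factor to exploit, is the delicate part. Everything else is a routine transfer of the finite-product statements through the factorizations $X=(\prod_{j\in J}X_j)\times(\prod_{k\notin J}X_k)$.
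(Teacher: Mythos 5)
Your overall strategy (factor $X$ as $X_j\times Y_j$ with $Y_j=\prod_{i\neq j}X_i$ and play Corollary \ref{Thm_loc_conn} against Theorem \ref{infpr-ULC}) is the right one, and your first branch is fine up to the point where the non-precompact factor $X_{i_0}$ is assumed non-ULC. There the argument breaks: in the sub-case where every $X_i$ with $i\neq i_0$ is precompact, there is no non-precompact block left to pair $X_{i_0}$ with, since $\prod_{i\neq i_0}X_i$ is then precompact; Corollary \ref{Thm_loc_conn} is satisfied by its precompact alternative and gives no information about $X_{i_0}$, so the asserted contradiction (``$X_{i_0}\times(\text{something non-precompact})$ straight forces $X_{i_0}$ ULC'') does not exist. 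Worse, the configuration you try to rule out genuinely occurs, so no correct argument can eliminate it: take $X_{i_0}=\{0\}\cup\{1/n:n\in\N\}\cup\N\subset\R$ (a non-compact UC space, hence straight; not precompact; not ULC, being totally disconnected but not uniformly discrete) and $X_i=[0,1]$ for all $i\neq i_0$. Here $\prod_{i\neq i_0}X_i$ is precompact ULC by Theorem \ref{infpr-ULC}, so $X$ is straight by Theorem \ref{pre-ULC} (cf.\ Remark \ref{NewRemark}), and it is exactly case (c) of the proposition that holds, with a non-precompact, non-ULC exceptional factor. Consequently your intermediate claims ``if $X_{i_0}$ is non-precompact then it is ULC'' and ``if some $X_i$ is non-ULC it must be precompact'' are false, and your assembled trichotomy, in which the exceptional space of case (c) is required to be precompact, is strictly stronger than the statement and fails for this example.

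The repair is the bookkeeping the paper uses, which never constrains the exceptional factor beyond straightness: if some $X_j$ is not ULC, Corollary \ref{Thm_loc_conn} applied to $X_j\times Y_j$ gives $Y_j$ precompact, hence each $X_i$ ($i\neq j$) precompact; if $X_j$ is also precompact you are in (b), while if $X_j$ is not precompact the corollary applied with the roles exchanged gives $Y_j$ ULC, whence each $X_i$ ($i\neq j$) is ULC and, by Theorem \ref{infpr-ULC} applied to $Y_j$, all but finitely many are connected --- this is (c). If every $X_i$ is ULC and some $X_j$ is not precompact (i.e.\ (b) fails), the same exchange gives $Y_j$ ULC, and Theorem \ref{infpr-ULC} yields cofinite connectedness and then ULC-ness of $X$, i.e.\ (a). Note also a secondary gap: Lemma \ref{Criterion} only yields that all but finitely many ULC factors have finitely many connected components, which is weaker than the connectedness required in (a) and (c); the connectedness clause should come from Theorem \ref{infpr-ULC} applied to the ULC product $Y_j$, as above, not from Lemma \ref{Criterion}.
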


\begin{proof} For every $j\in I$ let $Y_j=\prod\{X_i: i\in I\setminus \{j\}\}$.  Then one can write $X=X_j\times Y_j$ (actually, these spaces are uniformly homeomorphic).  
Assume that $X_j$ is not ULC for some $j\in I$. Then the straightness of $X$ yields $Y_j$ is precompact,
by Corollary \ref{Thm_loc_conn}. So all spaces $X_i$, $i\in I\setminus  \{j\}$, are precompact. If $X_j$ is precompact too, then
we get (b). If $X_j$ is not precompact, then $Y_j$ is ULC by Corollary \ref{Thm_loc_conn}. Hence 
all spaces $X_i$ ($i\in I\setminus  \{j\}$) are ULC and all but finitely of them are connected, by Theorem \ref{infpr-ULC}.  
Therefore, (c) holds true.

Now assume that both (b) and (c) fail. Then all spaces $X_i$ are ULC by the above argument. Moreover, if $X_j$ ($j\in I$) is a space that fails to be precompact, then
 $Y_j=\prod\{X_i: i\in I\setminus \{j\}\}$ is ULC by Corollary \ref{Thm_loc_conn}. Hence again by Theorem \ref{infpr-ULC} all but finitely many spaces are connected. 
\end{proof}

\begin{remark} \label{NewRemark} It was already proved in Theorem \ref{infpr-ULC} that item (a) is equivalent to the ULC
property of the infinite product. 
Let us see that (c) is also a sufficient condition for straightness. Indeed, if for some $j\in I$ all spaces $X_i$, $i\in I\setminus \{j\}$,
are both ULC and precompact and all  but finitely many spaces are connected, then the space $Y_j=\prod\{X_i: i\in I\setminus \{j\}\}$
is precompact and ULC by Theorem \ref{infpr-ULC}. Now Theorem \ref{pre-ULC} implies that   $X=X_j\times Y_j$ is straight. 

 What remains open is establishing sufficiency of (b) (see Question \ref{Ques-Inf}). 
\end{remark}

For powers we have the following:

\begin{corollary} If a power $X^\omega$ is straight, then either $X$ is precompact or $X^\omega$ is ULC.
\end{corollary}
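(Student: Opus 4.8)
The statement to prove is: if a power $X^\omega$ is straight, then either $X$ is precompact or $X^\omega$ is ULC. The plan is to derive this as an immediate consequence of Proposition~\ref{NecessInf} applied to the constant family $X_i = X$ for all $i \in I = \N$. That proposition tells us that straightness of $X = \prod_i X_i$ forces exactly one of three cases; I would simply trace through what each of the three cases says when all the factors coincide, and observe that in each case the desired dichotomy follows.

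First I would invoke Proposition~\ref{NecessInf} with $X_i = X$ for every $i$: the straightness of $X^\omega$ implies that one of (a), (b), (c) of that proposition holds. In case (b), all $X_i$ are precompact, which for a constant family means $X$ itself is precompact, and we are done. In case (a), all $X_i$ are ULC and all but finitely many are connected; since the family is constant, ``all but finitely many $X_i$ are connected'' just means $X$ is connected, and then $X^\omega$ is ULC by Theorem~\ref{infpr-ULC} (this is also exactly the content of Corollary~\ref{auxil-con}). So again the conclusion holds. The only remaining possibility is case (c): all but one of the spaces are both ULC and precompact. But the family is constant, so if even one factor is precompact then \emph{every} factor is precompact, i.e.\ $X$ is precompact, and we land in the first alternative of the conclusion once more.

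Thus in every case either $X$ is precompact or $X^\omega$ is ULC, which is what we wanted. The argument is essentially a case-by-case collapse of Proposition~\ref{NecessInf} under the constancy of the family, together with Theorem~\ref{infpr-ULC} to identify ``all factors ULC and cofinitely many connected'' with ``$X^\omega$ is ULC.'' There is no real obstacle here: the only mildly delicate point is noticing that in case (c) the ``all but one'' quantifier is vacuous for a constant family with infinitely many factors (the one exceptional factor is a copy of $X$, but so are all the others, which are required to be precompact), so precompactness of $X$ is forced. Everything else is a direct quotation of the already-established results.
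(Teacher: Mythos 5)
Your argument is correct, but it is not the route the paper takes. You specialize Proposition~\ref{NecessInf} to the constant family $X_i=X$ and check that each of its three cases collapses into the desired dichotomy: case (b) gives precompactness of $X$ outright; case (c) forces $X$ to be precompact because infinitely many of the (identical) factors must be precompact and ULC; and case (a) gives that $X$ is ULC and (since cofinitely many identical factors are connected) connected, whence $X^\omega$ is ULC by Theorem~\ref{infpr-ULC} (equivalently Corollary~\ref{auxil-con}). All of these steps are sound, and there is no circularity since Proposition~\ref{NecessInf} and Theorem~\ref{infpr-ULC} precede the corollary. The paper instead uses a self-similarity trick: assuming $X$ is not precompact, $X^\omega$ is not precompact either, and since $X^\omega$ is uniformly homeomorphic to $X^\omega\times X^\omega$, Corollary~\ref{powers} (the square version of the ULC/precompact dichotomy, itself a consequence of Corollary~\ref{Thm_loc_conn}) applied to $Y=X^\omega$ forces $X^\omega$ to be ULC. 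The paper's proof is shorter and bypasses the infinite-product machinery entirely, needing only the binary dichotomy; your proof is heavier but, as a by-product, extracts slightly more information, namely that in the non-precompact alternative $X$ itself must be connected and ULC, which is exactly the sharper statement recorded in Corollary~\ref{auxil-con} and in Corollary~2 of the introduction.
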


\begin{proof} Assume $X$ is not precompact. Then also $X^\omega$ is not precompact.   Since $X^\omega$ is uniformly homeomorphic
to $X^\omega\times X^\omega$, we conclude with Corollary \ref{powers} that $X^\omega$ is ULC.
\end{proof}

The following results were found among the hand written notes of our late co-author Jan Pelant after his death:

\begin{theorem}\label{PM1} Let $X_n $ be ULC and precompact for each $n$, then $\Pi_n X_n$ is straight.
\end{theorem}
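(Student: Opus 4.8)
The plan is to reduce the statement to a tightness property of the completion and to prove that property by a ``one coordinate at a time'' version of the argument in Lemma~\ref{claim2}. Write $\widetilde{X_n}$ for the completion of $X_n$; it is compact by precompactness and ULC by Lemma~\ref{completionULC}, so $\widetilde X:=\prod_n\widetilde{X_n}$ (with the metric $\sum_n 2^{-n}d_n$) is the completion of $X=\prod_n X_n$, and as a countable product of compact metric spaces it is itself compact, hence straight. By Corollary~\ref{NEWcorol} it therefore suffices to show that $\widetilde X$ is a tight extension of $X$. Since, for a fixed point, belonging to $\overline{F^+}\cap\overline{F^-}$ and to $\overline{F^+\cap F^-}$ depends only on the one-point subextension it generates (the reduction noted after Theorem~\ref{str-dense}), it is enough to fix a closed binary cover $X=F^+\cup F^-$ and a point $y=(y_n)\in\widetilde X$ with $y\in\overline{F^+}^{\widetilde X}\cap\overline{F^-}^{\widetilde X}$, assume for contradiction that $y\notin\overline{F^+\cap F^-}^{\widetilde X}$, and fix $\varepsilon_0>0$ with $B^{\widetilde X}_{\varepsilon_0}(y)\cap(F^+\cap F^-)=\emptyset$.

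The core is the following correction step, an infinite-product analogue of the mechanism in the proof of Lemma~\ref{claim2}: for a point $z\in\widetilde X$ lying in $\overline{F^+}\cap\overline{F^-}$ with $B_r(z)\cap(F^+\cap F^-)=\emptyset$, and for any $m\in\N$ and $\eta\in(0,r)$, there is $z'\in\widetilde X$ with $z'_n\in X_n$ for $n\le m$, $z'_n=z_n$ for $n>m$, $d(z,z')<\eta$, $z'\in\overline{F^+}\cap\overline{F^-}$, and $B_{r-d(z,z')}(z')$ still missing $F^+\cap F^-$; moreover if $z_n\in X_n$ already one keeps $z'_n=z_n$. To see this, for each $n\le m$ use Lemma~\ref{unif.l.c.} to obtain $\delta_n>0$ so that each point of $X_n$ has an open connected neighbourhood between its $\delta_n$-ball and a ball of radius small enough that the slices built below have diameter $<r/2$; choose $\zeta_n\in X_n$ close enough to $z_n$ that $d(z,z')<\eta$ will hold (take $\zeta_n=z_n$ if $z_n\in X_n$). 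Pick $a^j\in F^+$, $b^j\in F^-$ with $a^j,b^j\to z$. For $j$ large $\zeta_n$ lies in the chosen connected neighbourhood $W^{(n)}_{a^j_n}$ of $a^j_n$, so $P^j:=\prod_{n\le m}W^{(n)}_{a^j_n}\times\{(a^j_n)_{n>m}\}$ is a connected subset of $X$ of small diameter containing $a^j$; for $j$ large $P^j\subseteq B_r(z)$, hence $P^j$ misses $F^+\cap F^-$ and, being connected and meeting $F^+$ at $a^j$, lies in $F^+$. Then $\alpha^j:=((\zeta_n)_{n\le m},(a^j_n)_{n>m})\in F^+$ and $\alpha^j\to z'$ (coordinatewise convergence, and dominated convergence on the tail), so $z'\in\overline{F^+}$; the same with $b^j$ gives $z'\in\overline{F^-}$.

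Then I would iterate: set $y^{(0)}=y$, and having $y^{(i)}$ with $y^{(i)}_n\in X_n$ for $n\le i$, $y^{(i)}\in\overline{F^+}\cap\overline{F^-}$ and $d(y,y^{(i)})<\varepsilon_0/4$, apply the correction step to $y^{(i)}$ (with $r=\varepsilon_0-d(y,y^{(i)})>3\varepsilon_0/4$, $m=i+1$, $\eta=\varepsilon_0 2^{-i-3}$) to get $y^{(i+1)}$; by construction coordinates $1,\dots,i$ are untouched and only coordinate $i+1$ changes. Hence $(y^{(i)})_i$ is Cauchy and converges to a point $y^{(\infty)}$ whose $n$-th coordinate equals $y^{(n)}_n\in X_n$ for every $n$, so $y^{(\infty)}\in X$. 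Since the $F^{\pm}$ are closed in $\widetilde X$ we get $y^{(\infty)}\in\overline{F^+}\cap\overline{F^-}$, while $d(y,y^{(\infty)})\le\varepsilon_0/4<\varepsilon_0$ gives $y^{(\infty)}\notin F^+\cap F^-$; but $y^{(\infty)}\in X=F^+\cup F^-$ with $F^{\pm}$ closed in $X$, so $y^{(\infty)}\in F^+\cap F^-$ --- a contradiction. Thus $y\in\overline{F^+\cap F^-}^{\widetilde X}$, $\widetilde X$ is a tight extension of $X$, and $X$ is straight.

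I expect the main obstacle to be the bookkeeping inside the correction step: calibrating the ULC radii in the factors $X_1,\dots,X_m$ (together with their ``outer'' radii) so that the connected slices $P^j$ fall inside $B_r(z)$ uniformly for large $j$ while simultaneously keeping $d(z,z')<\eta$. Once that local step is in place, the diagonalization over coordinates is routine; precompactness enters only to make $\widetilde X$ compact (hence straight), and ULC only to run Lemma~\ref{unif.l.c.} in each factor.
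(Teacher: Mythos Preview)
Your proof is correct and follows the same high-level reduction as the paper: pass to the completion $\widetilde X=\prod_n\widetilde{X_n}$, which is compact (hence straight), and show via Corollary~\ref{NEWcorol}/Theorem~\ref{str-dense} that $\widetilde X$ is a tight extension of $X$. Where you diverge is in the proof of tightness. The paper proves the standalone Theorem~\ref{pulc}: it builds, once and for all, an open connected set $C_i\subseteq X_i\cap V_i$ in each factor containing the tails of the projected approximating sequences, sets $C=\prod_i C_i$, picks any $s\in C$, finds a basic neighbourhood of $s$ lying entirely in one piece of the cover, and then uses connectedness of a \emph{finite} product $C_0\times\dots\times C_m\times\{\text{tail}\}$ to reach a contradiction. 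Your argument instead runs an iterative ``correction'' scheme: at stage $i$ you replace the $(i{+}1)$-st coordinate of the current point by a nearby element of $X_{i+1}$, using a finite connected slice (exactly the mechanism of Lemma~\ref{claim2}) to keep the new point in $\overline{F^+}\cap\overline{F^-}$; a Cauchy limit produces a point of $X$ in $\overline{F^+}\cap\overline{F^-}$ near $y$, which forces the contradiction $y^{(\infty)}\in F^+\cap F^-$.

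Both arguments use only ULC of each $X_n$ (precompactness enters, as you note, solely to make $\widetilde X$ compact); so you have, in effect, reproved Theorem~\ref{pulc} by a different route. The paper's single-shot construction is cleaner and isolates the general tightness statement; your diagonalisation is more hands-on and closer to Lemma~\ref{claim2}, at the cost of the $\varepsilon$--$\delta$ bookkeeping you flagged. One small slip: you write ``the $F^\pm$ are closed in $\widetilde X$'' when you mean their closures $\overline{F^\pm}$ are closed in $\widetilde X$; the subsequent step (from $y^{(\infty)}\in X\cap\overline{F^\pm}^{\widetilde X}$ to $y^{(\infty)}\in F^\pm$ using that $F^\pm$ is closed in $X$) is correct.
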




To prove the theorem we need the following theorem of independent interest. 

\begin{theorem}\label{pulc} Let $X_i$ be a dense ULC subset of $Y_i$ for each $i\in \N$. Then $\Pi_i Y_i$ is a tight extension of $\Pi_i X_i$.
\end{theorem}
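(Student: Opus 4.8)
The plan is to reduce the infinite case to the finite case (Theorem~\ref{NewTh}) by a truncation argument, using that the tail factors $\prod_{i>n} X_i$ and $\prod_{i>n} Y_i$ are ULC by Theorem~\ref{infpr-ULC} applied to the remark that a dense subset of a ULC space is ULC (Lemma~\ref{ulc-dense}), so the tail extension is automatically tight by Lemma~\ref{claim2}, and moreover has small diameter. Write $X = \prod_i X_i \subseteq \prod_i Y_i = Y$, take a closed binary cover $X = F^+ \cup F^-$, and suppose for contradiction that there is $\bar y \in \overline{F^+}^Y \cap \overline{F^-}^Y$ with a basic open neighbourhood $U \times V$ (where $U$ is open in a finite-coordinate product $\prod_{i\le n} Y_i$ and $V$ is the remaining tail) such that $(U \times V) \cap (F^+ \cap F^-) = \emptyset$. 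Pick sequences $(x^+_k) \in F^+$ and $(x^-_k) \in F^-$ converging to $\bar y$, eventually inside $U \times V$.

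\textbf{Key steps.} First I would choose $n$ large enough that the tail $W := \prod_{i>n} X_i$, with its completion-or-superspace $W' := \prod_{i>n} Y_i$, satisfies $\diam(\{w\}\times \text{anything in }W') < \eps/2$ for a suitable $\eps$ coming from the ULC modulus in the first $n$ coordinates; both $W$ and $W'$ are ULC (hence $W' \supseteq W$ tight) by Theorem~\ref{infpr-ULC} and Lemma~\ref{ulc-dense}. Second, for each $k$ I would use uniform local connectedness of $W$ to produce a small connected set $C_k \subseteq W$ containing the $W$-parts of $x^+_k$ and of $x^-_k$ — this is possible because after passing to a subsequence the $W$-parts of $x^+_k$ and $x^-_k$ are within $\delta$ of each other (they both converge to the $W$-part of $\bar y$). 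The product of such a $C_k$ with a fixed point in the first $n$ coordinates is a connected set of diameter $<\eps$, entirely inside $U\times V$, hence disjoint from $F^+\cap F^-$, hence contained in exactly one of $F^+$, $F^-$. This lets me replace $x^\pm_k$ by points of the form $(p^\pm_k, c_k)$ with a \emph{common} tail coordinate $c_k$, reducing to the situation where the witnessing sequences agree on all coordinates beyond the $n$-th. Third, with the tail coordinate fixed, the sets $\{(p,c_k)\in F^\pm\}$ restricted to the first $n$ factors become a clopen-type configuration in a \emph{finite} product $\prod_{i\le n} Y_i$ of tight extensions $X_i \subseteq Y_i$; here I would invoke Theorem~\ref{NewTh} (finite products of tight extensions are tight) — or rather re-run its Case~1/Case~2/Case~3 dyadic-partition argument verbatim on the finite product — to derive that $\bar x$ (the first-$n$-coordinates part of $\bar y$) lies in the closure of $P'\cap Q'$, contradicting that $U$ is a neighbourhood of $\bar x$ disjoint from it.

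\textbf{Main obstacle.} The delicate point is the interplay between the two reductions: one must fix a single $\eps$ (and matching $\delta$, and matching truncation level $n$) that simultaneously (i) makes the tail $W'$ have diameter $<\eps/2$, (ii) is small enough that $\eps$-balls around the first-$n$-coordinate parts of the $x^\pm_k$ sit inside $U$, and (iii) feeds the ULC modulus of $\prod_{i\le n} X_i$ so that the connected sets $C_k\times\{p\}$ stay inside $U\times V$. Since $n$ depends on $\eps$ through (i) but the ULC modulus of the first-$n$ block depends on $n$, one should first fix $n$ from $\eps$ via the metric $d(x,y)=\sum_i 2^{-i}d_i(x_i,y_i)$, \emph{then} extract the modulus $\delta$ of the finite block, and only then pass to the common-tail subsequence. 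A secondary nuisance is that $C_k$ lives in $W = \prod_{i>n}X_i$, which is ULC but the connected set guaranteed by Lemma~\ref{unif.l.c.} need not be open in $Y$-topology terms; this is harmless because we only need it connected and of small diameter to force it into one piece of the cover, exactly as in the proof of Lemma~\ref{claim2}. Once the common-tail reduction is in place, the finite-product argument of Theorem~\ref{NewTh} transfers without essential change.
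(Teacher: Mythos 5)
Your reduction collapses at its first step: you claim that the tail products $\prod_{i>n}X_i$ and $\prod_{i>n}Y_i$ are ULC ``by Theorem~\ref{infpr-ULC}'', but that theorem requires, besides each factor being ULC, that \emph{all but finitely many factors be connected} --- and connectedness is not among the hypotheses of Theorem~\ref{pulc}. A concrete counterexample to your claim: take $X_i=Y_i=\{0,1\}$ (a two-point space is ULC); then every tail $\prod_{i>n}X_i$ is the Cantor set, which is totally disconnected and certainly not ULC. In that situation your key step --- producing small connected sets $C_k\subseteq W=\prod_{i>n}X_i$ joining the tail-parts of $x^+_k$ and $x^-_k$ --- is impossible whenever those tail-parts differ, so the ``common tail'' reduction, and with it the appeal to Theorem~\ref{NewTh} on the finite head, never gets off the ground. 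The smallness of the tail diameter (which is correct, given the metric $d(x,y)=\sum_i 2^{-i}d_i(x_i,y_i)$) does not substitute for connectedness. A side remark: your parenthetical ``a dense subset of a ULC space is ULC'' is also false as stated ($\Q$ is dense in $\R$); what Lemma~\ref{dense-super} gives, and what you actually need for the $Y_i$, is the opposite direction, that a space with a dense ULC subspace is ULC.

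The paper's proof avoids any ULC claim about infinite subproducts and in fact never invokes Theorem~\ref{NewTh} or tightness of the individual extensions $X_i\subseteq Y_i$: it works coordinatewise. Given the basic neighbourhood $V=\prod_i V_i$ (with $V_i=X_i$ for $i>k$) of the bad point $f$ and the sequences $g_n\in A$, $h_n\in B$ converging to $f$, one uses the ULC modulus of each single factor $X_i$ (via Lemma~\ref{unif.l.c.}) to build, inductively in $i$, connected open sets $C_i\subseteq V_i\cap X_i$ and indices $n_r$ so that the first $r$ coordinates of $g_n$ and $h_n$ lie in $C_0\times\cdots\times C_r$ for all $n\geq n_r$. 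Since a product of connected sets is connected, one gets a connected ``tube'' $C=\prod_i C_i\subseteq V$, and then a contradiction from connectedness alone: picking $s\in C\cap A$, a basic neighbourhood $W\subseteq U\setminus B\subseteq A$ of $s$ with finite support $m$, and $n\geq n_m$, the connected set $Q=C_0\times\cdots\times C_m\times\{\pi_m''(h_n)\}$ meets both $A$ (through $W$) and $B$ (through $h_n$) yet is disjoint from $A\cap B$, splitting $Q$ into two nonempty relatively closed pieces. If you want to keep a truncation-style argument you would have to find a different way to equalize tails that does not presuppose connected sets in the tail; as it stands, the proposal does not prove the theorem in the stated generality.
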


\begin{proof} For every $m$ let $\pi_m'\colon \prod_i Y_i \to \prod_{i\leq  m}Y_i$ and $\pi_m'':  \prod_i Y_i \to \prod_{i> m}Y_i$ be the projections. 

Let $A,B$ be closed subsets of $\Pi_i X_i$ with $A\cup B = \Pi_i X_i$. Let $\ov A, \ov B$ be the closures of $A,B$ in $\Pi_i Y_i$.
Let $f\in \Pi_i Y_i$ be such that $f\in \ov A \cap \ov B$. We must show that $f\in \overline{A\cap B}$. If this is not the case there
is an open neighborhood $U$ of $f$ with $U\cap A \cap B = \emptyset$. Take a smaller open neighborhood $V\subset U$ at
positive distance $\eps>0$ from the complement of $U$, namely $d(V, \Pi_i Y_i \setminus U) = \eps >0$.

By definition of the product topology we can take $V$ of the form $V=\prod_{i=0}^\infty V_i$, where each $V_i$ is open in $Y_i$ and $V_i=X_i$ for all $i>k$.

Choose $g_n \in A$ with $\lim_n g_n = f$ and $h_n \in B$ with $\lim_n h_n = f$. We can assume that $g_n\in V$ and $h_n \in V$
for every $n$. Let $f_0=\pi_0'(f)$ and  choose $\eps_0>0$ such that $B_{2\eps_0}(f_0)\subseteq V_0$. Then according to  Lemma \ref{unif.l.c.} there exists a positive $\delta_0\leq \eps_0$ such that for every $x \in X_0$ there exists a connected open set $W_x$ in $X_0$ such that $B_{\delta_0}(x)\subseteq W_x \subseteq B_{\epsilon_0}(x)$.
As $\lim_n \pi_0'(g_n)=\lim \pi_0'(h_n)=f_0$, there exists $n_0$ such that $d(\pi_0'(g_n), f_0)<\delta_0$ and  $d(\pi_0'(h_n), f_0)<\delta_0$ for all $n>n_0$.
Hence  $d(\pi_0'(g_n), \pi_0'(h_m))<2\delta_0$ for all $m,n >n_0$. Consequently, $W_{\pi_0'(g_n)}\cap W_{\pi_0'(h_m)}\ne \emptyset$ and $W_{\pi_0'(g_n)}\cup W_{\pi_0'(h_m)}
\subseteq B_{2\eps_0}(f_0)\subseteq V_0\cap X_0$ for all $m,n >n_0$. Therefore, 
$$
C_0=\bigcup_{n >n_0} W_{\pi_0'(g_n)}\cup W_{\pi_0'(h_n)}\subseteq V_0\cap X_0
$$ 
is an open connected set and  $\pi_0'(g_n), \pi_0'(h_n) \in C_0$ for all $n >n_0$. 
Suppose for some $r\in \N$ we have constructed a sequence of natural numbers $n_0\leq \ldots \leq n_r$ and a sequence of  connected open sets $C_i \subseteq X_i\cap V_i$ for each $i\leq r$ (so, $C_0\times \ldots \times C_k \subseteq V_0\times \ldots \times V_r$) such that 
$$
\pi_r'(g_n), \pi_r'(h_n) \in C_0\times \ldots \times C_r \;\;\mbox{ for all }\;\;n\geq n_r.
$$ 
For the inductive step, arguing as above, choose $\eps_{r+1}>0$ such that $B_{2\eps_{r+1}}(f_{r+1})\subseteq V_{r+1}$ and find  
 using Lemma \ref{unif.l.c.} (as $X_{r+1}$ is ULC) a connected open set $C_{r+1} \subseteq V_{r+1}\cap X_{r+1}$ and $n_{r+1}\geq n_r$ such that $\pi_{r+1}'(g_n), \pi_{r+1}'(h_n) \in C_0\times \ldots \times C_{r+1}$  for all  $n\geq n_{r+1}$. Let $C= \prod_i C_i$. Then $C\subseteq V \subseteq U$.  

\medskip 

Let $s\in C$ and without loss of generality suppose that $s\in A$. Then $s\nin B$, so $U\setminus B$ is a neighborhood of
$s$. It then follows by the definition of the product topology that
there is $m \geq k$ and non-empty open sets $W_i \subseteq C_i$ with $s \in W=W_0 \times \ldots \times W_m \times
\Pi_{i>m}X_i \subseteq U\setminus B \subseteq A$. Choose $n\geq n_m$, hence $h_n \in C_0 \times \ldots \times C_m \times
\Pi_{i>m}X_i$. Let $t\in \Pi_i X_i$ with $\pi_m'(t)=\pi_m'(s)$ and $\pi_m''(t)=\pi_m''(h_n)$. The connected set 
$$
Q = C_0\times \ldots \times C_m \times \{\pi_m''(h_n) \} \subseteq \Pi_n X_n
$$ 
meets $B$ as $h_n \in Q$. Since $Q\cap W\ne \emptyset$  and $W \subseteq A$, it follows
that $Q\cap A\ne \emptyset$ as well. But then $Q$ is the disjoint union of
the non-empty relatively closed sets $Q \cap A$ and $Q \cap B$,
contradicting the fact that $Q$ is connected.
\end{proof}

\bigskip 
\noindent{\bf Proof of \ref{PM1}.} Let $X_n $ be ULC and precompact for each $n$, then $\Pi_n X_n$ is straight by Theorem \ref{pulc} and Theorem \ref{str-dense}.  \hfill 
$\Box$
 
\bigskip 

\noindent {\bf Proof of Theorem B.} Item (a) of the theorem was proved in Theorem \ref{infpr-ULC}. 

\medskip

(b) If $X$ is ULC, then  it is also striaght. If each $X_n$ is precompact, then $X$ is straight by Theorem \ref{PM1}. This proves the implication $(b_2)\to (b_1)$. 

\medskip

To prove the implication $(b_1)\to (b_2)$ suppose that $X$ is straight, but not all $X_n$ are precompact. We must show that $X$ is ULC.  According to Proposition \ref{NecessInf} 
either $X$ is ULC, or item (c) of the proposition holds true, i.e.,  all but one of the spaces are both ULC and precompact, and all but finitely many spaces are connected.
By Theorem  \ref{infpr-ULC} the product $X$ is ULC. \hfill 
$\Box$

\bigskip


This completely settles the case of infinite products of ULC spaces. We end up with an example. 

\begin{example}\label{EXA} According to the Corollary from the Introduction, 
 $X^\omega $ is straight for a ULC space $X$ iff $X$ is either 
connected or precompact.   Let $X=R_2 \cup  R_2$, where each $R_i$ is a copy of the reals, each $R_i$ carries the 
usual metric and $d(R_1,R_2)>0$. Then $X$ is ULC and neither precompact
nor connected. Hence $X^\omega$ is not straight. \end{example}

\section{Open questions}

We have described when infinite products of ULC spaces are again ULC or straight (Theorem~\ref{infpr-ULC}). The case of 
precompact spaces is still open, so we start with the following still unsolved

\begin{question}\label{Ques-inf} Let $X$ be a precompact straight space. Is the infinite power $X^\omega$ necessarily straight?
\end{question}

More generally:

\begin{question}\label{Ques-Inf} Let $X_n$ be a precompact straight space  for every $n\in \N$. Is the infinite product $\prod_n X_n$ necessarily straight?
\end{question}

It is easy to see that a positive answer to this question is equivalent to a positive answer to item (b) of the following general question (i.e., the version of Theorem \ref{pulc} for products of {\em precompact} spaces):

\begin{question}\label{ppstr} Let the metric space $Y_i$ be a tight extension of $X_i$  for each $i\in \N$. 
\begin{itemize}
\item[(a)] Is $\Pi_i Y_i$ \AB a tight extension of $\Pi_i X_i$.
\item[(b)]  What about {\em precompact} metric spaces $Y_i$? 
\end{itemize} 
\end{question}

As far as the more general part (a) is concerned we recall the following well known facts that give another motivation for 
the question.  
The class ${\mathcal P}$ of all perfect maps in the category of topological spaces is known to be determined by the property
$$f\in {\mathcal P} \;\;\Longleftrightarrow  \;\;f\times \mbox{id}_Y\in
{\mathcal P}\;\mbox{ for every Hausdorff space }\;Y.\eqno(*)$$ Moreover, 

\begin{itemize}
   \item[(a)] ${\mathcal P}$ is closed under composition \cite[Corollary 3.7.3]{E};
   \item[(b)] ${\mathcal P}$ is closed under arbitrary products (\cite[Theorem 3.7.7]{E}, this is the celebrated Frol\' \i k's theorem);
   \item[(c)]  ${\mathcal P}$  is left and right cancelable (i.e., if $fg\in {\mathcal T}$, then $f\in {\mathcal T}$ and $g\in {\mathcal P}$ \cite[Proposition 3.7.10]{E}). 
\end{itemize}

The class ${\mathcal T}$ of tight embeddings in the category of metric spaces has similar properties. 
Indeed, obviously ${\mathcal T}$ is closed under composition and ${\mathcal T}$ is left and right
cancellable. Moreover, ${\mathcal T}$ is closed under finite products by Theorem \ref{NewTh}.
Using this one can check that $ {\mathcal T}$ has also the property ($*$) for all metric spaces $Y$ (with ${\mathcal P}$ replaced by $ {\mathcal T}$). 
What is not clear is whether the full counterpart of (b) for countably infinite products is available for $ {\mathcal T}$ (this is Question \AB \ref{ppstr} (a);
note that countably infinite products are the limit one should stay in while working with metric spaces).  According to Theorem \ref{pulc} this is true if the domains of the maps are ULC. 
This motivates our hope, that in analogy with the class of perfect maps, also ${\mathcal T}$ is closed under {\em infinite} products, i.e., Question \AB \ref{Ques-Inf} has a positive answer. 

Theorem \ref{str-dense} gives a criterion for straightness of a dense subspace $Y$ of a straight space $X$ in terms of properties of the  embedding $Y\hookrightarrow X$
(namely, when $X$ is a tight extension of $Y$).  The counterpart of this question for {\em closed} subspaces is somewhat unsatisfactory. 
We saw that uniform retracts (Corollary \ref{u.retract}), 
clopen subspaces (Corollary \ref{clopen}), 
as well as direct summands, of straight spaces are always straight (Corollary \ref{Coro1}). 
On the other hand, closed subspaces even of ULC spaces may fail to be straight (see Example \ref{EmbedULC}). 
Another instance when a closed subspace of a straight space fails to be straight is given by the following fact proved in \cite{BDP3}:
 the spaces $X$ in which {\em every} closed subspace is straight are precisely the UC spaces \cite{BDP3}.  Hence every straight space that is not UC has closed non-straight subspaces. This motivates the following general

\begin{problem}\label{QuesProd} Find a sufficient condition ensuring that a closed subspace $Y$ of a straight space $X$ is still straight. 
\end{problem}

\begin{question} Generalize the results on straight spaces from the category of metric spaces to the category of uniform spaces. \end{question}


\end{document}